\newtheorem{Obs}{Remark}[section]
\newtheorem{definition}{Definition}[section]
\newtheorem{teo}{Theorem}[section]
\newtheorem{proposition}{Proposition}[section]
\newtheorem{lema}{Lemma}[section]
\begin{document}

\begin{frontmatter}


\title{Local Exact Controllability to the Trajectories of the Korteweg--de Vries--Burgers 
    Equation on a Bounded Domain with Mixed Boundary Conditions}


\author[label1]{Eduardo Cerpa}
\author[label1]{Cristhian Montoya}
\author[label2]{Bingyu Zhang}

\address[label1]{Departamento de Matem\'atica, Universidad T\'ecnica Federico Santa Mar\'ia, Casilla 110-V, Valpara\'iso, Chile.}
\address[label2]{Department of Mathematical Sciences, University of Cincinnati, Cincinnati, Ohio 45221, USA.}

\begin{abstract}
This paper studies the internal control of the Korteweg--de Vries--Burgers	(KdVB) equation on a bounded domain. The diffusion coefficient is time-dependent and the boundary conditions are mixed in the sense 
that homogeneous Dirichlet and periodic Neumann boundary conditions are considered. The exact controllability to the trajectories is proven for a linearized system by using duality and getting a new Carleman estimate.  
Then, using an inversion theorem we deduce the local exact controllability to the trajectories for the 
original KdVB equation, which is nonlinear.
\end{abstract}

\begin{keyword}
Korteweg--de Vries--Burgers equation, controllability,
	 Carleman estimates.



\end{keyword}

\end{frontmatter}


\section{\normalsize Introduction} 

\quad

	 The Korteweg--de Vries (KdV) equation appears in the nineteenth century with the works of Boussinesq \cite{boussinesq},  Korteweg and de Vries
	\cite{korteweg1895xli}, \cite{miles1981korteweg}. From a physical point of view, the KdV equation represents a model for the motion of long water waves in channels of shallow
	depth, in which two different phenomenon are presents, namely, nonlinear convection and dispersion. 
	This interaction produces a wave traveling at constant speed without losing its sharp, usually called
	\textit{soliton}.
	
	The study of the KdV equation  from a control point of view   began with the work of Russell  \cite{russell1991}
         and Zhang \cite{zhang1990} in late 1980s.     Both exact control problem and stabilization problem have been intensively studied since then.  
	For internal control of the KdV equation on a periodic domain, Russell and Zhang \cite{russell1996exact}  showed that the system is locally exactly controllable and exponentially stabilizable in the space  $H^s (\mathbb{T})$ for any $s\geq 0$. 
	Their work was improved by Laurent, Rosier and Zhang \cite{laurent2010control} who showed that the system is globally exponentially stabilizable and (large time) globally exactly controllable in  $H^s (\mathbb{T})$ for any $s\geq 0$. 
	The study of the boundary controllability for the KdV equation on a bounded domain $(0,L)$
	was started  by Rosier \cite{rosier1997exact} where he  employed  only one control input.   Using  compactness--uniqueness arguments
	and the Hilbert Uniqueness method  he first showed  surprisingly  that the linearized system around the origin is exact controllable  in the space $L^2 (0,L)$ if and only if the  length $L$ of the spatial domain does not belong
	 to a set of critical values. Then assuming the length  $L$ of the spatial domain  is not critical,   he showed the nonlinear system  is locally exactly controllable in the space $L^2 (0,L)$ by using contraction mapping principle.
	If all three boundary controls are employed, Zhang  \cite{zhang1999exact}  using a different  approach  proved that  the system is locally exactly controllable  in $H^s (0,L)$ for $s\geq 0$  without  any restrictions on the spatial domain. When the linearized system 
	is not controllable,  nevertheless, one can still  prove that the nonlinear system is locally exact  controllable  in the space $L^2 (0,L)$ by using power series expansion of the
	 solutions (see \cite{coron-crepeau, cerpa2007exact, cerpa2009boundary}). Other related results can be found in \cite{glass2008some} and \cite{rosier2004control}.
	 Concerning the internal controllability for the KdV equation on a bounded domain with homogeneous Dirichlet boundary conditions,   the most recent
	work  was done by Capistrano--Filho et al.  in 
	\cite{capistrano2015internal}, where the authors obtained  some controllability   results    using an approach  based on Carleman estimates and weighted Sobolev spaces.
	    
	 On the other side, the Burgers equation first appeared in 1940 as a
	 simplified one--dimensional model for the Navier--Stokes system \cite{burgers1940application}. Its controllability
	 properties on bounded domains are certainly different in each case (i.e., distributed controls, boundary control, 
	 initial value  control). For instance, in
	 \cite{horsin1998controllability}, Horsin studies the exact controllability on a bounded domain for the 
	 Burgers equation by means of the return method \cite{coron2007control}. In the case of boundary
	 controllability with partial measurements, the work 
	 \cite{guerrero2007remarks} done by Imanuvilov and Guerrero shows that the exact controllability property does not hold.
	 In the context of distributed controls with Dirichlet and Neumann boundary conditions, the 
	 works by Fernandez--Cara and Guerrero \cite{fernandez2007null} and Marbach \cite{marbach2014small} addressed
	 these problems.

	As consequence of the union of the KdV and Burgers equations arise the Korteweg--de Vries--Burgers equation 
	(KdVB equation), which in our case has homogeneous Dirichlet boundary conditions and periodic Neumann boundary conditions. More 
	precisely, we consider the following system 	
	
	\begin{equation}\label{intro.mainsyst.KdVB}
  		\left\{
   		\begin{array}{llll}
        y_t+y_{xxx}-\nu(t) y_{xx}+yy_x=F(x,t) &\mbox{in} &(0,L)\times(0,T),\\
        y(0,t)=y(L,t)=0  &\mbox{in}& (0,T),\\
         y_x(0,t)=y_x(L,t)  &\mbox{on}& (0,T),\\
        y(\cdot,0)=y_0(\cdot) &\mbox{in}& (0,L),        
    	\end{array}\right.
	\end{equation}
    where $y=y(x,t)$ represents the surface elevation of the water wave at time $(0,T)$ and space $(0,L)$,
    $\nu(t):=\nu_0+\tilde\nu(t)>0,$ with $\nu_0>0$ and $\tilde{\nu}(t)\geq 0$ is the diffusion
    coefficient, 
    $F=F(x,t)$ is an internal force and
    $y_0$ is the initial datum. The system
    (\ref{intro.mainsyst.KdVB}) can be viewed as  a model of propagation of long water waves in channels of
    shallow depth, whose  solutions depend on the nonlinearity, dispersion, and dissipation. Moreover, by introducing a variable coefficient $\nu(t)$, the KdVB equation (\ref{intro.mainsyst.KdVB}) is useful to describe cosmic plasmas phenomena \cite{gao2015variety}, 
	\cite{mamun2002role}.  Respect to the boundary conditions, they appear in order to symmetry the operator. Thus, studying the 	controllability of our system can help to build for instance some feedback laws requiring that the underlying operator is skew-adjoint.
Besides, we can explicitly mention the difficulty appearing with these boundary conditions: the hidden regularity $L^2(0,T;H^1(0,L))$ is not implied by the third order term. That is the reason that the Laplacian is added.    

    From a mathematical point of view, there exist several results for the KdVB equation in both bounded and unbounded
    domains,  concerning the global and local well--posedness problem \cite{cavalcanti2014global}, 
    \cite{li2017well}, \cite{feng2007traveling} and \cite{bona1985travelling}; the optimal control problem
    \cite{boulanger2015sparse}, \cite{chen2017bang}; the internal controllability problem on unbounded domain
	\cite{gallego2018controllability}; and
    the boundary feedback stabilization problem \cite{jia2016boundary}. As far as we know, the internal controllability problem for (\ref{intro.mainsyst.KdVB})
    has not been studied and thus, our paper will fill this gap. 

    Throughout our work, we will use the following notation: let $\omega\subset (0,L)$ be a nonempty open subset
    and let $Q=(0,L)\times (0,T)$, for $T>0$. The main result of this paper is related to the local exact controllability
    to the trajectories of the KdVB equation
	\begin{equation}\label{intro.control.system}
    	\left\{
    	\begin{array}{llll}
        y_t+y_{xxx}-\nu(t) y_{xx}+yy_x=v1_{\omega\times(0,T)} &\mbox{in} &Q,\\
        y(0,t)=y(L,t)=0  &\mbox{on}& (0,T),\\
        y_x(0,t)=y_x(L,t)  &\mbox{on}& (0,T),\\
        y(\cdot,0)=y_0(\cdot) &\mbox{in}& (0,L),        
    	\end{array}\right.
	\end{equation}
    where $v=v(x,t)$ stands for the control, which acts in the  domain $\omega\times(0,T)$.    \vskip 0,3cm

    Let us now introduce the concept of \textit{exact controllability to the trajectories} for the 
    Korteweg--de Vries--Burgers (KdVB) 
    equation. The goal is to reach (in finite time $T$) any point on a given trajectory of the same operator. 
    Let $\overline{y}$ be a solution of the uncontrolled KdVB equation:
	\begin{equation}\label{intro.trajectory.system}
    	\left\{
    	\begin{array}{llll}
        \overline{y}_t+\overline{y}_{xxx}-\nu(t)\overline{y}_{xx}+\overline{y}  \ \overline{y}_x=0
         &\mbox{in} &Q,\\
        \overline{y}(0,t)=\overline{y}(L,t)=0  &\mbox{on}& (0,T),\\
        \overline{y}_x(0,t)=\overline{y}_x(L,t)  &\mbox{on}& (0,T),\\
        \overline{y}(\cdot,0)=\overline{y}_0(\cdot) &\mbox{in}& (0,L).        
    	\end{array}\right.
	\end{equation}
	We  look for a control $v$ such that the solution of (\ref{intro.control.system})
	satisfies:
	\begin{equation}\label{intro.identity.exactlocal}
		y(\cdot, T)=\overline{y}(\cdot,T)\quad \mbox{in}\,\ (0,L).
	\end{equation}
			
	In this paper we  will show that 
	for any given trajectory $\overline{y}$, which is a solution of (\ref{intro.trajectory.system}), there exists 
	 a  $\delta>0$ such that, for any $y_0\in X$ (an appropriate Banach space) satisfying
	\begin{equation}\label{intro.ine.smalldata}
		\|y_0-\overline{y}_0\|_X\leq\delta, 
	\end{equation}
	one can find a control $v$  such that the system  (\ref{intro.control.system})  admits a solution  $y(x,t) $ satisfying 
	(\ref{intro.identity.exactlocal}).
	
	Here  we assume 
	\begin{equation}\label{intro.regularity.trajectory}
		\overline{y}\in C([0,T];H^{s}(0,L))\cap L^2(0,T;H^{s+1}(0,L))	
	\end{equation}
	 for some $s\in [0,3]$.

	To prove the exact controllability to the trajectory, we consider two relevant control systems, namely, the linearized system of
	(\ref{intro.control.system}) around $\overline{y}$ which is
	\begin{equation}\label{intro.linearcontrol.sys}
   		 \left\{
    	\begin{array}{llll}
        y_t+y_{xxx}-\nu(t) y_{xx}+\overline{y}y_x+y\overline{y}_x=f+v1_{\omega\times(0,T)} &\mbox{in} &Q,\\
        y(0,t)=y(L,t)=0  &\mbox{on}& (0,T),\\
        y_x(0,t)=y_x(L,t)  &\mbox{on}& (0,T),\\
        y(\cdot,0)=y_0(\cdot) &\mbox{in}& (0,L)        
    	\end{array}\right.
	\end{equation}	
	 and the adjoint system associated to (\ref{intro.linearcontrol.sys})
	\begin{equation}\label{intro.adjointsys}
    	\left\{
   		 \begin{array}{lll}
        -\varphi_t-\varphi_{xxx}-\nu(t)\varphi_{xx}-\overline{y}\varphi_x=g &\mbox{in} &Q,\\
        \varphi(0,t)=\varphi(L,t)=0  &\mbox{on}& (0,T),\\
        \varphi_x(0,t)=\varphi_x(L,t)  &\mbox{on}& (0,T),\\
        \varphi(\cdot,T)=\varphi_T(\cdot) &\mbox{in}& (0,L).        
    	\end{array}\right.
	\end{equation}
	
	Our strategy is as follows:
	\begin{enumerate}
	\item [i)]  Establish  first a global Carleman inequality for the system (\ref{intro.adjointsys}). More
	 	precisely, we will prove the following Theorem:
		\begin{teo}\label{intro.prop.carleman}
			Let $\nu\in L^\infty(0,T)$ and assume that $\overline{y}$ satisfies
			(\ref{intro.regularity.trajectory}). 
			Then, there exist two
			positive constants $s_0, C$ 
			depending on $L$ and $\omega$ such that, for every $\varphi_T\in L^2(0,L)$ and $g\in L^2(Q)$, the 
			corresponding solution to (\ref{intro.adjointsys}) satisfies:
			\begin{equation}\label{intro.ine.carleman1}
			\begin{aligned}
    		\displaystyle\iint\limits_{Q}[s^5\xi^5|\varphi|^2+&s^3\xi^3|\varphi_x|^2+s\xi|\varphi_{xx}|^2]
        	e^{-4s\hat\alpha}dxdt
        	\\
        	&\hspace{-2cm}\leq C\Bigl(\displaystyle\iint\limits_{Q}|g|^2e^{-2s\hat{\alpha}}dxdt
        	+s^9\displaystyle\iint\limits_{\omega\times(0,T)}\xi^{9}e^{-6s\breve{\alpha}
        	+2s\hat{\alpha}}|\varphi|^2 dxdt\Bigr),
        	\end{aligned}	
			\end{equation}
		 	for every $s\geq s_0$. 
		\end{teo}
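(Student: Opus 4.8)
The plan is to run the standard Carleman machinery for the backward third-order operator $P\varphi := -\varphi_t-\varphi_{xxx}$, treating the dissipation $-\nu(t)\varphi_{xx}$ and the transport term $-\overline{y}\varphi_x$ as lower-order perturbations to be absorbed at the end. First I introduce the conjugated variable $w:=e^{-2s\hat\alpha}\varphi$, chosen so that $|w|^2=e^{-4s\hat\alpha}|\varphi|^2$ reproduces the weight on the left of (\ref{intro.ine.carleman1}), and compute $e^{-2s\hat\alpha}P(e^{2s\hat\alpha}w)$. I split the resulting operator into its formally self-adjoint part $P_1w$ and its skew-adjoint part $P_2w$, so that $P_1w+P_2w=e^{-2s\hat\alpha}g+R$, where $R$ collects the conjugated perturbation terms. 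Squaring and integrating over $Q$ yields
\begin{equation*}
\|P_1w\|_{L^2(Q)}^2+\|P_2w\|_{L^2(Q)}^2+2\,(P_1w,P_2w)_{L^2(Q)}=\|e^{-2s\hat\alpha}g+R\|_{L^2(Q)}^2,
\end{equation*}
and the heart of the argument is to show that the cross term $2(P_1w,P_2w)$ produces, after repeated integration by parts in $x$ and $t$, the positive distributed quantity $\iint_Q(s^5\xi^5|w|^2+s^3\xi^3|w_x|^2+s\xi|w_{xx}|^2)\,dx\,dt$ up to boundary terms and remainders of strictly lower order in $s\xi$.

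I expect the \emph{main obstacle} to be the control of the boundary contributions at $x=0$ and $x=L$ generated by the $x$-integrations by parts, which is exactly where the mixed boundary conditions are used. The third-order operator produces boundary integrals involving traces of $w$, $w_x$ and $w_{xx}$, and only some of these are killed directly by the Dirichlet condition $\varphi(0,t)=\varphi(L,t)=0$ (which also forces $w(0,\cdot)=w(L,\cdot)=0$ and annihilates the tangential derivative of $w$ there). The remaining terms, carrying $w_x$ and $w_{xx}$, must be paired between the two endpoints and cancelled using the periodic Neumann condition $\varphi_x(0,t)=\varphi_x(L,t)$; this requires choosing the weights $\hat\alpha$ and $\xi$ symmetric about the midpoint of $(0,L)$, so that the surviving traces at the two endpoints enter with opposite signs, or at least with a sign favorable to the left-hand side. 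Verifying that every boundary term either vanishes or has the correct sign under precisely these mixed conditions is the delicate point that distinguishes the present estimate from the Dirichlet case treated in \cite{capistrano2015internal}, and is what the symmetrizing role of the added Laplacian term is meant to make possible.

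Once the clean estimate for $P$ is obtained, I absorb the remainder $R$: the term coming from $\nu(t)\varphi_{xx}$ is dominated in $L^2(Q)$ by $s\xi|w_{xx}|^2$ (with constant depending on $\|\nu\|_{L^\infty(0,T)}$), and the transport term $\overline{y}\varphi_x$ is controlled through the regularity hypothesis (\ref{intro.regularity.trajectory}) together with a Sobolev embedding in the space variable (using $H^1(0,L)\hookrightarrow L^\infty(0,L)$ for the low-regularity endpoint $s=0$); both are swallowed by the left-hand side once $s\ge s_0$ is large enough. Using that $\hat\alpha\ge0$, I may also relax the source weight from $e^{-4s\hat\alpha}$ to the weaker $e^{-2s\hat\alpha}$ appearing on the right of (\ref{intro.ine.carleman1}). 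At this stage the global integral is bounded by an observation of all three quantities $s^5\xi^5|w|^2+s^3\xi^3|w_x|^2+s\xi|w_{xx}|^2$ over a subdomain $\omega'$ with $\overline{\omega'}\subset\omega$.

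The final step is the usual localization. I introduce a smooth cut-off $\chi$ supported in $\omega$ and equal to $1$ on $\omega'$, and use the equation (\ref{intro.adjointsys}) itself, integrated by parts against suitable weighted test functions, to trade the derivatives $w_x$ and $w_{xx}$ on the observation region for $|\varphi|^2$ alone; each integration by parts costs extra powers of $s\xi$, which is what raises the exponent to $s^9\xi^9$ on the local term and replaces the weight by $e^{-6s\breve\alpha+2s\hat\alpha}$, with $\breve\alpha$ the weight adapted to $\omega$ that stays bounded there. Returning from $w$ to $\varphi=e^{2s\hat\alpha}w$ restores the weights in the stated form and yields (\ref{intro.ine.carleman1}) for all $s\ge s_0$.
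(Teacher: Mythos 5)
Your overall skeleton (conjugation, splitting into symmetric and antisymmetric parts, positivity of the cross term, boundary terms handled by the mixed boundary conditions together with a symmetric weight, absorption of $\nu(t)\varphi_{xx}$ and $\overline{y}\varphi_x$ as perturbations) is the right one, but there are two points where the proposal, as written, would fail. The first is the conjugation itself: you set $w=e^{-2s\hat\alpha}\varphi$, but $\hat\alpha(t)=\max_{x\in[0,L]}\alpha(x,t)$ is a function of $t$ alone (see \eqref{carl.def.weights}). Conjugating $-\partial_t-\partial_{xxx}$ by a purely time-dependent weight only creates the zeroth-order term $-2s\hat\alpha_t w$; the cross term $(P_1w,P_2w)$ then reduces, after integration by parts, to $\iint_Q s\,\hat\alpha_{tt}|w|^2\,dxdt$ plus boundary terms, and can never generate the signed distributed quantities $s^5\xi^5|w|^2$, $s^3\xi^3|w_x|^2$, $s\xi|w_{xx}|^2$ -- nor any term supported on $\omega$, which in fact never enters your computation. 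The engine of the paper's proof is the genuinely $x$-dependent weight $\alpha(x,t)=\phi(x)\xi(t)$, where $\phi$ satisfies \eqref{carl.weight1.1}--\eqref{carl.weight1.2}: the pseudoconvexity $\phi''<0$ outside $\omega$ is what gives the sign of the terms $I^{3,2}$ and $I^{2,1}_1+I^{2,3}_1$ up to local remainders on $\omega$, and the symmetry $\phi(0)=\phi(L)$, $|\phi'(0)|=|\phi'(L)|$, $\phi'(0)<0<\phi'(L)$ is what makes the boundary traces cooperate with the conditions $\varphi(0,t)=\varphi(L,t)=0$, $\varphi_x(0,t)=\varphi_x(L,t)$ (your intuition on this last point is correct, but it attaches to the spatial profile $\phi$, not to $\hat\alpha$ or $\xi$). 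A further wrinkle you would meet even with the correct weight: the cross terms that should give $s^3\xi^3|w_x|^2$ cancel identically ($I^{2,2}_1+I^{3,1}_1+I^{3,3}=0$ in the paper), so that term is recovered afterwards by interpolating between the $|w|^2$ and $|w_{xx}|^2$ terms, not from the commutator.

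The second, independent gap is the localization step. The ``usual'' cutoff trick you invoke is a second-order phenomenon: there, trading local derivative terms for $|\varphi|^2$ works because using the equation only brings in quantities the Carleman left-hand side already controls. Here the operator is third order: integrating $\iint \chi\, s\xi|w_{xx}|^2$ by parts produces $w_{xxx}$, and substituting the equation replaces it by $w_t$ -- neither is controlled by the left-hand side of \eqref{carl.fullestimate.3}, so the argument collapses. This is exactly why the paper (following \cite{capistrano2015internal} and \cite{guerrero2018local}) proceeds differently: the interpolations $H^1(\omega)=(H^3(\omega),L^2(\omega))_{2/3,2}$ and $H^2(\omega)=(H^3(\omega),L^2(\omega))_{1/3,2}$ together with Young's inequality convert the local $|z_x|^2$, $|z_{xx}|^2$ terms into $s^9\xi^9e^{-6s\breve\alpha+4s\hat\alpha}\|z\|^2_{L^2(\omega)}$ plus small multiples of $\|z\|^2_{H^3(\omega)}$, and these $H^3$ remainders are absorbed by a bootstrap in which the weighted functions $\tilde z=s^{1/2}\xi e^{-s\hat\alpha}z$ and $\hat z=s^{-1/2}\xi^{-1/2}e^{-s\hat\alpha}z$ solve KdVB systems to which the regularity estimate \eqref{ine.regularity.linearized.system} applies; this is where the exponent $9$ and the weight $e^{-6s\breve\alpha+2s\hat\alpha}$ actually come from, not from repeated integrations by parts. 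This bootstrap is also the reason for the preliminary decomposition $\rho_0\varphi=z+\psi$ of \eqref{carl.decomposition}, which you omit entirely: the source $g$, being only $L^2(Q)$, is split off into $\psi$ and handled once and for all by the $L^2\to H^2$ bound \eqref{car.regularityL2.psi}, because it is not regular enough to ride through the $H^3$ smoothing step. Without the $x$-dependent weight, the decomposition, and the interpolation--bootstrap mechanism, the proposal has no valid path to \eqref{intro.ine.carleman1}.
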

		The estimate \eqref{intro.ine.carleman1} allows us to prove a null controllability result for the linear
		system \eqref{intro.linearcontrol.sys} with right--hand side satisfying suitable decreasing properties 
		near $t=T$. Theorem \ref{intro.prop.carleman}  will be proved using the same  approach  as in  
		\cite{guerrero2018local, baudouin2014determination, capistrano2015internal}.
	\item [ii)] Then establish  the local exact controllability to the trajectories for the KdVB equation.
	 	Here, fixed point arguments will be used  to prove Theorem \ref{main_theorem2} given below. 
		\begin{teo}\label{main_theorem2} Let $T>0$ be given, Assume 
 $\overline{y} \in C([0,T]; L^2 (0,L))\cap L^2 (0,T; H^1 (0,L))$  be the solution of  (\ref{intro.trajectory.system}).   Then there exists a $\delta >0$ such that for  $y_0\in L^2(0,L)$
			 satisfying \eqref{intro.ine.smalldata}, 
 			one can find a function control 
 			$v\in L^2(0,T;L^2(\omega))$  such that \eqref{intro.control.system}  admits a solution $y$  satisfies 
 			\begin{equation*}
 			y(\cdot, T)=\overline{y}(\cdot,T)\quad \mbox{in}\,\ (0,L).
 			\end{equation*}    
		\end{teo}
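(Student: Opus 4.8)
The plan is to reduce the exact controllability to the trajectory to a local \emph{null} controllability problem, and then to solve that problem by combining the Carleman estimate of Theorem~\ref{intro.prop.carleman} with an inverse mapping argument. First I would set $z=y-\overline{y}$ and subtract \eqref{intro.trajectory.system} from \eqref{intro.control.system}. Using the algebraic identity $yy_x-\overline{y}\,\overline{y}_x=\overline{y}z_x+z\overline{y}_x+zz_x$, one checks that $z$ solves the linearized system \eqref{intro.linearcontrol.sys} with source $f=-zz_x$, initial datum $z(\cdot,0)=y_0-\overline{y}_0$, and that the target \eqref{intro.identity.exactlocal} becomes $z(\cdot,T)=0$. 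Thus the goal reduces to proving local null controllability of this semilinear system under the smallness assumption \eqref{intro.ine.smalldata}.

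Next I would establish null controllability of the \emph{linear} system. From \eqref{intro.ine.carleman1} applied to the adjoint problem \eqref{intro.adjointsys} one derives a weighted observability inequality, and then a standard duality argument (penalized HUM / Fenchel--Rockafellar on a suitable primal-dual functional) produces, for every $z_0\in L^2(0,L)$ and every source $f$ lying in a weighted space $F$ (roughly, $\iint_Q|f|^2e^{2s\hat{\alpha}}\,dx\,dt<\infty$), a control $v$ and a solution $z$ of \eqref{intro.linearcontrol.sys} with $z(\cdot,T)=0$. The key point is to place the pair $(z,v)$ in weighted Banach spaces $Y\times\mathcal{V}$ whose weights blow up as $t\to T$; membership in $Y$ then \emph{encodes} the vanishing of the state at $T$. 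The exponents in these weights are dictated by \eqref{intro.ine.carleman1}: the factor $s^9\xi^9e^{-6s\breve{\alpha}+2s\hat{\alpha}}$ on the localized term fixes the admissible weight on $v$, while the $e^{-4s\hat{\alpha}}$ factors on the left fix the weights defining $Y$.

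I would then close the argument by a fixed point via the inverse mapping (Liusternik-type) theorem. Define $\mathcal{G}\colon Y\times\mathcal{V}\to F\times L^2(0,L)$ by
\[
\mathcal{G}(z,v)=\bigl(z_t+z_{xxx}-\nu(t)z_{xx}+\overline{y}z_x+z\overline{y}_x+zz_x-v1_{\omega\times(0,T)},\ z(\cdot,0)\bigr).
\]
Its derivative at the origin, $\mathcal{G}'(0,0)(w,h)=(w_t+w_{xxx}-\nu(t)w_{xx}+\overline{y}w_x+w\overline{y}_x-h1_{\omega\times(0,T)},\ w(\cdot,0))$, is exactly the linear control operator shown to be surjective onto $F\times L^2(0,L)$ in the previous step. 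Granting that $\mathcal{G}$ is well defined and of class $C^1$, the inverse mapping theorem yields $\delta>0$ such that $\mathcal{G}(z,v)=(0,y_0-\overline{y}_0)$ is solvable whenever $\|y_0-\overline{y}_0\|_{L^2(0,L)}\le\delta$; the resulting $z$ satisfies $z(\cdot,T)=0$, i.e.\ $y(\cdot,T)=\overline{y}(\cdot,T)$, which is the claim.

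The main obstacle is the \emph{simultaneous} choice of the weighted spaces $F$, $Y$ and $\mathcal{V}$. They must be tight enough that $\mathcal{G}'(0,0)$ is surjective (which relies on the precise exponents of \eqref{intro.ine.carleman1}), yet compatible enough that the only nonlinear term, $zz_x$, satisfies the continuous bilinear estimate $\|z\tilde{z}_x\|_{F}\le C\|z\|_{Y}\|\tilde{z}\|_{Y}$ needed to make $\mathcal{G}$ of class $C^1$. In one space dimension this estimate rests on the embedding $H^1(0,L)\hookrightarrow L^\infty(0,L)$, but reconciling it with the Carleman weights typically forces one to build into $Y$ more regularity than the mere $C([0,T];L^2)\cap L^2(0,T;H^1)$ class, obtained through a parabolic-type smoothing coming from the $-\nu(t)z_{xx}$ term. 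A secondary difficulty is that the trajectory $\overline{y}$ is assumed only in $C([0,T];L^2)\cap L^2(0,T;H^1)$, so handling the linear terms $\overline{y}z_x$ and $z\overline{y}_x$ in the weighted estimates requires care in distributing derivatives and Sobolev embeddings.
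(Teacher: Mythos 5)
Your overall strategy coincides with the paper's: the same change of variables $z=y-\overline{y}$ reducing exact controllability to the trajectory to local null controllability, the same linear step (Carleman estimate plus a Fursikov--Imanuvilov/penalized-duality argument producing a control in weighted spaces), and the same conclusion via the inverse mapping theorem, with surjectivity of the derivative at the origin supplied by the linear result and $C^1$-regularity by a weighted bilinear estimate on $zz_x$ using $H^1(0,L)\hookrightarrow L^\infty(0,L)$.

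There is, however, one genuine gap: you build all of your spaces $F$, $Y$, $\mathcal{V}$ from the original Carleman weight $\hat\alpha$, whose factor $\xi(t)=t^{-2}(T-t)^{-2}$ blows up at \emph{both} endpoints, and you only discuss the blow-up at $t=T$. This is fatal when the initial datum $z(\cdot,0)=y_0-\overline{y}_0$ is nonzero. Concretely: (i) in the duality/Lax--Milgram step, continuity of the linear form $w\mapsto\iint_Q hw\,dx\,dt+\int_0^L z_0\,w(\cdot,0)\,dx$ on the completion $P$ requires an observability estimate of the form $\|w(\cdot,0)\|^2_{L^2(0,L)}\le C\,a(w,w)$, which cannot follow from \eqref{intro.ine.carleman1} because the weight $e^{-4s\hat\alpha}$ vanishes exponentially as $t\to0^+$ and therefore gives no information on $w(\cdot,0)$; (ii) dually, requiring $\iint_Q|f|^2e^{2s\hat\alpha}\,dx\,dt<\infty$ forces the source to vanish exponentially as $t\to0^+$, but $f=-zz_x$ does not vanish near $t=0$ precisely because $z(\cdot,0)\neq0$, so your bilinear estimate $\|z\tilde z_x\|_{F}\le C\|z\|_{Y}\|\tilde z\|_{Y}$ cannot hold unless the weights of $Y$ also blow up at $t=0$ --- which would force $z(\cdot,0)=0$ and make the fixed-point argument vacuous. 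The paper resolves this with Lemma \ref{lema.weight.carleman2}: the weight $\xi$ is replaced by $\tau=\ell^{-2}$, where $\ell\equiv T^2/4$ on $[0,T/4]$ and $\ell(t)=t(T-t)$ on $[T/2,T]$, yielding a Carleman-type inequality whose weights are bounded near $t=0$ and which carries the extra term $\|\varphi(0)\|^2_{L^2(0,L)}$ on the left-hand side (obtained by an energy estimate with a cutoff in time). All the spaces of the paper ($E$ and $\mathcal{B}_2$) are then defined with $\hat\beta,\breve\beta,\tau$ rather than $\hat\alpha,\breve\alpha,\xi$. With that single modification your plan becomes exactly the paper's proof.
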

	\end{enumerate}	 
	
	The paper is organized as follows. In Section \ref{section.wp}, we prove the local well-posedness of the system
    \eqref{intro.mainsyst.KdVB}. In Section \ref{section.carleman}, we establish a Carleman inequality for 
    the adjoint system \eqref{intro.adjointsys}, which is associated to the linearized KdVB equation. 
    In other words, we prove Theorem \ref{intro.prop.carleman}. In section
	\ref{section.null.controllability.linear}, we deal with the null controllability for a linearized system 
	with a right--hand side in $L^2(0,L)$. Finally, in Section \ref{section.local.exact.trajetories}, the proof 
	of Theorem \ref{main_theorem2} is given.
    
\section{\normalsize{Well--posedness}}\label{section.wp}
\subsection{\normalsize{Linear case}}\label{section.wellposedness.l}

	\quad
	In  this subsection we establish   the well--posedness of  the system    
	\begin{equation}\label{wp.main.linear.system}
    \left\{
    \begin{array}{llll}
        y_t+y_{xxx}-\nu(t)y_{xx}+\overline{y}y_x+\overline{y}_xy=f &\mbox{in} &Q,\\
        y(0,t)=y(L,t)=0  &\mbox{on}& (0,T),\\
        y_x(0,t)=y_x(L,t)  &\mbox{on}& (0,T),\\
        y(\cdot,0)=y_0(\cdot) &\mbox{in}& (0,L),        
    \end{array}\right.
	\end{equation} 
	where $\overline{y}$ satisfies \eqref{intro.trajectory.system}. First we consider the following 
	linear problem 
	\begin{equation}\label{wp.main.linear.system.aux}
    \left\{
    \begin{array}{llll}
        y_t+y_{xxx}-\nu_0y_{xx}=f &\mbox{in} &Q,\\
        y(0,t)=y(L,t)=0  &\mbox{on}& (0,T),\\
        y_x(0,t)=y_x(L,t)  &\mbox{on}& (0,T),\\
        y(\cdot,0)=y_0(\cdot) &\mbox{in}& (0,L),        
    \end{array}\right.
	\end{equation} 
	where $\nu_0>0 $ is a  constant.
	
    \noindent 
	\begin{proposition}\label{wp.prop.linear_case}  Let $T>0$ be given. For any $y_0\in L^2 (0,L)$  and 
     $f\in L^1(0,T;L^2(0,L))$, \eqref{wp.main.linear.system.aux} admits  a unique mild solution 
     $y\in C([0,T];L^2(0,L))$ satisfying 
      $$\|y\|_{C([0,T];L^2(0,L))} \leq  C(\|y_0\|_{L^2(0,L)}
     +\|f\|_{L^1(0,T;L^2(0,L))})$$ 
     where $C>0$ is a constant independent of $y_0 $ and $f$.
    	
	\end{proposition}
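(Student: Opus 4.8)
The plan is to establish well-posedness for the linear problem \eqref{wp.main.linear.system.aux} via semigroup theory, exploiting that the boundary conditions were specifically chosen to make the operator skew-adjoint up to the dissipative second-order term.

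Let me set up the operator. Define $A y = -y_{xxx} + \nu_0 y_{xx}$ with domain $D(A) = \{y \in H^3(0,L) : y(0) = y(L) = 0, y_x(0) = y_x(L)\}$. The equation becomes $y_t = Ay + f$. Need to check $A$ generates a $C_0$-semigroup.

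The key computation: for $y \in D(A)$, compute $\langle Ay, y\rangle = \int_0^L (-y_{xxx} + \nu_0 y_{xx}) y \, dx$. Integration by parts on the third-order term: $-\int y_{xxx} y = -[y_{xx}y]_0^L + \int y_{xx} y_x = -[y_{xx}y]_0^L + \frac{1}{2}[y_x^2]_0^L$. The boundary condition $y(0)=y(L)=0$ kills the first bracket, and $y_x(0)=y_x(L)$ makes $[y_x^2]_0^L = 0$. So the dispersive term contributes zero — this is the skew-adjointness. The second-order term gives $\nu_0 \int y_{xx} y = -\nu_0 \int y_x^2 \leq 0$ (the Dirichlet boundary kills the boundary term). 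So $\langle Ay, y\rangle = -\nu_0 \|y_x\|^2 \leq 0$, proving $A$ is dissipative.

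So the strategy, in order: (1) verify $A$ is dissipative on $L^2(0,L)$ via the integration-by-parts computation above; (2) compute the adjoint $A^*$ and show $A^*$ is also dissipative (the formal adjoint is $A^* z = z_{xxx} + \nu_0 z_{xx}$ with the same boundary conditions, and the same integration by parts gives $\langle A^* z, z\rangle = -\nu_0\|z_x\|^2 \leq 0$); (3) conclude by the Lumer–Phillips theorem that $A$ generates a $C_0$-semigroup of contractions $\{S(t)\}_{t\geq 0}$ on $L^2(0,L)$. The contraction property gives $\|S(t)\|_{\mathcal{L}(L^2)} \leq 1$. Then define the mild solution by Duhamel's formula $y(t) = S(t)y_0 + \int_0^t S(t-\tau)f(\tau)\,d\tau$; standard semigroup theory gives $y \in C([0,T];L^2(0,L))$, and the estimate follows directly:
\begin{equation*}
\|y(t)\|_{L^2} \leq \|S(t)y_0\|_{L^2} + \int_0^t \|S(t-\tau)f(\tau)\|_{L^2}\,d\tau \leq \|y_0\|_{L^2} + \int_0^T \|f(\tau)\|_{L^2}\,d\tau,
\end{equation*}
taking the supremum over $t\in[0,T]$ yields the claim with $C=1$ (or any $C\geq 1$). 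Uniqueness is immediate from the semigroup formulation, since any two mild solutions have a difference solving the homogeneous equation with zero data.

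I expect the main obstacle to be the rigorous verification that $A$ is densely defined with the resolvent condition needed for Lumer–Phillips, i.e. that $\mathrm{Range}(\lambda I - A) = L^2(0,L)$ for some $\lambda > 0$; equivalently one must solve the boundary-value problem $\lambda y + y_{xxx} - \nu_0 y_{xx} = h$ with the mixed boundary conditions and show it is well-posed in $H^3$. This is a linear third-order ODE system with the nonstandard periodic-Neumann/Dirichlet coupling, so one must check the associated determinant is nonzero; the dissipativity estimate combined with the a priori bound should guarantee injectivity, and a Lax–Milgram or Fredholm argument on the appropriate variational formulation closes surjectivity. Once the semigroup is in hand, everything else is routine Duhamel estimation.
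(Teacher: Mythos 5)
Your proposal follows essentially the same route as the paper: define $A=-\partial_x^3+\nu_0\partial_x^2$ on the domain encoding the Dirichlet and periodic-Neumann conditions, verify by integration by parts that $\langle A\varphi,\varphi\rangle=-\nu_0\|\varphi_x\|_{L^2}^2\leq 0$ and that $A^*$ (same form up to sign of the dispersive term, same boundary conditions) is likewise dissipative, invoke the Lumer--Phillips corollary (Pazy, Corollary 4.4) to obtain a contraction semigroup, and conclude via Duhamel's formula. Note that once you use the variant of Lumer--Phillips based on dissipativity of both $A$ and $A^*$ (for a closed, densely defined operator), the range condition you flag as the "main obstacle" is not needed, which is exactly why the paper's proof is so short.
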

	\begin{proof}  Consider the operator 
    	$A:=-\partial^3_{x}+\nu_0\partial_{x}^2$ defined on 
    	$$\mathcal{D}(A):=\{u\in H^3(0,L)\cap H^1_0(0,L): u(0)=u(L)=0,\,\, u_x(0)=u_x(L)\}\subset L^2(0,L).
    	$$
    	  For any 
     $\varphi\in \mathcal{D}(A)$,
     $$\langle A\varphi, \varphi\rangle_{L^2(0,L)}=-\displaystyle\int\limits_{0}^L\varphi_{xxx}\varphi\, dx
     +\nu_0\displaystyle\int\limits_{0}^L\varphi_{xx}\varphi\, dx
     =-\nu_0\displaystyle\int\limits_{0}^L|\varphi_x|^2\, dx\leq 0.$$
    Thus $A $ is dissipative.  Similarly, one can verify that $A^*$ is  also dissipative. Thus, the operator $A$ generates a 
     strongly
     semigroup $\{S(t)\}_{t\geq 0}$ of contractions in $L^2(0,L)$ by   the Lumer--Phillips 
     Theorem ( see \cite{pazy2012semigroups}, Corollary 4.4, page 15). Hence, for any $y_0\in L^2(0,L)$, $T>0$ and 
     $f\in L^1(0,T;L^2(0,L))$, \eqref{wp.main.linear.system.aux} admits  a unique mild solution 
     $y\in C([0,T];L^2(0,L))$, 
     given by the formula 
     \begin{equation}\label{wp.formula.semigroup}
         y(t)=S(t)y_0+\displaystyle\int\limits_{0}^t S(t-s)f(s)ds,\quad \forall t\in [0,T]
     \end{equation}
     and depending continuously on the data, i.e., 
     $$\|y\|_{C([0,T];L^2(0,L))}:=\sup\limits_{t\in [0,T]}\|y\|_{L^2(0,L)}\leq (\|y_0\|_{L^2(0,L)}
     +\|f\|_{L^1(0,T;L^2(0,L))}).$$ 
     This completes the proof of Proposition \ref{wp.prop.linear_case}.
     \end{proof}     
\begin{Obs}
    Observe that if the initial data $y_0$ belongs to $\mathcal{D}(A)$ and $f\in C^1([0,T];L^2(0,L))$ or 
    $f\in L^1(0,T;\mathcal{D}(A))\cap C([0,T];L^2(0,L))$, the system \eqref{wp.main.linear.system.aux} admits
    a unique classical solution, in other words, $y$ belongs to
    $$C([0,T];L^2(0,L))\cap C^1((0,T];L^2(0,L))\cap C((0,T];\mathcal{D}(A)),$$
    which can be expressed as \eqref{wp.formula.semigroup}. The reader interested can see 
    [\cite{pazy2012semigroups}, Corollary 2.2, page 106] for more details.  
\end{Obs}
 
    	The following lemma  reveals  a global Kato smoothing property of the mild solutions of 
    \eqref{wp.main.linear.system.aux}. 
   
   \begin{lema}\label{wp.lemma.katoproperty1}
    For every $T>0$,  $f\in L^1(0,T;L^2(0,L))$ and $y_0\in L^2 (0,L)$, the corresponding mild solution of 
    \eqref{wp.main.linear.system.aux} belongs to $C([0,T];L^2(0,L))\cap L^2(0,T;H^1(0,L))$ and satisfies
    \begin{equation*}\label{wp.estimate.kato1}
        \|y\|_{L^{\infty}(0,T;L^2(0,L))}
        +\|y\|_{L^2(0,T;H^1(0,L))}\leq C(\|y_0\|_{L^2(0,L)}+\|f\|_{L^1(0,T;L^2(0,L))}),
    \end{equation*}
    for some positive constant $C$ dependent of $\nu_0$. 
     Furthermore, the term $y y_x$ belongs to 
    $L^1(0,T;L^2(0,L))$ and it satisfies the estimate 
    \begin{equation*}\label{wp.estimate.kato1.2} 
        \|yy_x\|_{L^1(0,T;L^2(0,L))}\leq C\|y\|^2_{L^2(0,T;H^1(0,L))},
    \end{equation*}
    for some constant $C>0$ dependent of $\nu_0$.
\end{lema}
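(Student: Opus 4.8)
The plan is to obtain the regularity and both estimates from a single energy identity, exploiting the fact that the mixed boundary conditions make the third-order term conservative while the Laplacian $-\nu_0\partial_x^2$ supplies the gain of one spatial derivative. Since the mild solution of \eqref{wp.main.linear.system.aux} need not be smooth, I would first carry out the computation for regular data, say $y_0\in\mathcal{D}(A)$ and $f$ smooth, so that by the Remark following Proposition \ref{wp.prop.linear_case} the solution is classical, and afterwards recover the general case by density.

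For such a classical solution, multiply the equation by $y$ and integrate over $(0,L)$:
\begin{equation*}
\frac12\frac{d}{dt}\int_0^L y^2\,dx+\int_0^L y_{xxx}\,y\,dx-\nu_0\int_0^L y_{xx}\,y\,dx=\int_0^L f\,y\,dx.
\end{equation*}
Two integrations by parts give $\int_0^L y_{xxx}\,y\,dx=[y_{xx}y]_0^L-\tfrac12[y_x^2]_0^L$, and both bracketed terms vanish: the first because $y(0,t)=y(L,t)=0$, the second because $y_x(0,t)=y_x(L,t)$, so the dispersive term is conservative. For the parabolic term, $-\nu_0\int_0^L y_{xx}\,y\,dx=-\nu_0[y_xy]_0^L+\nu_0\int_0^L y_x^2\,dx=\nu_0\|y_x\|_{L^2(0,L)}^2$, again using the Dirichlet condition. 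This yields the energy identity
\begin{equation*}
\frac12\frac{d}{dt}\|y\|_{L^2(0,L)}^2+\nu_0\|y_x\|_{L^2(0,L)}^2=\int_0^L f\,y\,dx.
\end{equation*}

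Integrating in time, bounding $\int_0^t\!\int_0^L f\,y$ by $\|y\|_{L^\infty(0,t;L^2(0,L))}\|f\|_{L^1(0,t;L^2(0,L))}$, and solving the resulting quadratic inequality for $M:=\|y\|_{L^\infty(0,T;L^2(0,L))}$, I get $M\le\|y_0\|_{L^2(0,L)}+2\|f\|_{L^1(0,T;L^2(0,L))}$; feeding this back into the time-integrated identity bounds $\nu_0\int_0^T\|y_x\|_{L^2(0,L)}^2\,dt$, and Poincar\'e's inequality (valid since $y(0,t)=y(L,t)=0$) upgrades the control of $\|y_x\|_{L^2(0,L)}$ into full $H^1$ control. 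This is exactly the first stated estimate for regular data. To pass to the general case, approximate $(y_0,f)$ by $(y_0^n,f^n)$ with $y_0^n\in\mathcal{D}(A)$ and $f^n$ smooth, converging in $L^2(0,L)$ and $L^1(0,T;L^2(0,L))$ respectively; the estimate is uniform in $n$, so $\{y^n\}$ is bounded in $L^2(0,T;H^1(0,L))$, while Proposition \ref{wp.prop.linear_case} gives $y^n\to y$ in $C([0,T];L^2(0,L))$. Extracting a subsequence converging weakly in $L^2(0,T;H^1(0,L))$, whose limit must coincide with $y$, and invoking weak lower semicontinuity of the norm, I conclude $y\in C([0,T];L^2(0,L))\cap L^2(0,T;H^1(0,L))$ with the claimed bound.

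For the final estimate, I would work pointwise in $t$ and use the one-dimensional Sobolev embedding $H^1(0,L)\hookrightarrow L^\infty(0,L)$:
\begin{equation*}
\|y(t)y_x(t)\|_{L^2(0,L)}\le\|y(t)\|_{L^\infty(0,L)}\|y_x(t)\|_{L^2(0,L)}\le C\|y(t)\|_{H^1(0,L)}^2,
\end{equation*}
and integrate over $(0,T)$ to get $\|yy_x\|_{L^1(0,T;L^2(0,L))}\le C\|y\|_{L^2(0,T;H^1(0,L))}^2$, finite by the first part. The main technical point is the density/regularization argument that legitimizes the formal energy identity for merely $L^2$ initial data and $L^1(0,T;L^2)$ forcing; the boundary-term cancellation, although elementary, is the structural reason the scheme works and is precisely where the mixed Dirichlet--periodic-Neumann conditions enter.
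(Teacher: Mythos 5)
Your proposal is correct, and it is essentially the argument the paper has in mind: the paper omits this proof entirely, deferring to \cite{jia2016boundary}, whose method is precisely the energy estimate you carry out, and indeed the identity $\frac{d}{dt}\int_0^L y^2\,dx + 2\nu_0\int_0^L y_x^2\,dx = 2\int_0^L fy\,dx$ you derive is the same one the paper itself invokes later in the proof of Proposition \ref{wp-full.linear.s}. Your boundary-term cancellation, the quadratic-inequality bound on $\|y\|_{L^\infty(0,T;L^2(0,L))}$, the density/weak-limit passage from classical to mild solutions, and the $H^1(0,L)\hookrightarrow L^\infty(0,L)$ estimate for $yy_x$ are all sound, so your write-up supplies in full the details the paper leaves to the reference.
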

\begin{proof} The proof  follows the same ideas  in \cite{jia2016boundary}, 
	it is  therefore omitted here.     
	\end{proof}
       
     \noindent Now we recall  three additional Lemmas on sharp Kato smoothing property of the linear KdVB systems.

     The first  one is  for the linear KdVB equation posed on the whole line $\mathbb{R}$.
     \begin{equation}\label{app.ivp1}
    \left\{
    \begin{array}{ll}
        w_t+w_{xxx}-\nu_0 w_{xx}=0,&\quad x\in \mathbb{R},\, t\in (0,+\infty),\\
        w(x, 0)=w_0(x),&\quad x\in \mathbb{R}.
    \end{array}
    \right.
\end{equation}
   
\begin{lema}\label{app.lemma.1}
    For a   given $0\leq s\leq 3$ and $w_0\in H^s(\mathbb{R})$,  the solution of problem \eqref{app.ivp1} satisfies
    \begin{equation}\label{app.lema1.inequality}
        \sup\limits_{x\in\mathbb{R}}\Big( \|w(x,\cdot)\|_{H^{\frac{s+1}{3}}(0,+\infty)}
        +\|w_x(x,\cdot)\|_{H^{\frac{s}{3}}(0,+\infty)}\Bigr)\leq C\|w_0\|_{H^s(\mathbb{R})},
    \end{equation}
    for some positive constant $C$.
\end{lema}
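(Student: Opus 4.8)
The plan is to solve the initial value problem \eqref{app.ivp1} explicitly by taking the Fourier transform in the spatial variable and then to read off the temporal regularity of the trace $w(x,\cdot)$ at each fixed $x$ from a change of variables in frequency. Writing $\hat{w}(\xi,t)=\int_{\mathbb{R}}w(x,t)e^{-i\xi x}\,dx$, the equation decouples into the family of ODEs $\hat{w}_t=(i\xi^3-\nu_0\xi^2)\hat{w}$, so that
$$\hat{w}(\xi,t)=e^{(i\xi^3-\nu_0\xi^2)t}\,\hat{w}_0(\xi),\qquad w(x,t)=\frac{1}{2\pi}\int_{\mathbb{R}}e^{ix\xi}\,e^{i\xi^3 t}\,e^{-\nu_0\xi^2 t}\,\hat{w}_0(\xi)\,d\xi.$$
The dispersive factor $e^{i\xi^3 t}$ carries the oscillation, while the Burgers factor $e^{-\nu_0\xi^2 t}$ is a dissipation that will only help. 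The spatial variable enters solely through the unimodular phase $e^{ix\xi}$, which is precisely why one can hope for a bound uniform in $x$.

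Next I would identify the temporal regularity. Extending $w(x,\cdot)$ by zero to $t<0$ and taking the Fourier transform in $t$ gives, for fixed $x$,
$$\widetilde{w}(x,\tau)=\frac{1}{2\pi}\int_{\mathbb{R}}e^{ix\xi}\,\frac{\hat{w}_0(\xi)}{\nu_0\xi^2+i(\tau-\xi^3)}\,d\xi,$$
so that the temporal frequency $\tau$ resonates with the spatial frequency through $\tau=\xi^3$. This is the heart of the matter: on the resonance set one has $\langle\tau\rangle\sim\langle\xi\rangle^3$, whence the temporal weights convert as $\langle\tau\rangle^{2(s+1)/3}\sim\langle\xi\rangle^{2(s+1)}$ for $w$, and $\langle\tau\rangle^{2s/3}\sim\langle\xi\rangle^{2s}$ for $w_x$. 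The change of variables $\tau=\xi^3$ (a bijection of $\mathbb{R}$) produces $d\tau=3\xi^2\,d\xi$, i.e. a factor $\xi^{-2}$ upon returning to the $\xi$-integral. Combining these factors, the $H^{(s+1)/3}$-norm of $w(x,\cdot)$ is, at high frequencies, governed by $\int\langle\xi\rangle^{2(s+1)}\xi^{-2}|\hat{w}_0(\xi)|^2\,d\xi\sim\|w_0\|_{H^s(\mathbb{R})}^2$, while for $w_x$ the extra factor $|i\xi|^2=\xi^2$ exactly cancels the Jacobian and again yields $\langle\xi\rangle^{2s}|\hat{w}_0|^2$. Thus both exponents $\tfrac{s+1}{3}$ and $\tfrac{s}{3}$, and the right-hand side $\|w_0\|_{H^s}$, are forced by this bookkeeping.

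The step I expect to be the main obstacle is the rigorous estimation of the oscillatory integral defining $\widetilde{w}(x,\tau)$ beyond the clean resonance heuristic, together with the low-frequency region. Two difficulties arise. First, the map $\tau=\xi^3$ introduces the singular factor $\xi^{-2}$ near $\xi=0$, which is non-integrable for KdV alone; here the dissipation $e^{-\nu_0\xi^2 t}$ (equivalently, the lower bound on $|\nu_0\xi^2+i(\tau-\xi^3)|$) is exactly what tames the low-frequency contribution and keeps the estimate finite, which is why the constant $C$ must depend on $\nu_0$. Second, one must bound the non-resonant part, where $\tau$ is far from $\xi^3$ and the denominator is large; I would split the $\xi$-integral into the resonance band $|\tau-\xi^3|\lesssim\nu_0\xi^2$ and its complement, estimating each by Plancherel together with a Schur/Young-type bound, uniformly in $x$ since the phase is unimodular. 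The half-line (rather than full-line) nature of the $H^\theta(0,+\infty)$-norms is the source of the Laplace-type denominator above, and it is handled by the standard extension/restriction argument for fractional Sobolev spaces. This is exactly the line of reasoning developed for the linear KdV and KdV--Burgers groups in the works cited in the paper, so I would follow that route and track the dependence of the constants on $\nu_0$ and $s$.
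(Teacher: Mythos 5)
The paper never proves this lemma --- it is ``recalled'' from the sharp Kato smoothing literature (cf.\ \cite{jia2016boundary}) without proof --- so your proposal has to stand on its own, and although your overall route (spatial Fourier representation, temporal Fourier analysis of the trace, resonance bookkeeping along $\tau=\xi^3$) is indeed the classical one, the two steps you dismiss as routine are exactly where it breaks. First, your formula for $\widetilde{w}(x,\tau)$ is the Fourier transform of the extension of $w(x,\cdot)$ by \emph{zero} to $t<0$. That extension has a jump of size $w_0(x)$ at $t=0$, so integration by parts in $t$ gives $\widetilde{w}(x,\tau)=w_0(x)/(i\tau)+o(|\tau|^{-1})$ as $|\tau|\to\infty$, and hence
\begin{equation*}
\int_{\mathbb{R}}\langle\tau\rangle^{2\theta}|\widetilde{w}(x,\tau)|^2\,d\tau=\infty
\qquad\text{whenever } \theta\ge \tfrac12 \text{ and } w_0(x)\neq 0 .
\end{equation*}
Since $\theta=\frac{s+1}{3}\ge\frac12$ for all $s\ge\frac12$, your bound is vacuous on most of the claimed range $s\in[0,3]$: extension by zero is bounded from $H^\theta(0,+\infty)$ to $H^\theta(\mathbb{R})$ only for $\theta<\frac12$, which is precisely the ``standard extension/restriction argument'' you invoke failing. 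The repair is to use an extension that is continuous across $t=0$, e.g.\ $W(x,t)=\frac{1}{2\pi}\int e^{ix\xi}e^{i\xi^3t}e^{-\nu_0\xi^2|t|}\hat{w}_0(\xi)\,d\xi$, whose temporal transform replaces your Laplace-type denominator by the Lorentzian $2\nu_0\xi^2\bigl((\nu_0\xi^2)^2+(\tau-\xi^3)^2\bigr)^{-1}$, an approximate identity centred at the resonance; alternatively, prove the integer-order cases and interpolate.

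Second, and worse, your claim that the dissipation ``tames the low-frequency contribution and keeps the estimate finite'' is wrong, and in fact no argument can close on the time interval $(0,+\infty)$: the Burgers factor produces parabolic decay $t^{-1/2}$, which is not square integrable at infinity. Concretely, take $\hat{w}_0=1_{[-1,1]}$, so that $w_0\in H^s(\mathbb{R})$ for every $s$; the substitution $\xi=ut^{-1/2}$ and dominated convergence give, for each fixed $x$,
\begin{equation*}
\sqrt{t}\,w(x,t)=\frac{1}{2\pi}\int_{|u|\le\sqrt{t}}e^{ixut^{-1/2}}e^{iu^3t^{-1/2}}e^{-\nu_0u^2}\hat{w}_0\bigl(ut^{-1/2}\bigr)\,du
\;\longrightarrow\;\frac{\hat{w}_0(0)}{2\sqrt{\pi\nu_0}},
\end{equation*}
so $w(x,\cdot)\notin L^2(0,+\infty)$, a fortiori $w(x,\cdot)\notin H^{\frac{s+1}{3}}(0,+\infty)$, while $\|w_0\|_{H^s(\mathbb{R})}<\infty$. (In your frequency picture this is the divergence $\widetilde{w}(x,\tau)\approx \hat{w}_0(0)\int\frac{d\xi}{\nu_0\xi^2+i\tau}\sim c\,\hat{w}_0(0)\,\nu_0^{-1/2}|\tau|^{-1/2}$ near $\tau=0$, whose square is not integrable there.) Only the $w_x$-trace, where the extra factor $i\xi$ removes the low-frequency mass, can hold on an infinite horizon. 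So the inequality can only be established in the form in which it is actually used in the rest of the paper and stated in the sources it is recalled from: on a bounded interval $(0,T)$, with $C=C(T,\nu_0)$. There the low-frequency piece is controlled trivially (its $L^\infty_t$ norm and those of its time derivatives are bounded by $\|\hat{w}_0\|_{L^1(|\xi|\le1)}\le\sqrt{2}\,\|w_0\|_{L^2(\mathbb{R})}$, up to bounded factors $\xi^2,\xi^3$), and your high-frequency resonance computation, run with the corrected extension above, does yield the estimate.
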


The second one is  for solutions of   system  \eqref{wp.main.linear.system.aux}.  

\begin{lema}\label{wp.remark.aux1}
     For given  $y_0\in L^2(0,L)$ and $f\equiv 0$, 
    the unique solution $y$ of \eqref{wp.main.linear.system.aux}  
    belongs to $L^{\infty}(0,L;H^{\frac{1}{3}}(0,T))$  with  $y_x\in L^\infty(0,T;L^2(0,L))$ satisfying

    \begin{equation}\label{wp.katoproperty2}
        \sup\limits_{x\in [0,L]}\Bigl(\|y(x,\cdot)\|_{H^{\frac{1}{3}}(0,T)}+\|y_x(x,\cdot)\|_{L^2(0,T)}\Bigr)
        \leq C\|y_0\|_{L^2(0,L)},
    \end{equation}
    where $C$ is a positive constant.
\end{lema}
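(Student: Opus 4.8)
The plan is to reduce the estimate on the bounded domain to its whole–line counterpart, Lemma \ref{app.lemma.1}, evaluated at the endpoint index $s=0$: there $\frac{s+1}{3}=\frac13$ and $\frac{s}{3}=0$, so \eqref{app.lema1.inequality} reads precisely as $\sup_{x}\bigl(\|w(x,\cdot)\|_{H^{1/3}}+\|w_x(x,\cdot)\|_{L^2}\bigr)\le C\|w_0\|_{L^2}$, which is the exact profile of \eqref{wp.katoproperty2}. This comparison is clean because \eqref{wp.main.linear.system.aux} (with $f\equiv0$) and \eqref{app.ivp1} share the same constant–coefficient operator $\partial_t+\partial_x^3-\nu_0\partial_x^2$, with no lower–order terms to obstruct it. I would first assume $y_0\in\mathcal{D}(A)$, so that by the Remark following Proposition \ref{wp.prop.linear_case} the solution $y$ is classical and the multiplier identities below are legitimate; the case $y_0\in L^2(0,L)$ then follows by density, using the continuity furnished by Proposition \ref{wp.prop.linear_case} and Lemma \ref{wp.lemma.katoproperty1}.

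Next I would fix a bounded extension $E:L^2(0,L)\to L^2(\mathbb{R})$ with $\|Ey_0\|_{L^2(\mathbb{R})}\le C\|y_0\|_{L^2(0,L)}$ and let $w$ solve \eqref{app.ivp1} with datum $w_0:=Ey_0$. Since restricting the time interval from $(0,+\infty)$ to $(0,T)$ only decreases norms, Lemma \ref{app.lemma.1} at $s=0$ gives
\[
\sup_{x\in[0,L]}\Bigl(\|w(x,\cdot)\|_{H^{1/3}(0,T)}+\|w_x(x,\cdot)\|_{L^2(0,T)}\Bigr)\le C\|y_0\|_{L^2(0,L)}.
\]
Setting $z:=y-w|_{(0,L)}$, the difference solves the same homogeneous interior equation with $z(\cdot,0)=0$ and the nonhomogeneous boundary data $z(0,t)=-w(0,t)$, $z(L,t)=-w(L,t)$ and $z_x(0,t)-z_x(L,t)=-\bigl(w_x(0,t)-w_x(L,t)\bigr)$, whose Dirichlet parts lie in $H^{1/3}(0,T)$ and whose periodic–Neumann mismatch lies in $L^2(0,T)$, all controlled by $\|y_0\|_{L^2(0,L)}$. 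Once $\sup_x\|z(x,\cdot)\|_{H^{1/3}(0,T)}$ and $\sup_x\|z_x(x,\cdot)\|_{L^2(0,T)}$ are bounded by these data norms, \eqref{wp.katoproperty2} follows from $y=w+z$ and the triangle inequality.

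To estimate $z$ I would integrate $z_t+z_{xxx}-\nu_0z_{xx}=0$ over $(0,L)\times(0,T)$ against the multipliers $z$, $xz$ and $(L-x)z$; the local smoothing structure then yields interior bounds of the form $\|z_x\|_{L^2(Q)}$ together with the boundary fluxes, the new boundary contributions being absorbed into the controlled traces of $w$ after Cauchy--Schwarz (and a trace interpolation for the $H^{1/3}$ Dirichlet terms). To pass from integrated bounds to pointwise–in–$x$ quantities I would use that $g(x):=\int_0^T z_x(x,t)^2\,dt$ satisfies $|g(x)-g(x')|\le 2\|z_x\|_{L^2(Q)}\|z_{xx}\|_{L^2(Q)}$ and $\int_0^L g\,dx=\|z_x\|_{L^2(Q)}^2$, so that $g\in W^{1,1}(0,L)\hookrightarrow C([0,L])$ controls $\sup_x g(x)$ by interior norms; an analogous device applied to the Gagliardo seminorm in $t$, as a function of $x$, handles the $H^{1/3}$ trace of $z$.

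The main obstacle is precisely the input this last step requires: the sharp, low–regularity trace estimate for the boundary–forced problem, which demands control of $z$ in $L^2(0,T;H^2(0,L))$ that is \emph{not} free from $L^2$ data through energy methods alone. The naive remedy of lifting the boundary data into an interior source fails, since a Dirichlet datum of regularity only $H^{1/3}$ cannot be lifted without losing time derivatives, so the requisite gain must again come from the dispersive--parabolic smoothing. This is exactly the content of the nonhomogeneous boundary value theory for KdV–type operators (in the Bona--Sun--Zhang framework), adapted to the mixed Dirichlet/periodic–Neumann conditions, where the coupling $z_x(0,\cdot)=z_x(L,\cdot)$ must be carried consistently through every integration by parts. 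I would therefore expect the genuinely delicate point to be establishing the uniform–in–$x$ trace bounds for $z$ from boundary data of this low regularity, the remaining steps being routine once the smooth–data computations are justified and passed to the limit by density.
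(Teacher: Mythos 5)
Your reduction is strategically reasonable, and it is worth noting at the outset that the paper itself offers no proof of this lemma: it is one of three results explicitly ``recalled'' as known sharp Kato smoothing properties of the linear KdVB system, so there is no internal argument to compare against. Your use of Lemma \ref{app.lemma.1} at $s=0$ (extension of $y_0$ by zero, restriction of the time interval) is correct and does give the stated bound for the whole-line piece $w$; the decomposition $z=y-w|_{(0,L)}$ and the identification of the boundary-data regularity ($H^{1/3}(0,T)$ Dirichlet traces, $L^2(0,T)$ Neumann mismatch, all controlled by $\|y_0\|_{L^2(0,L)}$) are likewise correct.

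As a proof, however, the proposal has a genuine gap, which you yourself flag: everything hinges on sharp, uniform-in-$x$ trace estimates for the boundary-forced problem satisfied by $z$, and this is never established. It is not a routine remainder; it is essentially equivalent in difficulty to the lemma itself. Your multiplier computations combined with the $W^{1,1}(0,L)\hookrightarrow C([0,L])$ device cannot close it, because the bound $|g(x)-g(x')|\le 2\|z_x\|_{L^2(Q)}\|z_{xx}\|_{L^2(Q)}$ requires $z\in L^2(0,T;H^2(0,L))$, one derivative more than the $L^2(0,T;H^1(0,L))$ regularity that boundary data at the $H^{1/3}$ level (equivalently, $L^2$ initial data) can supply --- a limitation you concede. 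Deferring to ``the Bona--Sun--Zhang framework'' does not fill the gap either: that theory constructs boundary integral operators for Dirichlet and Neumann data prescribed at separate endpoints, whereas here the third condition is the nonlocal coupling $z_x(0,t)=z_x(L,t)$, so the solution formula, the invertibility of the associated boundary operator, and the oscillatory-integral estimates behind the uniform-in-$x$ traces would all have to be redone for this mixed Dirichlet/periodic--Neumann configuration. In short, your argument correctly reduces the lemma to the hard estimate but does not prove the hard estimate; the paper, for its part, states the lemma without any proof, so neither text contains a complete argument.
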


The third one is for solutions of the following linear problem
\begin{equation}\label{wp.linearproblem.datanull}
    \left\{
    \begin{array}{llll}
        y_t+y_{xxx}-\nu_0 y_{xx}=f &\mbox{in} & (0,L)\times (0,+\infty),\\
        y(0,t)=y(L,t)=0  &\mbox{on}& (0,T),\\
        y_x(0,t)=y_x(L,t)  &\mbox{on}& (0,T),\\
        y(\cdot,0)=0 &\mbox{in}& (0,L).        
    \end{array}\right.
\end{equation} 
    
\begin{lema}\label{app.lema.2} 
    For any $T>0$ and $f\in L^1(0,T;L^2(0,L))$, there exists a positive constant $C$ such that 
    the solution $y(x,t)$ of \eqref{wp.linearproblem.datanull}
    satisfies
    \begin{equation*}\label{app.lema2.inequality}
         \sup\limits_{x\in [0,L]}\Bigl(\|y(x,\cdot)\|_{H^{\frac{1}{3}}(0,T)}+\|y_x(x,\cdot)\|_{L^2(0,T)}\Bigr)
        \leq C\int\limits_0^T\|f(\cdot, s)\|_{L^2(0,L)}\,ds.   
    \end{equation*}
\end{lema}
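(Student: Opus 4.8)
The plan is to reduce the inhomogeneous problem \eqref{wp.linearproblem.datanull} to the homogeneous one already handled in Lemma \ref{wp.remark.aux1} by means of Duhamel's formula, and then to transfer the trace estimate from each homogeneous building block to the full solution via Minkowski's integral inequality. Since the initial datum of \eqref{wp.linearproblem.datanull} vanishes, the mild solution furnished by Proposition \ref{wp.prop.linear_case} is
\begin{equation*}
y(\cdot,t)=\int_0^t S(t-s)f(\cdot,s)\,ds,\qquad t\in[0,T],
\end{equation*}
where $\{S(t)\}_{t\ge0}$ is the semigroup generated by $A=-\partial_x^3+\nu_0\partial_x^2$.

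First I would fix $s\in(0,T)$ and set $w^{s}(\cdot,t):=S(t-s)f(\cdot,s)$ for $t\ge s$, extended by $0$ for $t<s$. For each such $s$, the map $(x,t)\mapsto w^{s}(x,t)$ is, on the time interval $(s,+\infty)$, exactly the solution of the homogeneous system \eqref{wp.main.linear.system.aux} with $f\equiv0$ and initial datum $f(\cdot,s)\in L^2(0,L)$ prescribed at the initial time $s$. After the translation $t\mapsto t-s$, Lemma \ref{wp.remark.aux1} applies and yields, uniformly in $x\in[0,L]$,
\begin{equation*}
\|w^{s}(x,\cdot)\|_{H^{1/3}(s,T)}+\|w^{s}_x(x,\cdot)\|_{L^2(s,T)}\le C\,\|f(\cdot,s)\|_{L^2(0,L)},
\end{equation*}
with $C$ independent of $s$ and $x$.

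Next I would combine these bounds. Because $1/3<1/2$, extension by zero is a bounded operator on $H^{1/3}$, so the norm of $w^{s}(x,\cdot)$ over $(0,T)$ is controlled by its norm over $(s,T)$; the same holds trivially for the $L^2$ norm of $w^{s}_x$. Viewing $y(x,\cdot)$ as the Bochner integral $\int_0^T w^{s}(x,\cdot)\,ds$ in the normed spaces $H^{1/3}(0,T)$ and $L^2(0,T)$, Minkowski's integral inequality gives
\begin{equation*}
\|y(x,\cdot)\|_{H^{1/3}(0,T)}+\|y_x(x,\cdot)\|_{L^2(0,T)}
\le\int_0^T\Bigl(\|w^{s}(x,\cdot)\|_{H^{1/3}(0,T)}+\|w^{s}_x(x,\cdot)\|_{L^2(0,T)}\Bigr)\,ds.
\end{equation*}
Inserting the per-$s$ estimate above and taking the supremum over $x\in[0,L]$ produces the desired inequality.

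The hard part will be the rigorous justification of the two interchanges underlying the Minkowski step. First, one must check that $s\mapsto w^{s}(x,\cdot)$ is strongly measurable and Bochner integrable into $H^{1/3}(0,T)$ (and, for the $y_x$ trace, into the appropriate $x$-Sobolev space), so that the trace at $x=\mathrm{const}$ commutes with the time integral. Second, one must ensure uniformity in $x$ of the constant $C$ when passing the supremum inside the integral. The fractional-order norm $H^{1/3}$ has to be treated through its Gagliardo seminorm, verifying that extension by zero does not inflate the seminorm — this is precisely where the restriction $1/3<1/2$ is essential. Apart from these functional-analytic points, the estimate follows mechanically from Lemma \ref{wp.remark.aux1}.
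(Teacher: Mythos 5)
First, a point of reference: the paper itself does not prove Lemma \ref{app.lema.2} --- it is one of three ``sharp Kato smoothing'' lemmas recalled from the literature without proof --- so your argument can only be measured against the standard derivation, which is exactly the Duhamel-plus-Minkowski reduction to the homogeneous case that you propose. The strategy is therefore the right one, and your measurability/uniformity-in-$x$ caveats are routine.

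However, one step fails as written: the claim that, because $1/3<1/2$, extension by zero controls $\|w^{s}(x,\cdot)\|_{H^{1/3}(0,T)}$ by $\|w^{s}(x,\cdot)\|_{H^{1/3}(s,T)}$ with a constant independent of $s$. Boundedness of zero-extension on $H^{\sigma}$, $\sigma<1/2$, is a fixed-domain statement; the operator norm of zero-extension from $H^{1/3}(s,T)$ into $H^{1/3}(0,T)$ degenerates as $s\to T^{-}$. Indeed, take $u\equiv 1$ on $(s,T)$ and set $\lambda:=T-s$: the extension has Gagliardo seminorm squared of order $\lambda^{1/3}$ (produced by the jump at $t=s$), while $\|u\|_{H^{1/3}(s,T)}^{2}=\lambda$, so the operator norm is at least of order $\lambda^{-1/3}$. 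Your Minkowski step then only yields a bound by $\int_{0}^{T}C(T-s)^{-1/3}\|f(\cdot,s)\|_{L^{2}(0,L)}\,ds$, which is not dominated by $C\int_{0}^{T}\|f(\cdot,s)\|_{L^{2}(0,L)}\,ds$ when $f$ concentrates near $t=T$. The gap is real but local and fixable: the building block $w^{s}=S(\cdot-s)f(\cdot,s)$ is defined on all of $(s,+\infty)$, so apply the time-translated homogeneous estimate (Lemma \ref{wp.remark.aux1}, or the half-line bound behind Lemma \ref{app.lemma.1}) on the interval $(s,s+T)$, whose length is fixed equal to $T$; extend by zero from $(s,s+T)$ to $\mathbb{R}$ --- now with a constant depending only on $T$, uniformly in $s$ --- and finally restrict to $(0,T)$, which does not increase the norm. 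With this modification your proof goes through; note the $L^{2}$ bound on $y_{x}(x,\cdot)$ never had this issue, since zero-extension is isometric in $L^{2}$.
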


    Combining the previous results, we obtain the following Lemma for the linear system 
    \eqref{wp.main.linear.system.aux}. 
\begin{lema}\label{app.lema.3}
    For any $T>0$,\, $f\in L^1_{loc}(0,+\infty; L^2(0,L))$ and $y_0\in L^2(0,T)$, the linear
    problem \eqref{wp.main.linear.system.aux} admits a unique solution 
    $$y\in C([0,T];L^2(0,L))\cap L^2(0,T;H^1(0,L))\cap L^\infty(0,L;H^{\frac{1}{3}}(0,T))$$
    satisfying $y_x\in L^\infty(0,L;L^2(0,T))$. 
    Furthermore, there exists a constant $C$ independent of $T,\,y_0$ and $f$ such that
    \begin{equation*}\label{app.lema3.inequality}
    \begin{aligned}
        \sup\limits_{t\in[0,T]}\|y(\cdot,t)\|_{L^2(0,L)}+ &\|y\|_{L^2(0,T;H^1(0,L))} 
        +\sup\limits_{x\in [0,L]}\Bigl(\|y(x,\cdot)\|_{H^{\frac{1}{3}}(0,T)}+\|y_x(x,\cdot)\|_{L^2(0,T)}\Bigr)
        \\
        &\leq C\Bigl(\|f\|_{L^1(0,T;L^2(0,L))}+\|y_0\|_{L^2(0,L)}\Bigr). 
    \end{aligned}
    \end{equation*}   
\end{lema}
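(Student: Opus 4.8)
The plan is to exploit the linearity of \eqref{wp.main.linear.system.aux} and assemble the claimed regularity and estimate by superposition of the results already established, so that no new analysis is required. By Proposition \ref{wp.prop.linear_case} the problem has a unique mild solution $y\in C([0,T];L^2(0,L))$ given by the variation-of-parameters formula \eqref{wp.formula.semigroup}; uniqueness for the whole statement is therefore inherited directly from that proposition. I would then split this solution as $y=y_1+y_2$, where $y_1(t):=S(t)y_0$ is the part driven by the initial datum alone (with $f\equiv 0$) and $y_2(t):=\int_0^t S(t-s)f(s)\,ds$ is the part driven by the source alone (with zero initial datum). Both $y_1$ and $y_2$ satisfy the same homogeneous Dirichlet and periodic Neumann boundary conditions as $y$, so the decomposition is consistent with the data, and it is precisely the splitting matched by the two sharp Kato lemmas.

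Next I would collect the estimates for each piece. For $y_1$, Lemma \ref{wp.lemma.katoproperty1} gives the $C([0,T];L^2(0,L))\cap L^2(0,T;H^1(0,L))$ regularity together with the bound by $\|y_0\|_{L^2(0,L)}$, while Lemma \ref{wp.remark.aux1} provides the sharp time-trace estimate
$$\sup_{x\in[0,L]}\Bigl(\|y_1(x,\cdot)\|_{H^{1/3}(0,T)}+\|(y_1)_x(x,\cdot)\|_{L^2(0,T)}\Bigr)\le C\|y_0\|_{L^2(0,L)}.$$
For $y_2$, Proposition \ref{wp.prop.linear_case} and Lemma \ref{wp.lemma.katoproperty1} control the $C([0,T];L^2(0,L))\cap L^2(0,T;H^1(0,L))$ norms by $\|f\|_{L^1(0,T;L^2(0,L))}$, and Lemma \ref{app.lema.2} yields the analogous trace estimate with right-hand side $\int_0^T\|f(\cdot,s)\|_{L^2(0,L)}\,ds=\|f\|_{L^1(0,T;L^2(0,L))}$. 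In particular each term belongs to $L^\infty(0,L;H^{1/3}(0,T))$ with $x$-derivative in $L^\infty(0,L;L^2(0,T))$.

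Finally I would conclude by the triangle inequality: adding the two families of bounds produces the membership
$$y\in C([0,T];L^2(0,L))\cap L^2(0,T;H^1(0,L))\cap L^\infty(0,L;H^{1/3}(0,T)),\qquad y_x\in L^\infty(0,L;L^2(0,T)),$$
together with the combined estimate. I do not expect a genuine obstacle here, since all the hard work — in particular transferring the whole-line sharp Kato smoothing of Lemma \ref{app.lemma.1} to the bounded interval — has already been absorbed into Lemmas \ref{wp.remark.aux1} and \ref{app.lema.2}. The only points requiring a little care are bookkeeping: first, verifying that the constants furnished by those lemmas are uniform in $T$, so that the final constant $C$ is genuinely independent of $T$, $y_0$ and $f$ (depending only on $\nu_0$); and second, noting that the restriction of $f\in L^1_{loc}(0,+\infty;L^2(0,L))$ to $(0,T)$ lies in $L^1(0,T;L^2(0,L))$, so that each cited estimate applies verbatim on the fixed interval $(0,T)$.
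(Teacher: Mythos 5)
Your proposal is correct and is exactly the argument the paper intends: the lemma is introduced with ``Combining the previous results,'' and your superposition $y=S(t)y_0+\int_0^t S(t-s)f(s)\,ds$, treating the homogeneous part via Lemma \ref{wp.remark.aux1} and the forced part via Lemma \ref{app.lema.2}, with Lemma \ref{wp.lemma.katoproperty1} supplying the energy-space bounds, is precisely that combination. Your two bookkeeping caveats (restriction of $f$ to $(0,T)$ and uniformity of the constants) are also the right ones to flag.
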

		
	\noindent In order to build the necessary regularity which will be used later on,  we introduce a weak 
	formulation of \eqref{wp.main.linear.system.aux} for $f\in L^2(0,T;H^{-1}(0,L))$.
	\begin{definition}\label{def1}
		For $(f,y_0)\in L^2(0,T;H^{-1}(0,L))\times L^2(0,L)$ a function $y\in C([0,T];L^2(0,L))$ is 
		called a weak
		solution of \eqref{wp.main.linear.system.aux} if it satisfies the following identity
		 \begin{equation}\label{weak-solution}
		 	\int\limits_{0}^T\int\limits_{0}^L yg dxdt +(y(T),\varphi_T)_{L^2(0,L)}
		 	=\int\limits_{0}^T\langle f,\varphi\rangle_{H^{-1}(0,L)\times H^1_0(0,L)} dt
		 	+(y_0,\varphi(0))_{L^2(0,L)},	
		 \end{equation}
		 for all $(g,\varphi_T)\in L^1(0,T;L^2(0,L))\times L^2(0,L)$, where $\varphi=\varphi(g,\varphi_T)$
		 is the mild solution of
		 \begin{equation}\label{dual-system}
    		\left\{
    		\begin{array}{llll}
        	-\varphi_t-\varphi_{xxx}-\nu_0\varphi_{xx}=g &\mbox{in} & (0,L)\times (0,+\infty),\\
        	\varphi(0,t)=\varphi(L,t)=0  &\mbox{on}& (0,T),\\
        	\varphi_x(0,t)=\varphi_x(L,t)  &\mbox{on}& (0,T),\\
        	\varphi(\cdot,T)=\varphi_T &\mbox{in}& (0,L).        
   			\end{array}\right.
		\end{equation} 
	\end{definition}
	In the following proposition we prove a regularity result for \eqref{wp.main.linear.system.aux} by considering
	the pair $(f,y_0)$ belongs to  $L^2(0,T;H^{s-1}(0,L))\times H^{s}(0,L))$ for  any given  $s\in [0,3]$. 
		
	\begin{proposition}\label{wp-full.linear.s}
		Let $0\leq s\leq 3$ be given. For  any  $(f,y_0)\in L^2(0,T;H^{s-1}(0,L))\times H^{s}(0,L))$,  the system \eqref{wp.main.linear.system.aux} admits 
		a unique weak solution $y\in C([0,T];H^s(0,L))\cap L^2([0,T];H^{s+1}(0,L))$  and, 
		furthermore, there exists a positive constant $C$ such that 
		 \begin{equation}
			\|y\|_{L^2(0,T;H^{s+1}(0,L))}
			\leq C\Bigl(\|f\|_{ L^2(0,T;H^{s-1}(0,L))}+\|y_0\|_{H^{s}(0,L)}\Bigr).
    	\end{equation} 
	\end{proposition}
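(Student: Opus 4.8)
The plan is to prove the proposition at the two endpoints $s=0$ and $s=3$ and then obtain every intermediate value by interpolating the solution operator $(f,y_0)\mapsto y$. Uniqueness will be clear throughout: any two weak solutions in the sense of Definition \ref{def1} have a difference that is a weak solution with data $(0,0)$, and testing the weak formulation \eqref{weak-solution} against arbitrary $(g,\varphi_T)$ forces it to vanish. So the real content is the pair of a priori estimates at the endpoints together with existence.

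For $s=0$ the data $(f,y_0)\in L^2(0,T;H^{-1}(0,L))\times L^2(0,L)$ and the target space is $C([0,T];L^2)\cap L^2(0,T;H^1)$. I would construct $y$ by transposition, using the weak formulation \eqref{weak-solution}: the right-hand side of \eqref{weak-solution} is a bounded linear functional of $(g,\varphi_T)$ because the dual solution $\varphi$ of \eqref{dual-system} obeys the sharp Kato estimates of Lemma \ref{app.lema.3} (these apply to the backward operator since $A^*$ is dissipative, exactly as in Proposition \ref{wp.prop.linear_case}); in particular $\varphi\in L^2(0,T;H^1_0)$, so the pairing $\int_0^T\langle f,\varphi\rangle\,dt$ is controlled by $\|f\|_{L^2(0,T;H^{-1})}\|\varphi\|_{L^2(0,T;H^1_0)}$. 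Riesz representation then yields $y$, and the Kato smoothing of Lemma \ref{wp.lemma.katoproperty1}, read off for $H^{-1}$ data, gives $y\in L^2(0,T;H^1)$ with the desired bound.

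The $s=3$ endpoint is the crux. Existence of a classical solution for smooth, compatible data follows from the semigroup generated by $A$ (the Remark following Proposition \ref{wp.prop.linear_case}), giving $y\in C([0,T];\mathcal{D}(A))$. For the a priori estimate I would gain one full spatial derivative beyond $H^3$ by a Kato-type argument at the top order. The key observation is that $w=y_t$ satisfies the same system \eqref{wp.main.linear.system.aux} with source $f_t$ and initial value $f(0)+Ay_0$, and --- unlike $y_x$ or $Ay$ --- it inherits the mixed boundary conditions, since differentiating $y(0,t)=y(L,t)=0$ and $y_x(0,t)=y_x(L,t)$ in $t$ preserves them. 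Applying the $s=0$ machinery to $w$ and then reading $Ay=f-y_t=-y_{xxx}+\nu_0 y_{xx}$ as a third-order elliptic relation, elliptic regularity for $A$ with its boundary conditions converts control of $y_t$ and $f$ in $L^2(0,T;H^1)$ into $y\in L^2(0,T;H^4)$. The main obstacle here is precisely the mixed boundary conditions: a direct $H^1$- or $H^2$-level energy estimate produces uncontrolled boundary traces of the form $y_{xx}(L)^2-y_{xx}(0)^2$ (the periodic-Neumann condition does not cancel them as it does the first-order term), so these boundary contributions must be absorbed using the sharp trace bounds of Lemmas \ref{app.lema.2} and \ref{app.lema.3}, which control $\sup_x\|y_x(x,\cdot)\|_{L^2(0,T)}$, rather than by integration by parts alone. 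The smooth-data estimate is then transferred to general $(f,y_0)\in L^2(0,T;H^2)\times H^3$ by density.

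Finally, for $s\in(0,3)$ I would interpolate the solution operator between the two endpoints. With $\theta=s/3$ one has $[H^{-1},H^2]_\theta=H^{s-1}$ and $[L^2,H^3]_\theta=H^s$ for the data, and $[H^1,H^4]_\theta=H^{s+1}$ for the output, so real (or complex) interpolation of the bounded map $(f,y_0)\mapsto y$ delivers $L^2(0,T;H^{s-1})\times H^s\to L^2(0,T;H^{s+1})$ with the stated constant, and similarly the continuity in time $C([0,T];H^s)$. The only point needing care in this last step is matching the interpolation spaces of the domains $\mathcal{D}(A^\theta)$ with the Sobolev spaces $H^s$ in the range where the boundary conditions become active; throughout $[0,3]$, and for the present mixed conditions, these coincide in the range needed for the norm estimate.
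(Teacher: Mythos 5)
Your overall architecture (two endpoint estimates plus interpolation, with uniqueness via the transposition identity) matches the paper's, and your $s=0$ argument is fine --- the paper obtains it more directly from the energy identity $\frac{d}{dt}\int_0^L u^2\,dx + 2\nu_0\int_0^L u_x^2\,dx = 2\int_0^L fu\,dx$, but transposition plus Kato smoothing also works. The genuine gap is at the $s=3$ endpoint. You differentiate in time: $w=y_t$ solves the system with source $f_t$ and initial value $f(0)+Ay_0$, and you then apply the $s=0$ machinery to $w$. But the hypothesis for $s=3$ is only $f\in L^2(0,T;H^{2}(0,L))$, which carries \emph{no} time regularity: neither $\|f_t\|_{L^2(0,T;H^{-1}(0,L))}$ nor the trace $f(0)$ is defined, let alone controlled by $\|f\|_{L^2(0,T;H^2(0,L))}$. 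The a priori bound your argument produces therefore involves norms of $f$ that do not appear in the statement, and it cannot be transferred to general data by density --- a density argument only closes an estimate whose right-hand side is the norm in which you approximate. So as written the endpoint estimate $\|y\|_{L^2(0,T;H^4)}\leq C(\|y_0\|_{H^3}+\|f\|_{L^2(0,T;H^2)})$ is not proved.

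The paper avoids this by commuting the \emph{spatial} operator, not $\partial_t$: setting $v=Au$, one checks (first for data in $\mathcal{D}(A^2)$ with $f$ smooth and $\mathcal{D}(A)$-valued, then by density) that $v$ solves the same system with data $(A y_0, Af)$, so the $s=0$ energy estimate gives
\begin{equation*}
\|Au\|_{L^2(0,T;H^1(0,L))}\leq C\left(\|Ay_0\|_{L^2(0,L)}+\|Af\|_{L^2(0,T;H^{-1}(0,L))}\right)
\leq C\left(\|y_0\|_{H^3(0,L)}+\|f\|_{L^2(0,T;H^{2}(0,L))}\right),
\end{equation*}
which is exactly in terms of the hypothesis norms; elliptic regularity for the third-order operator $A$ then yields $u\in L^2(0,T;H^4(0,L))$. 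Note also that the obstacle you invoke to justify using $y_t$ instead of $Ay$ --- that $Ay$ would not inherit the mixed boundary conditions --- is not real: for $y_0\in\mathcal{D}(A^2)$ and $f$ regular, $Ay(t)\in\mathcal{D}(A)$ by construction (it is itself the mild solution associated with $(Ay_0,Af)$), so the boundary conditions hold and no uncontrolled traces of $y_{xx}$ arise; the Kato trace lemmas, which bound $\sup_x\|y_x(x,\cdot)\|_{L^2(0,T)}$ only, would in any case not control the higher-order traces your direct $H^2$-energy estimate generates.
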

	
	\begin{proof}
	Consider the system
	\begin{equation*} \frac{du}{dt} =A u+ f, \quad u(0) =\phi \label{1.1}\end{equation*}
	 as defined in \eqref{wp.main.linear.system.aux}. Since  $A$ is the infinitesimal generator of a semigroup $S(t)$ 
	 in the space $L^2 (0,L)$,  it follows from the standard semigroup theory that 
 	$$\phi \in L^2 (0,L), \quad f\in L^1 (0,T; L^2 (0,L)) \implies  u\in C([0,T]; L^2 (0,L))$$
 	and moreover, there exists a constant $C>0$ such that
 	
 	\begin{equation*} \label{1.2}
 		\| u\|_{C([0,T]; L^2 (0,L))}\leq C\left( \| \phi\|_{L^2 (0,L)} + \| f\|_{L^1 ((0,T); L^2 (0,L))}\right). 
 	\end{equation*}
 	In addition,
 	$$\phi \in {\cal D} (A), \quad f\in L^1 (0,T; {\cal D} (A)) \implies  u\in C([0,T]; H^3 (0,L)) $$
 	and furthermore, there exists a constant $C>0$ such that
 	\begin{equation*} \label{1.3}
 		\| u\|_{C([0,T]; H^3 (0,L))}\leq C\left ( \| \phi\|_{H^3(0,L)} + \| f\|_{L^1( (0,T); H^3 (0,L))} \right).
 	\end{equation*}
	Taking into account that 
	$$ \frac{d}{dt} \int _0^L u^2(x,t) dx + 2\nu_0\int ^L_0 u_x^2 (x,t) dx =2\int ^L_0 f(x,t) u(x,t) $$ 
	for any $t\geq 0$,  we arrive at 
	$$ \int ^L_0 u^2(x,t) dx- \int ^L_0 u^2(x,0) dx +2\nu_0\int ^t_0 \int ^L_0 u_x^2 (x,t) dxdt 
	= 2\int ^t_0 \int ^L_0 f(x,t) u(x,t)dx,$$
	which implies that 
	\begin{equation*} \label{1.4}
  		\| u\|_{L^2 (0,T; H^1 (0,L))} \leq C \left ( \|\phi\|_{L^2 (0,L)} + \| f\|_{L^2(0,T; H^{-1} (0,L))} \right).
  	\end{equation*}
	Similarly, if we let $v= Au$, then we have 
	$$
 	 \| v\|_{L^2 (0,T; H^1 (0,L))} \leq C \left ( \|A\phi\|_{L^2 (0,L)} + \|Af\|_{L^2(0,T; H^{-1} (0,L))} \right), 
 	$$
 	which yields that
	\begin{equation*} \label{1.5}
 		 \| u\|_{L^2 (0,T; H^4 (0,L))} \leq C \left ( \|\phi\|_{H^3 (0,L)} + \| f\|_{L^2(0,T; H^{2} (0,L))} \right).
  	\end{equation*}
	By interpolation arguments,
  \begin{equation*} \label{1.6}
  	\| u\|_{L^2 (0,T; H^{1+ 3\theta}  (0,L))} \leq C \left ( \|\phi\|_{H^{3\theta} (0,L)} + \| f\|_{L^2(0,T; 
  	H^{-1+3\theta} (0,L))} \right),
  \end{equation*}
  for $0\leq \theta \leq 1$, or in equivalent form
  \begin{equation*} \label{1.7}
 	 \| u\|_{L^2 (0,T; H^{1+ s}  (0,L))} \leq C \left ( \|\phi\|_{H^s (0,L)} + \| f\|_{L^2(0,T; H^{s-1} (0,L))}
   \right),
  \end{equation*}
  for $0\leq s\leq 3$. This completes the proof of Proposition \ref{wp-full.linear.s}.
  \end{proof}
	Now, we extend the previous Proposition to the linearized system \eqref{wp.main.linear.system}. For this purpose, 
	let us introduce  the space $Y^s_T$ as follows: for any $0\leq s\leq 3$ and any $T>0$, 
	$$Y^s_T:=C([0,T];H^{s}(0,L))\cap L^2([0,T];H^{s+1}(0,L)).$$
	
	\begin{lema}\label{lema.regularity.trajectory.nonlinearterm}
		For given  $0\leq s\leq 3$ and $T>0$, there exists a positive constant $C$ such that 
		\begin{equation}\label{regularity.nonlinearterm.wp}
		\|(uv)_x\|_{L^2(0,T;H^{s-1}(0,L))}\leq C\|u\|_{Y_T^s}\|v\|_{Y_T^s}	
		\end{equation}
		and 
		\begin{equation}\label{regularity.diffusionterm.wp}
			\|\tilde{\nu}v_{xx}\|_{L^2(0,T;H^{s-1}(0,L))}\leq C\|\tilde\nu\|_{L^\infty(0,T)}\|v\|_{Y^s_T}	
		\end{equation}
		holds for any $u,v\in Y_T^s$ and $\tilde\nu\in L^\infty(0,T)$.
	\end{lema}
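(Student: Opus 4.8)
The plan is to reduce both estimates to pointwise-in-time Sobolev inequalities and then integrate in time with H\"older's inequality, exploiting the two distinct time-integrabilities packaged in $Y^s_T$: the $C([0,T];H^s)$ part supplies an $L^\infty$-in-time bound at regularity $s$, while the $L^2([0,T];H^{s+1})$ part supplies an $L^2$-in-time bound at regularity $s+1$.

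For \eqref{regularity.nonlinearterm.wp} I would first observe that $\partial_x\colon H^s(0,L)\to H^{s-1}(0,L)$ is bounded for every $s\in[0,3]$ (immediate on $\mathbb{R}$ via the Fourier transform, and transferred to the interval through a Sobolev extension operator, interpreting $H^{s-1}(0,L)$ by duality when $s<1$). It then suffices to bound $\|uv\|_{L^2(0,T;H^s(0,L))}$. For this I would use, at each fixed time $t$, the Sobolev multiplication estimate
\[
\|uv\|_{H^s(0,L)}\le C\,\|u\|_{H^{s+1}(0,L)}\,\|v\|_{H^s(0,L)},\qquad 0\le s\le 3.
\]
On the whole line this is the classical product law $H^{s_1}\cdot H^{s_2}\hookrightarrow H^s$ with $s_1=s+1$, $s_2=s$, which applies because $s_1,s_2\ge s\ge0$ and $s_1+s_2-s=s+1>\tfrac12$; on $(0,L)$ it follows once more by extension and restriction. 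In the delicate endpoint $s=0$, where $H^0=L^2$ is not an algebra, this collapses to the elementary one-dimensional bound $\|uv\|_{L^2}\le\|u\|_{L^\infty}\|v\|_{L^2}\le C\|u\|_{H^1}\|v\|_{L^2}$ coming from $H^1(0,L)\hookrightarrow L^\infty(0,L)$. Since $t\mapsto\|v(\cdot,t)\|_{H^s}$ lies in $L^\infty(0,T)$ and $t\mapsto\|u(\cdot,t)\|_{H^{s+1}}$ lies in $L^2(0,T)$, an $L^2_t\times L^\infty_t$ H\"older estimate gives
\[
\|(uv)_x\|_{L^2(0,T;H^{s-1})}\le C\|uv\|_{L^2(0,T;H^s)}\le C\,\|u\|_{L^2(0,T;H^{s+1})}\,\|v\|_{C([0,T];H^s)}\le C\,\|u\|_{Y^s_T}\|v\|_{Y^s_T},
\]
which is exactly \eqref{regularity.nonlinearterm.wp}.

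For \eqref{regularity.diffusionterm.wp} the argument is shorter because $\tilde\nu$ depends only on $t$. Since $\partial_x^2\colon H^{s+1}(0,L)\to H^{s-1}(0,L)$ is bounded for $s\in[0,3]$, for a.e. $t$ one has $\|\tilde\nu(t)v_{xx}(\cdot,t)\|_{H^{s-1}}=|\tilde\nu(t)|\,\|v_{xx}(\cdot,t)\|_{H^{s-1}}\le C\,|\tilde\nu(t)|\,\|v(\cdot,t)\|_{H^{s+1}}$. Taking the $L^2(0,T)$ norm in time and factoring out $\|\tilde\nu\|_{L^\infty(0,T)}$ yields $\|\tilde\nu v_{xx}\|_{L^2(0,T;H^{s-1})}\le C\|\tilde\nu\|_{L^\infty(0,T)}\|v\|_{L^2(0,T;H^{s+1})}\le C\|\tilde\nu\|_{L^\infty(0,T)}\|v\|_{Y^s_T}$, as claimed.

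I expect the main obstacle — and the only point that genuinely uses the full range $0\le s\le3$ — to be the pointwise product estimate together with the correct interpretation of the negative-order spaces $H^{s-1}(0,L)$ when $s<1$. The cleanest rigorous route is to fix a bounded extension operator $H^{\sigma}(0,L)\to H^{\sigma}(\mathbb{R})$, establish the multiplication and differentiation bounds on $\mathbb{R}$ by Fourier analysis, and restrict. Alternatively, one may verify the integer endpoints $s=0,1,2,3$ directly (using $H^1\hookrightarrow L^\infty$ and the algebra property of $H^s$ for $s>\tfrac12$) and recover the intermediate exponents by interpolation, in the same spirit as Proposition \ref{wp-full.linear.s}.
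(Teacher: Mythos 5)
Your proof is correct, but it takes a different route from the paper's. The paper argues by cases on integer values of $s$: for $s=0$ it bounds $\|uv\|_{L^2(Q)}$ by $\|v\|_{C([0,T];L^2)}\|u\|_{L^2(0,T;L^\infty)}$ using $H^1(0,L)\hookrightarrow L^\infty(0,L)$, for $s=1$ it expands $(uv)_x=u_xv+uv_x$ and uses the same embedding, it asserts that $s=2,3$ are similar, and it then invokes interpolation to reach the intermediate exponents; the diffusion term is handled exactly as you do, by factoring out $\sup_t|\tilde\nu|$. Your argument instead establishes a single pointwise-in-time product law $\|uv\|_{H^s(0,L)}\le C\|u\|_{H^{s+1}(0,L)}\|v\|_{H^s(0,L)}$ valid uniformly for $s\in[0,3]$ (via extension to $\mathbb{R}$ and the Fourier-side multiplication estimate, with the $s=0$ endpoint reduced to $H^1\hookrightarrow L^\infty$), followed by an $L^2_t\times L^\infty_t$ H\"older estimate. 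What your route buys is that it treats all fractional $s$ at once and avoids the step the paper leaves implicit, namely interpolating a \emph{bilinear} map between the scales $Y^s_T$ and $L^2(0,T;H^{s-1})$ — a standard but nontrivial multilinear-interpolation fact that the paper compresses into the phrase ``interpolation properties.'' What it costs is the need to make sense of the negative-order spaces $H^{s-1}(0,L)$ for $s<1$ and of the boundedness of $\partial_x$ into them, which you correctly flag and resolve by duality and extension (the paper faces the same definitional issue at $s=0$ but does not comment on it). Your closing remark that one could alternatively verify the integer endpoints and interpolate is precisely the paper's strategy, so the two proofs meet there.
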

	\begin{proof} 
			\begin{enumerate}
		\item [i)] The case $s=0$. In this case, we have 
			$$\|uv\|^2_{L^2(Q)}\leq \int\limits_0^T\|u(\cdot,t)\|^2_{L^{\infty}(0,L)}\int\limits_0^Lv^2(x,t)dxdt	
			\leq \|v\|^2_{C([0,T];L^2(0,L))}\|u\|^2_{L^2(0,T;L^\infty(0,L))}.$$
			Taking into account that $H^1(0,L)\hookrightarrow L^\infty(0,L)$, the inequality
			\eqref{regularity.nonlinearterm.wp} is proved.
			
			On the other hand, 
			$$\|\tilde\nu v_{xx}\|^2_{L^2(0,T;H^{-1}(0,L))}\leq \sup\limits_{t\in [0,T]}|\tilde{v}|^2\|v\|^2_{L^2(0,T;H^1(0,L))}.
			$$
		\item [ii)] The case $s=1$. Following the previous steps, we have
		\begin{equation*}
		\begin{aligned}
			\|(uv)_x\|^2_{L^2(Q)}&\leq 2\int\limits_0^T\Bigl(\|v(\cdot,t)\|^2_{L^{\infty}(0,L)}\|u(\cdot,t)\|
			^2_{H^1(0,L)}+\|u(\cdot,t)\|^2_{L^{\infty}(0,L)}\|v(\cdot,t)\|^2_{H^1(0,L)} \Bigr)dt\\
			&\leq C\|u\|_{Y^1_T}\|v\|_{Y^1_T}
		\end{aligned}
		\end{equation*}
			 and
			 $$\|\tilde\nu v_{xx}\|^2_{L^2(Q)}\leq \sup\limits_{t\in [0,T]}|\tilde{v}|^2\|v\|^2_{L^2(0,T;H^2(0,L))}.
			\leq C\|\tilde\nu\|_{L^\infty(0,T)}\|v\|_{Y^1_T}.$$ 
		\end{enumerate}
		Similar arguments for $s=2, 3$ as well as interpolation properties allow to complete the proof.  
	\end{proof}

  \begin{proposition}\label{wp.regularity-linearized-system}
  	Let $T>0$ and  $s\in [0,3]$  be given and  assume $\overline{y}$ satisfies  \eqref{intro.regularity.trajectory}. Then
  	for any  $y_0\in H^s(0,L)$, the linearized system \eqref{wp.main.linear.system} admits a unique solution $y\in Y^s_{T}$.	
  \end{proposition}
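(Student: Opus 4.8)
The plan is to realize \eqref{wp.main.linear.system} as a bounded, boundedly invertible perturbation of the constant-coefficient problem \eqref{wp.main.linear.system.aux}, for which Proposition \ref{wp-full.linear.s} already supplies a solution operator on $Y^s_T$. The crucial structural point is that the convection terms $\overline{y}y_x+\overline{y}_xy=(\overline{y}y)_x$ and the diffusion correction $\tilde\nu y_{xx}$ both map $Y^s_T$ boundedly into $L^2(0,T;H^{s-1}(0,L))$ --- this is exactly the content of \eqref{regularity.nonlinearterm.wp} and \eqref{regularity.diffusionterm.wp}, using $\overline{y}\in Y^s_T$ from \eqref{intro.regularity.trajectory} and $\tilde\nu\in L^\infty(0,T)$. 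Hence the full linearized operator $L\colon Y^s_T\to L^2(0,T;H^{s-1}(0,L))$, $Ly:=y_t+y_{xxx}-\nu(t)y_{xx}+(\overline{y}y)_x$, together with the trace $y\mapsto y(\cdot,0)$, is a bounded linear map.

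First I would establish the a priori estimate $\|y\|_{Y^s_T}\le C(\|f\|_{L^2(0,T;H^{s-1})}+\|y_0\|_{H^s})$, which is the engine of the whole argument. For $s=0$ I multiply the equation by $y$ and integrate; the mixed boundary conditions $y(0)=y(L)=0$, $y_x(0)=y_x(L)$ are chosen precisely so that $\int_0^L y_{xxx}y\,dx=0$, while the diffusion produces the coercive term $\nu(t)\int_0^L y_x^2\,dx\ge\nu_0\int_0^L y_x^2\,dx$ because $\tilde\nu\ge 0$. The convection contributes $\tfrac12\int_0^L\overline{y}_x\,y^2\,dx$, which I bound by Gagliardo--Nirenberg and Young's inequality so as to absorb a fraction of $\nu_0\|y_x\|_{L^2}^2$ and leave a factor integrable in time thanks to $\overline{y}\in L^2(0,T;H^1)$; Gronwall then yields the $C([0,T];L^2)\cap L^2(0,T;H^1)$ bound. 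For $1\le s\le 3$ I differentiate the equation in $x$ (legitimate at the weak-solution level after the usual regularization), noting that $\nu(t)$ commutes with $\partial_x$, and repeat the energy estimate on $\partial_x^k y$, controlling the convection derivatives via \eqref{regularity.nonlinearterm.wp} and interpolation exactly as in Lemma \ref{lema.regularity.trajectory.nonlinearterm}.

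With the uniform a priori estimate in hand, existence and uniqueness follow by the method of continuity. I would connect $L$ to the solvable constant-coefficient operator of Proposition \ref{wp-full.linear.s} along the family $L_\sigma y:=y_t+y_{xxx}-(\nu_0+\sigma\tilde\nu)y_{xx}+\sigma(\overline{y}y)_x$, $\sigma\in[0,1]$. Since $\nu_0+\sigma\tilde\nu\ge\nu_0$ for every $\sigma$, the energy computation above gives the estimate with a constant uniform in $\sigma$; each $L_\sigma$ is bounded from $Y^s_T$ into $L^2(0,T;H^{s-1})$ by \eqref{regularity.nonlinearterm.wp}--\eqref{regularity.diffusionterm.wp} and depends affinely (hence continuously) on $\sigma$; and $L_0$ is onto by Proposition \ref{wp-full.linear.s}. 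The method of continuity then makes $L_1=L$ onto, and the a priori estimate applied to the difference of two solutions gives uniqueness, hence a unique $y\in Y^s_T$ solving \eqref{wp.main.linear.system}. (Alternatively, one may first extend Proposition \ref{wp-full.linear.s} to the time-dependent principal part and then add the convection by a short-time contraction; but the continuity method treats diffusion and convection simultaneously and is cleaner.)

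The hard part is the diffusion term $\tilde\nu y_{xx}$. Although \eqref{regularity.diffusionterm.wp} shows it lands in $L^2(0,T;H^{s-1})$, it is \emph{not} a subordinate perturbation of the third-order operator in the $Y^s_T$ scale: the solution operator of \eqref{wp.main.linear.system.aux} gains exactly two derivatives, which $\partial_x^2$ consumes, so moving $\tilde\nu y_{xx}$ to the right-hand side and iterating would give an order-zero, non-contractive map with no smallness on short time intervals. This is why the diffusion must remain in the principal part and be dominated through its coercive sign $\tilde\nu\ge 0$, as above; the convection, by contrast, genuinely carries a factor $\|\overline{y}\|_{L^2(0,\tau;H^1)}\to 0$ and is the only term amenable to a purely perturbative treatment. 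A secondary technical point is justifying the integrations by parts and the $x$-differentiation for $s$ up to $3$ at the regularity of a weak solution, which is handled by a standard regularization/density argument together with the boundary identities above.
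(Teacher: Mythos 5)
Your route is genuinely different from the paper's. The paper proves this proposition by a short--time fixed point: it defines $\Lambda(v)$ as the solution of the constant--coefficient problem \eqref{wp.main.linear.system.aux} with right--hand side $\tilde\nu(t)v_{xx}+(\overline{y}v)_{x}$, establishes the contraction estimate \eqref{estimate.map.contraction1-1} on a small interval $[0,T^*]$, and iterates on $(kT^*,(k+1)T^*]$; you instead keep the diffusion in the principal part and invoke the method of continuity along $L_\sigma$. Your objection to the perturbative treatment of $\tilde\nu y_{xx}$ is in fact well taken and applies to the paper's own argument: in \eqref{estimate.map.contraction1-1} the factor $\theta^{1/2}$ multiplying $\|\tilde\nu\|_{L^\infty(0,T)}$ is not justified, since Lemma \ref{lema.regularity.trajectory.nonlinearterm} only gives $\|\tilde\nu v_{xx}\|_{L^2(0,\theta;H^{-1})}\le \|\tilde\nu\|_{L^\infty}\|v\|_{L^2(0,\theta;H^1)}$ and the $Y^0_\theta$--norm gives no control of $\|v\|_{L^\infty(0,\theta;H^1)}$ from which such a factor could be extracted; only the convection term enjoys genuine short--time smallness (through $\|\overline{y}\|_{L^2(0,\theta;H^1)}\to 0$). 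So your coercivity treatment of the diffusion is the right correction, and your $s=0$ energy estimate is sound: the boundary conditions $y(0)=y(L)=0$, $y_x(0)=y_x(L)$ do annihilate $\int_0^L y_{xxx}y\,dx$, and the convection term $\tfrac12\int_0^L\overline{y}_x y^2\,dx$ is absorbed by Gagliardo--Nirenberg, Young and Gronwall exactly as you say.

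Two gaps remain, one technical and one genuine. The technical one: $L_\sigma$ is \emph{not} a bounded operator from $Y^s_T$ into $L^2(0,T;H^{s-1})$ --- $Y^s_T$ gives no control whatsoever on $y_t$, and $y_{xxx}$ maps $L^2(0,T;H^{s+1})$ only into $L^2(0,T;H^{s-2})$ --- so the method of continuity must be run on the graph space $\{y\in Y^s_T:\ y_t+y_{xxx}-\nu_0y_{xx}\in L^2(0,T;H^{s-1})\}$ equipped with its graph norm; this is a routine repair. The genuine one concerns $1\le s\le 3$: differentiating the equation in $x$ destroys the mixed boundary conditions. If $u=y_x$, then only $u(0,t)=u(L,t)$ is known, while nothing is known about $y_{xx}$ and $y_{xxx}$ at the endpoints; hence in $\int_0^L u_{xxx}u\,dx=\bigl[u_{xx}u-\tfrac12 u_x^2\bigr]_0^L$ the boundary contribution $y_x(L,t)\bigl(y_{xxx}(L,t)-y_{xxx}(0,t)\bigr)-\tfrac12\bigl(y_{xx}^2(L,t)-y_{xx}^2(0,t)\bigr)$ survives (and similarly for the diffusion term), so the higher--order energy estimate does not close. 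This is precisely why Proposition \ref{wp-full.linear.s} obtains $H^{s+1}$ regularity for the constant--coefficient problem not by applying $\partial_x$ but by applying $A$ itself (setting $v=Au$, which satisfies the same boundary conditions) and then interpolating; that device does not transfer to $L_\sigma$ because $A$ does not commute with multiplication by $\overline{y}$, and the commutator involves derivatives of $\overline{y}y$ beyond the reach of $Y^s_T$ under \eqref{intro.regularity.trajectory}. To complete your argument for $s\ge 1$ you need either $A$--based a priori estimates with a careful treatment of these commutators (and compatibility conditions on $y_0$), or a hybrid: since $\tilde\nu(t)\partial_x^2$ \emph{does} commute with $A$, extend Proposition \ref{wp-full.linear.s} to the principal part $\partial_t+\partial_x^3-\nu(t)\partial_x^2$ first, and then treat only the convection $(\overline{y}y)_x$ perturbatively, where short--time smallness is actually available.
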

  
  \begin{proof}
	The proof is developed for the case $s=0$. Similar arguments allow to extend this result for 
	$0<s\leq 3$. Let us consider $R>0$ and $0<\theta\leq\min\{1, T\}$ two appropriate constants 
	to be determined. Let 
	$B_{\theta,R}:=\{v\in Y^0_\theta: \|v\|_{Y^0_\theta}\leq R\}$ and define a map 
	$\Lambda:B_{\theta,R}\rightarrow B_{\theta,R}$ by $\Lambda(v)=y$, where $y$ is the unique solution of 
	\begin{equation*}\label{aux.system}
    \left\{
    \begin{array}{llll}
        y_t+y_{xxx}-\nu_0y_{xx}=\tilde\nu(t)v_{xx}+(\overline{y}v)_{x} &\mbox{in} &Q,\\
        y(0,t)=y(L,t)=0  &\mbox{on}& (0,T),\\
        y_x(0,t)=y_x(L,t)  &\mbox{on}& (0,T),\\
        y(\cdot,0)=y_0(\cdot) &\mbox{in}& (0,L).        
    \end{array}\right.
	\end{equation*}
	Obviously,
	\begin{equation*}\label{formula.variation.linearized}
		\Lambda(v)=S(t)y_0+\int\limits_0^tS(t-\tau)[\tilde\nu v_{xx}+(\overline{y}v)_{x}](\tau)\,d\tau.
	\end{equation*}	
	From the above representation, Proposition \ref{wp-full.linear.s} and  Lemma \ref{lema.regularity.trajectory.nonlinearterm},
	there exist positive constants $C_1,C_2$ such that
	\begin{equation}\label{estimate.map.contraction1-1}
		\|\Lambda(v)\|_{Y^0_\theta}\leq C_1\|y_0\|_{L^2(0,L)}+C_2\theta^{1/2}(\|\tilde\nu\|_{L^\infty(0,T)}
		+\|\overline{y}\|_{Y^0_T})\|v\|_{Y^0_\theta}.	
	\end{equation}
	Choose $R>0$ and $T^*=\theta$ such that
	\begin{equation*}\label{choose.theta.R-case1}
		R:=m_0C_1\|y_0\|_{L^2(0,L)}\quad\mbox{and}\quad 
		C_2T^{*1/2}(\|\tilde\nu\|_{L^\infty(0,T)}+\|\overline{y}\|_{Y^0_T})\leq\frac{1}{2n_0},\quad\forall 
		m_0,n_0\geq 2.  
	\end{equation*}
	Then, by \eqref{estimate.map.contraction1-1} we have that $\|\Lambda(v)\|_{Y^0_{T^*}}\leq R$. Furthermore,
	for every $u,v\in B_{T^*,R}$,
	 \begin{equation*}
	\begin{aligned}
        \|\Lambda(v)-\Lambda(u)\|_{Y^0_{T^*}}&\leq C_2T^{*1/2}\|\tilde\nu(v_{xx}-u_{xx})+(\overline{y}(v-u))_{x}
        \|_{L^2(0,T^*;H^{-1}(0,L))}\\
        &\leq \frac{1}{n_0}\|v-u\|_{Y^0_{T^*}}.
    \end{aligned}	
	\end{equation*}
	Therefore, $\Lambda$ is a contraction mapping on $B_{T^*,R}$ and it has a unique fixed point $u\in Y^0_{T^*}$ which is 
	the solution to the linearized problem \eqref{wp-full.linear.s} in $(0,T^*)$. Finally, from \eqref{aux.system}--\eqref{choose.theta.R-case1} 
	we can observe that $T^*\in (0,T)$ is independent on $\|y_0\|_{L^2(0,L)}$,  it implies that 
	the previous arguments can be extended on intervals $(T^*,2T^*], (2T^*,3T^*],\dots, ((n-1)T^*,nT^*=T]$. Therefore, 
	the existence of a unique solution
	of \eqref{wp.main.linear.system} in  $(0,T)$ is guaranteed. This completes the proof of Proposition
	\ref{wp.regularity-linearized-system}. 
	\end{proof}
	
	\begin{Obs}
	As consequence of Proposition \ref{wp.regularity-linearized-system} and Proposition \ref{wp-full.linear.s}, 
	for any trajectory $\overline{y}\in Y_T^s$, the solution $y$ of \eqref{wp.main.linear.system} satisfies
	\begin{equation}\label{ine.regularity.linearized.system}
		\|y\|_{Y^s_{T}}
		\leq C\Bigl(\|f\|_{ L^2(0,T;H^{s-1}(0,L))}+\|y_0\|_{H^{s}(0,L)}\Bigr),
    \end{equation}
    for some positive constant $C$. 
    \end{Obs} 
  	   
\subsection{\normalsize{Nonlinear case}}\label{section.wellposedness.nl}

	\quad
	In this subsection we  turn to consider  the following   nonlinear initial boundary value problem (IBVP):
	\begin{equation}\label{sys.wellposedness-nonlinear}
  		\left\{
   		\begin{array}{llll}
        y_t+y_{xxx}-\nu(t)y_{xx}+yy_x=0 &\mbox{in} &(0,L)\times(0,+\infty),\\
        y(0,t)=y(L,t)=0  &\mbox{in}& (0,T),\\
         y_x(0,t)=y_x(L,t)  &\mbox{on}& (0,T),\\
        y(\cdot,0)=y_0(\cdot) &\mbox{in}& (0,L).       
    	\end{array}\right.
	\end{equation}
		
	\begin{proposition}\label{prop.local-well-posedness.nl}
		Let  $s\in [0,3]$ and $T>0$ be given. There exists $\delta>0$ such that for any $y_0\in H^s(0,L)$ satisfying 
		$\|y_0\|_{H^s(0,L)}\leq \delta$,
		the nonlinear system \eqref{sys.wellposedness-nonlinear} admits a unique solution $y\in Y^s_{T}$.	
	\end{proposition}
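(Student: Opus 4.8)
The plan is to recast \eqref{sys.wellposedness-nonlinear} as a fixed point problem on the full interval $[0,T]$, built on the linear theory already in hand and on the bilinear estimate of Lemma \ref{lema.regularity.trajectory.nonlinearterm}. I treat only the genuinely nonlinear term as a source and keep the whole diffusion $-\nu(t)y_{xx}$ inside the linear operator. Since $\overline{y}\equiv 0$ trivially satisfies \eqref{intro.regularity.trajectory}, the solution map of
\[
y_t+y_{xxx}-\nu(t)y_{xx}=f,\qquad y(\cdot,0)=y_0,
\]
which is exactly \eqref{wp.main.linear.system} with $\overline{y}=0$, is controlled by \eqref{ine.regularity.linearized.system}, i.e. there are constants $C_1,C_2>0$ (depending on $T,\nu_0,\|\tilde\nu\|_{L^\infty(0,T)},s,L$) with $\|y\|_{Y^s_T}\le C_1\|y_0\|_{H^s(0,L)}+C_2\|f\|_{L^2(0,T;H^{s-1}(0,L))}$.

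I would then define, on the ball $B_R:=\{w\in Y^s_T:\|w\|_{Y^s_T}\le R\}$, the map $\Gamma(w)=y$ where $y$ solves the above linear problem with source $f=-ww_x=-\frac{1}{2}(w^2)_x$. Writing the nonlinearity in divergence form and applying \eqref{regularity.nonlinearterm.wp} with $u=v=w$ gives $\|ww_x\|_{L^2(0,T;H^{s-1}(0,L))}\le C\|w\|_{Y^s_T}^2$, so that
\[
\|\Gamma(w)\|_{Y^s_T}\le C_1\|y_0\|_{H^s(0,L)}+C_2\|w\|_{Y^s_T}^2\le C_1\delta+C_2R^2 .
\]
For $w_1,w_2\in B_R$ the difference $\Gamma(w_1)-\Gamma(w_2)$ solves the same linear problem with zero initial datum and source $-\frac{1}{2}\big((w_1+w_2)(w_1-w_2)\big)_x$, so \eqref{regularity.nonlinearterm.wp} yields $\|\Gamma(w_1)-\Gamma(w_2)\|_{Y^s_T}\le 2C_2R\,\|w_1-w_2\|_{Y^s_T}$.

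It then remains to fix the parameters. Choosing $R:=2C_1\delta$ and requiring $\delta\le(8C_1C_2)^{-1}$ forces $C_1\delta+C_2R^2\le R$ and $2C_2R\le\frac{1}{2}<1$, so $\Gamma$ maps $B_R$ into itself and is a contraction there; Banach's fixed point theorem furnishes a unique $y\in B_R\subset Y^s_T$ solving \eqref{sys.wellposedness-nonlinear}. Uniqueness in all of $Y^s_T$ follows by a standard argument: two solutions issued from the same datum have a difference obeying the same linear problem, and the bilinear estimate on a sufficiently short subinterval forces them to coincide there, after which one propagates the identity step by step across $[0,T]$. The case $s=0$ is carried out in detail, and the cases $0<s\le 3$ are identical once \eqref{regularity.nonlinearterm.wp} and \eqref{ine.regularity.linearized.system} are invoked at the corresponding regularity.

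The point deserving the most care is conceptual rather than computational. Because $\tilde\nu(t)$ multiplies the highest surviving derivative $y_{xx}$, its contribution is not small on a fixed interval and cannot be treated perturbatively as a source; this is why it must be absorbed into the linear flow through \eqref{ine.regularity.linearized.system}, whose validity on all of $[0,T]$ was established precisely for this purpose. Once the diffusion sits in the linear part, the only smallness needed to close the contraction comes from the quadratic term, and it is supplied by the $L^2(0,T;H^{s+1})$-smoothing encoded in the norm of $Y^s_T$; the entire argument then runs on $[0,T]$ at once, with $\delta>0$ depending on $T$.
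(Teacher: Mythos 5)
Your proposal is correct and follows essentially the same route as the paper: a Banach fixed point argument on a ball in $Y^s_T$ over the whole interval $[0,T]$, with the time-dependent diffusion kept in the linear solver (estimate \eqref{ine.regularity.linearized.system}) and only the quadratic term $ww_x$ treated as a source via the bilinear estimate \eqref{regularity.nonlinearterm.wp}, closing the contraction by taking $R$ proportional to $\delta$ and $\delta$ small. Your added remark on uniqueness in all of $Y^s_T$ (not just in the ball) is a minor strengthening the paper omits, but it does not change the substance of the argument.
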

	
	\begin{proof}
		The proof follows the same scheme of the linear case. In fact, let $R>0$ be  an appropriate constant 
		to be determined. Again, we consider a map $\Lambda: B_{R}\subset Y_T^s\rightarrow Y_T^s$ by 
		$\Lambda(v)=y$ where  $y$ solves 
		\begin{equation*}\label{au.system.nonlinearcase}
  		\left\{
   		\begin{array}{llll}
        y_t+y_{xxx}-\nu(t)y_{xx}=vv_x&\mbox{in} &Q,\\
        y(0,t)=y(L,t)=0  &\mbox{in}& (0,T),\\
         y_x(0,t)=y_x(L,t)  &\mbox{on}& (0,T),\\
        y(\cdot,0)=y_0(\cdot) &\mbox{in}& (0,L).       
    	\end{array}\right.
	\end{equation*}
	In this case, 
	\begin{equation*}\label{formula.variation.linearized2}
		\Lambda(v)=S(t)y_0+\int\limits_0^tS(t-\tau)(vv_{x})(\tau)\,d\tau.
	\end{equation*}	
	Using Proposition \ref{wp-full.linear.s}, Lemma \ref{lema.regularity.trajectory.nonlinearterm} and 
	\eqref{ine.regularity.linearized.system},
	there exist positive constants $C_3,C_4$ such that
	\begin{equation}\label{estimate.map.contraction1}
		\|\Lambda(v)\|_{Y^s_T}\leq C_3\|y_0\|_{H^{s}(0,L)}+C_4\|v\|^2_{Y^s_T}.	
	\end{equation}
	Consider  $R>0$ such that
	\begin{equation}\label{choose.theta.R-case2}
		R:=m_0C_3\|y_0\|_{H^{s}(0,L)}\quad\mbox{and}\quad 
		C_4R\leq\frac{1}{2n_0},\quad\forall 
		m_0,n_0\geq 2.  
	\end{equation}
	From  \eqref{choose.theta.R-case2}, it is enough to define $\delta:=(2m_0n_0C_3C_4)^{-1}$. Then, 
	by \eqref{estimate.map.contraction1} we have that $\|\Lambda(v)\|_{Y^s_{T}}\leq R$. Furthermore,
	for every $u,v\in B_{R}$,
	 \begin{equation*}
	\begin{aligned}
        \|\Lambda(v)-\Lambda(u)\|_{Y^s_{T}}&\leq C_4\|uu_x-vv_x\|_{L^2(0,T;H^{s-1}(0,L))}\\
        &\leq C_4(\|u\|_{Y^s_{T}}+\|v\|_{Y^s_{T}})\|u-v\|_{Y^s_{T}}\\
        &\leq \frac{1}{n_0}\|v-u\|_{Y^s_{T}}.
    \end{aligned}	
	\end{equation*}
	Therefore $\Lambda $ is a contraction mapping on $B_{R}$ and it has a unique fixed point $u\in Y^s_{T}$ which is
	the solution of \eqref{sys.wellposedness-nonlinear}.
	\end{proof}

\section{\normalsize{Carleman inequality}}\label{section.carleman}

   \quad
    In this section we will prove the Carleman estimate given in Theorem \ref{intro.prop.carleman}. To do
    this, we introduce weight functions defined as follows.
      Let $\omega$ be a nonempty open subset of $(0,L)$ and $\phi$  a positive function in $[0,L]$ such that
    	 $\phi\in C^4([0,L])$ and satisfies
	\begin{equation}\label{carl.weight1.1}
    	\phi(0)=\phi(L),\quad \phi'(0)<0,\,\,\, \phi'(L)>0,
    	\quad |\phi'(0)|=|\phi'(L)|,
	\end{equation} 
	\begin{equation}\label{carl.weight1.2}
   	 \phi''< 0\,\,\,\mbox{ in }\,\,
   	 \overline{(0,L)\backslash\omega}.    
	\end{equation}     
    Then, we consider the weight functions
	\begin{equation}\label{carl.def.weights}
    \begin{aligned}
    	\alpha(x,t):=\phi(x)\xi(t),\quad \xi(t):=\displaystyle\frac{1}{t^2(T-t)^2},\\
    	\hat\alpha(t):=\max\limits_{x\in [0,L]}\alpha(x,t),\quad 
    	\breve\alpha(t):=\min\limits_{x\in [0,L]}\alpha(x,t),\quad
    	2\hat\alpha(t)<3\breve\alpha(t).
    \end{aligned}
	\end{equation}
	Assume $\omega:=(\ell_1,\ell_2)\subset (0,L)$. It is easy to verify that $\varphi$ defined as follows satisfies \eqref{carl.weight1.1} and  \eqref{carl.weight1.2}:
	\begin{equation*}\varphi(x):=
   	\left\{
    \begin{array}{lll}
        \varepsilon x^3-3\ell_1 x^2-x+C_1 &\mbox{if} &x\in [0,\ell_1],\\
        -\varepsilon x^3+(1+3\varepsilon L^2)x+C_2 &\mbox{if} &x\in [\ell_2,L],
     \end{array}\right.
   	\end{equation*}
	where $C_1=2\varepsilon L^3+L+C_2$ and $0<\varepsilon<1$ and $C_2\gg1$.

\begin{proof}\textit{Theorem \ref{intro.prop.carleman}.} For an easier comprehension, we divide the proof in 
    several steps:
    
	\noindent\textit{Step $1$. Decomposition of the solution.} In this step, we decompose the solution 
	$\varphi$ of \eqref{intro.adjointsys} in order to obtain $L^2$ regularity on the right--hand side of
	\eqref{intro.adjointsys}. In other words, let us introduce $z$ and $\psi$, the solutions of the following
	 systems 
	\begin{equation}\label{sys.for.z}
    \left\{
    \begin{array}{lll}
        -z_{t}-z_{xxx}-\nu(t)z_{xx}-\overline{y}z_x=-(\rho_0)_t\varphi &\mbox{in} &Q,\\
        z(0,t)=z(L,t)=0  &\mbox{on}& (0,T),\\
        z_x(0,t)=z_x(L,t)  &\mbox{on}& (0,T),\\
        z(\cdot,T)=0 &\mbox{in}& (0,L).        
    \end{array}\right.
	\end{equation} 
	and 
	\begin{equation}\label{sys.for.psi}
    \left\{
    \begin{array}{lll}
        -\psi_t-\psi_{xxx}-\nu(t)\psi_{xx}-\overline{y}\psi_x=-\rho_0 g\ &\mbox{in} &Q,\\
        \psi(0,t)=\psi(L,t)=0  &\mbox{on}& (0,T),\\
        \psi_x(0,t)=\psi_x(L,t)  &\mbox{on}& (0,T),\\
        \psi(\cdot,T)=0 &\mbox{in}& (0,L),        
    \end{array}\right.
	\end{equation}  
    where $\rho_0(t)=e^{-s\hat\alpha}$. By uniqueness for the linear KdVB equation, we have
    \begin{equation}\label{carl.decomposition}
    	\rho_0\varphi=z+\psi.	
    \end{equation}
  	The rest of the proof consists in making a Carleman inequality for the system \eqref{sys.for.z},
  	meanwhile, for the system \eqref{sys.for.psi} we will use the regularity result 
  	\eqref{ine.regularity.linearized.system},
  	namely
  	\begin{equation}\label{car.regularityL2.psi}
  		\|\psi\|^2_{L^2(0,T;H^2(0,L))}\leq C\|\rho_0 g\|^2_{L^2(Q)}.	
  	\end{equation}
	\noindent\textit{Step $2$. Change of variables and decomposition of a special operator.}  In this step, 
	we consider the differential operator satisfied by a new variable $w$, which will be $z$ up to a weight
	function. More precisely, let $w=e^{-s\alpha}z$ and $G=e^{-s\alpha}(-(\rho_0)_t\varphi+\overline{y}z_x)$.
	Then, if $L$ is the 
    operator defined by $L:=\partial_t+\partial_{xxx}+\nu(t)\partial_{xx}$, the identity 
    $e^{-s\alpha}L(e^{s\alpha}w)=-G$ is equivalent to:
    $$L_1w+L_2w=F_s$$
    where 
    \begin{equation}\label{carl.proof.decompositionoperatorL}
    \begin{array}{lll}
        L_1w:=& w_t+w_{www}+3s^2(\alpha_x)^2w_x\\
        L_2w:=& 3s(\alpha_x)w_{xx}+s^3(\alpha_x)^3w+3s(\alpha_{xx})w_x\\
    \end{array}        
    \end{equation}
     and
    \begin{equation}\label{carl.proof.decompositionoperator.rhs}
        F_s=-G-R_s,
    \end{equation}
    with 
    $$R_s:= \nu(t)s\alpha_{xx}w+s\alpha_t w+s\alpha_{xxx}w+3s^2\alpha_{xx}\alpha_{xxx}w+s\alpha_x w+w_x
        -\nu(t)(2s\alpha_x w_x-w_{xx}-s^2\alpha_{x}^2w).$$
    Therefore, 
    \begin{equation}\label{carl.proof.identity.l2}
        \|L_1w\|^2_{L^2(Q)}+\|L_2w\|^2_{L^2(Q)}+2\langle L_1w, L_2w\rangle=\|G+R_s\|^2_{L^2(Q)},
    \end{equation}
    where $\langle\cdot,\cdot\rangle$ is the $L^2(Q)$ inner product. In the next step, we will estimate the terms
    that arise of the inner product $\langle L_1w, L_2w\rangle$. This will give an inequality with global terms 
    on the left--hand side, meanwhile the local terms will appear on the right--hand side. Finally, after
	returning to the principal variable $z$, the local terms will be estimate using bootstrap arguments based 
	on the smoothing of the KdVB equation.

      
	\noindent\textit{Step $3$. First estimates.} In this step, we develop the nine terms appearing in 
    $\langle L_1w, L_2w\rangle$. Using integration by parts, we have:
    \begin{equation*}\label{carl.I_11}
    \begin{aligned}
        I^{1,1}&:=\langle L^1_1 w, L^1_2 w\rangle=3s\displaystyle\iint\limits_{Q}\alpha_x w_t w_{xx}dxdt\\
        &=-3s\displaystyle\iint\limits_{Q}\alpha_{xx}w_tw_xdxdt+\frac{3s}{2}\displaystyle\iint\limits_{Q}
        \alpha_{xt}|w_x|^2dxdt
        +\underbrace{3s\displaystyle\int\limits_{0}^T\Bigl(\alpha_{x}w_x w_t\Big|_{x=0}^{x=L}\Bigr)dt.
        }_{A}
    \end{aligned}
    \end{equation*}
    
    \begin{equation*}\label{I_12}
         I^{1,2}:=\langle L^1_1 w, L^2_2 w\rangle=s^3\displaystyle\iint\limits_{Q}(\alpha_x)^3w_t w dxdt
         =-\frac{3s^3}{2}\displaystyle\iint\limits_{Q}(\alpha_{x})^2\alpha_{xt}|w|^2dxdt.
    \end{equation*}
    
    \begin{equation}\label{carl.I_13}
    \begin{aligned}
        I^{1,3}:=\langle L^1_1 w, L^3_2 w\rangle &=3s\displaystyle\iint\limits_{Q}\alpha_{xx}w_x w_tdxdt\\
        &=-3s\displaystyle\iint\limits_{Q}\alpha_{xx}w_xw_{xt}dxdt
        -3s\displaystyle\iint\limits_{Q}\alpha_x w_{xx}w_tdxdt+A\\
        &=\frac{3s}{2}\displaystyle\iint\limits_{Q}\alpha_{xt}|w_x|^2dxdt+A-I^{1,1}
    \end{aligned}
    \end{equation}

    \begin{equation}\label{carl.I_21}
    \begin{aligned}
         I^{2,1}:=\langle L^2_1 w, L^1_2 w\rangle &=3s\displaystyle\iint\limits_{Q}\alpha_x w_{xx}w_{xxx}dxdt\\
         &=-\frac{3s}{2}\displaystyle\iint\limits_{Q}\alpha_{xx}|w_{xx}|^2dxdt
         +\underbrace{\frac{3s}{2}\displaystyle\int\limits_{0}^T\Bigl( \alpha_x|w_{xx}|^2\big|_{x=0}^{x=L}
         \Bigr)dt.}_{B}
    \end{aligned}
    \end{equation}
    
    \begin{equation}\label{carl.I_22}
    \begin{aligned}
         I^{2,2}&:=\langle L^2_1 w, L^2_2 w\rangle=s^3\displaystyle\iint\limits_{Q}(\alpha_x)^3ww_{xxx}\\
         &=-3s^3\displaystyle\iint\limits_{Q}(\alpha_{xx}\alpha_x)ww_{xx}dxdt
         -s^3\displaystyle\iint\limits_{Q}(\alpha_x)^3w_x w_{xx}dxdt+s^3\displaystyle\int\limits_{0}^T
         \Bigl((\alpha_x)^3ww_{xx}\Big|_{x=0}^{x=L}\Bigl)dt\\
         &=3s^3\displaystyle\iint\limits_{Q}[(\alpha_x)^2\alpha_{xx}]_{x}w w_xdxdt
         +3s^3\displaystyle\iint\limits_{Q}(\alpha_x)^2\alpha_{xx}|w_x|^2dxdt\\
         &\hspace{1cm}-3s^3\displaystyle\int\limits_{0}^T\Bigl( (\alpha_x)^2\alpha_{xx}|w_x|^2\Big|_{x=0}^{x=L}
         \Bigr)dt
         +\frac{s^3}{2}\displaystyle\iint\limits_{Q}[(\alpha_x)^3]_x|w_x|^2dxdt\\
         &\hspace{1cm}-\frac{s^3}{2}\displaystyle\int\limits_{0}^T\Bigl((\alpha_x)^3|w_x|^2\Big|_{x=0}^{x=L}
         \Bigr)dt
         +s^3\displaystyle\int\limits_{0}^T\Bigl((\alpha_x)^3w w_{xx}\Big|_{x=0}^{x=L}\Bigr)dt\\
         &= \frac{9s^3}{2}\displaystyle\iint\limits_{Q}(\alpha_x)^2\alpha_{xx}|w_x|^2dxdt
         -\frac{3s^3}{2}\displaystyle\iint\limits_{Q}[(\alpha_x)^2\alpha_{xx}]_{xx}|w|^2dxdt+\tilde{C},
    \end{aligned}
    \end{equation}
    where 
    \begin{equation*}\label{carl.I_22_C}
    \begin{aligned}
         \tilde{C}&:=\frac{3s^3}{2}\displaystyle\int\limits_{0}^T\Bigl([(\alpha)^2\alpha_{xx}]_{x}|w|^2\Big|_{x=0}^{x=L}
         \Bigr)dt
         -3s^3\displaystyle\int\limits_{0}^T\Bigl((\alpha_x)^2\alpha_{xx}|w|^2\Big|_{x=0}^{x=L}\Bigr)dt
         -\frac{s^3}{2}\displaystyle\int\limits_{0}^T\Bigl((\alpha_x)^3|w_x|^2\Big|_{x=0}^{x=L}\Bigr)dt\\
         & \hspace{1cm}+s^3\displaystyle\int\limits_{0}^T\Bigl((\alpha_x)^3w w_{xx}\Big|_{x=0}^{x=L}\Bigr)dt.
    \end{aligned}
    \end{equation*}
    
    \begin{equation}\label{carl.I_23}
    \begin{aligned}
        I^{2,3}&:=\langle L^2_1 w, L^3_2 w\rangle=3s\displaystyle\iint\limits_{Q}\alpha_{xx}w_x w_{xx}dxdt\\
        &=-3s\displaystyle\iint\limits_{Q}\alpha_{xxx}w_xw_{xx}dxdt
        -3s\displaystyle\iint\limits_{Q}\alpha_{xx}|w_{xx}|^2dxdt
        +3s\displaystyle\int\limits_{0}^T\Bigl( \alpha_{xx}w_xw_{xx}\Big|_{x=0}^{x=L}\Bigl)dt\\
        &=-3s\displaystyle\iint\limits_{Q}\alpha_{xx}|w_{xx}|^2dxdt
        +\frac{3s}{2}\displaystyle\iint\limits_{Q}\alpha_{xxx}|w_x|^2dxdt+D,
    \end{aligned}
    \end{equation}
    where 
    \begin{equation*}\label{carl.I_23_D}
        D:=3s\displaystyle\int\limits_{0}^T\Bigl(\alpha_{xx}w_xw_{xx}\Big|_{x=0}^{x=L}\Bigr)dt
        -\frac{3s}{2} \displaystyle\int\limits_{0}^T\Bigl(  \alpha_{xxx}|w_x|^2\Big|_{x=0}^{x=L}\Bigr)dt.
    \end{equation*}
    
    \begin{equation}\label{carl.I_31}
    \begin{aligned}
        I^{3,1}&:=\langle L^3_1 w, L^1_2 w\rangle=9s^3\displaystyle\iint\limits_{Q}(\alpha_x)^3w_xw_{xx}dxdt\\
        &=-\frac{9s^3}{2}\displaystyle\iint\limits_{Q}[(\alpha_x)^3]_{x}|w_x|^2dxdt
        +\frac{9s^3}{2}\displaystyle\int\limits_{0}^T\Bigl( (\alpha_x)^3|w_x|^2\Big|_{x=0}^{x=L}\Bigr)dt\\
        &=-\frac{27s^3}{2}\displaystyle\iint\limits_{Q}(\alpha_x)^2\alpha_{xx}|w_x|^2dxdt
        +\underbrace{\frac{9s^3}{2}\displaystyle\int\limits_{0}^T\Bigl((\alpha_x)^3|w_x|^2\Big|_{x=0}^{x=L}
        \Bigr)dt}_{E}.
    \end{aligned}
    \end{equation}
    
    \begin{equation}\label{carl.I_32}
    \begin{aligned}
        I^{3,2}&:=\langle L^3_1 w, L^2_2 w\rangle=3s^5\displaystyle\iint\limits_{Q}(\alpha_x)^5w w_x dxdt\\
        &=-\frac{3s^5}{2}\displaystyle\iint\limits_{Q}[(\alpha_x)^5]_{x}|w|^2dxdt
        +\frac{3s^5}{2}\displaystyle\int\limits_{0}^T\Bigl((\alpha_x)^5|w|^2\Big|_{x=0}^{x=L}\Bigr)dt\\
        &=-\frac{15s^5}{2}\displaystyle\iint\limits_{Q}(\alpha_x)^4\alpha_{xx}|w|^2 dxdt
        +\underbrace{\frac{3s^5}{2}\displaystyle\int\limits_{0}^T\Bigl((\alpha_x)^5|w|^2\Big|_{x=0}^{x=L}\Bigr)dt}
        _{F}.   
    \end{aligned}
    \end{equation}
    
     \begin{equation}\label{carl.I_33}
        I^{3,3}:=\langle L^3_1 w, L^3_2 w\rangle=9s^3\displaystyle\iint\limits_{Q}(\alpha_x)^2\alpha_{xx}|w_x|^2
        dxdt.
    \end{equation}
	
	From \eqref{carl.I_22}, \eqref{carl.I_31} and \eqref{carl.I_33} we have that 
	\begin{equation*}
		I^{2,2}+I^{3,1}+I^{3,3}=
		-\frac{3s^3}{2}\displaystyle\iint\limits_{Q}[(\phi_x)^2\phi_{xx}]_{xx}\xi^3|w|^2 dxdt+\tilde{C}+E.
	\end{equation*}

	Now, taking into account the first boundary condition of \eqref{sys.for.z} and 
	 \eqref{carl.weight1.2}, the term $I^{3,2}$ can be estimated as follows:
	\begin{equation}\label{carl.estimate_w}
		Cs^5\displaystyle\iint\limits_{Q}\xi^5|w|^2 dxdt
		-Cs^5\displaystyle\iint\limits_{\omega\times(0,T)}\xi^5|w|^2dxdt
		\leq-\frac{15s^5}{2}\displaystyle\iint\limits_{Q}(\alpha_x)^4\alpha_{xx}|w|^2dxdt,
	\end{equation}
	for any $s\geq C(L,\omega, T)$.
	
	On the other hand, if $I^{2,1}_{1}$ and $I^{2,3}_1$ denote the first terms of \eqref{carl.I_21} and
	\eqref{carl.I_23}, respectively, then
	\begin{equation}\label{carl.estimate_wxx}
		I^{2,1}_{1}+I^{2,3}_1=-\frac{9s}{2}\displaystyle\int\limits_{Q}\phi_{xx}\xi|w_{xx}|^2dxdt
		\geq Cs\displaystyle\iint\limits_{Q}\xi|w_{xx}|^2dxdt
		-Cs\displaystyle\iint\limits_{\omega\times(0,T)}\xi|w_{xx}|^2dxdt.
	\end{equation}
	for any $s\geq C(L,\omega, T)$.
	
	Now, putting together the first term of $I^{2,2}$ (denoted by $I^{2,2}_{1}$) as well as the first term of
	$I^{3,1}$ (which is denoted by $I^{3,1}_{1}$) and $I^{3,3}$, we get 
	$$I^{2,2}_{1}+I^{3,1}_{1}+I^{3,3}=0.$$
	However, from \eqref{carl.estimate_w} and \eqref{carl.estimate_wxx} we also have (after integrating by parts
	and using Young's inequality) that
	\begin{equation}\label{carl.estimate_wx}
		s^3\displaystyle\iint\limits_{Q}\xi^3|w_{x}|^2dxdt
		\leq \displaystyle\iint\limits_{Q}(s^5\xi^5|w|^2 +s\xi|w_{xx}|^2)dxdt. 
	\end{equation}
	Thus, the first term of \eqref{carl.I_13} as well as the second term of \eqref{carl.I_23} can be estimated by
	the left--hand side of \eqref{carl.estimate_wx}.
	
	Then, putting together all the computations, we get the following inequality
	\begin{equation}\label{carl.fullestimate.1}
	\begin{aligned}
        &\displaystyle\iint\limits_{Q}[s^5\xi^5|w|^2+ s^3\xi^3|w_x|^2+s\xi|w_{xx}|^2]dxdt+A+B+\tilde{C}+D+E\\
        &\leq C\Bigl(\displaystyle\iint\limits_{\omega\times(0,T)}[(s\xi)^5|w|^2+s\xi|w_{xx}|^2]dxdt
        +\|G\|^2_{L^2(Q)}+\|R_s\|^2_{L^2(Q)}\Bigr),
    \end{aligned}	
	\end{equation}
	for any $s\geq C(L,\omega, T)$.

	Observe that the last term on the right--hand side \eqref{carl.fullestimate.1} can be absorbed by the 
	left--hand side for $s\geq C(L,\omega, T,\|\nu\|_{L^\infty(0,T)})$. Furthermore, taking into account that 
	$G=[-(\rho_0)_t\varphi+\overline{y}z_x]e^{-s\alpha},$
	we can estimate the term  
	$\overline{y}z_x$ by considering the identity 
	$w_x+s\alpha_{x}w=e^{-s\alpha}z_x$ and the inequality
	\begin{equation*}
		|\overline{y}z_x|^2e^{-2s\alpha}\leq Cs|\overline{y}w_x|^2
		+Cs^2(\alpha_{x})^2|\overline{y}w|^2.
	\end{equation*}
	From \eqref{carl.decomposition},\,\eqref{car.regularityL2.psi} and the estimate $|(\rho_0)_t\varphi|\leq Cs\xi^{3/2}|\rho_0\varphi|$,  
 	we readily have that there exists a positive constant 
	$C=C(L,\omega,T,\|\nu\|_{L^\infty(0,T)}, \|\overline{y}\|_{C(0,T;L^2(0,L))\cap L^2(0,T;H^1(0,L))})$ such that
	\begin{equation*}\label{carl.fullestimate.2}
	\begin{aligned}
        &\displaystyle\iint\limits_{Q}[s^5\xi^5|w|^2+ s^3\xi^3|w_x|^2+s\xi|w_{xx}|^2]dxdt+A+B+\tilde{C}+D+E\\
        &\leq C\Bigl(\displaystyle\iint\limits_{Q}|g|^2e^{-2s\hat\alpha}dxdt+
        \displaystyle\iint\limits_{\omega\times(0,T)}[(s\xi)^5|w|^2+s\xi|w_{xx}|^2]dxdt\Bigr),
    \end{aligned}	
	\end{equation*}
	for any $s\geq C$.
	
	Finally, using the weight functions defined in \eqref{carl.weight1.1} and \eqref{carl.weight1.2} we have the
	following estimates:
	
	$A=3s\displaystyle\int\limits_{0}^T[\alpha_{x}(L,t)w_x(L,t)w_t(L,t)-\alpha_{x}(0,t)w_x(0,t)w_t(0,t)]dt
	=0.$	
	
	$$B=\frac{3s}{2}\displaystyle\int\limits_{0}^T[\alpha_x(L,t)|w_{xx}(L,t)|^2-\alpha_{x}(0,t)|w_{xx}(0,t)|^2]dt
	\geq
	Cs\displaystyle\int\limits_{0}^T\xi(|w_{xx}(0,t)|^2+|w_{xx}(L,t)|^2)dt.$$
	
	\begin{equation*}
	\tilde{C}+E=4\displaystyle\int\limits_{0}^T[(\alpha_x(L,t))^3|w_{x}(L,t)|^2
	-(\alpha_{x}(0,t))^3|w_{x}(0,t)|^2]dt
	\geq Cs^3\displaystyle\int\limits_{0}^T\xi^3|w_{x}(L,t)|^2dt.
	\end{equation*}
	and
	\begin{equation*}
	\begin{aligned}
		D&=3s\displaystyle\int\limits_{0}^T[\alpha_{xx}(L,t)w_x(L,t)w_{xx}(L,t)
		-\alpha_{xx}(0,t)w_x(0,t)w_{xx}(0,t)]dt\\
        &\hspace{1cm}
        -\frac{3s}{2} \displaystyle\int\limits_{0}^T[\alpha_{xxx}(L,t)|w_x(L,t)|^2
        -\alpha_{xxx}(0,t)|w_x(0,t)|^2]dt\\
        &\leq Cs^2\displaystyle\int\limits_{0}^T\xi|w_x(L,t)|^2dt
        +C\displaystyle\int\limits_{0}^T\xi(|w_{xx}(0,t)|^2+|w_{xx}(L,t)|^2)dt.
     \end{aligned}
	\end{equation*}
	Therefore, at this moment we have the following inequality
	\begin{equation}\label{carl.fullestimate.3}
	\begin{aligned}
        \displaystyle\iint\limits_{Q}[s^5\xi^5|w|^2+&s^3\xi^3|w_x|^2+s\xi|w_{xx}|^2]dxdt
        +s^3\displaystyle\int\limits_{0}^T\xi^3|w_{x}(L,t)|^2dt\\
        &+s\displaystyle\int\limits_{0}^T\xi(|w_{xx}(0,t)|^2+|w_{xx}(L,t)|^2)dt\\
        &\hspace{-2cm}\leq  C\Bigl(\displaystyle\iint\limits_{Q}|g|^2e^{-2s\hat\alpha}dxdt+
        \displaystyle\iint\limits_{\omega\times(0,T)}[(s\xi)^5|w|^2+s\xi|w_{xx}|^2]dxdt\Bigr),
    \end{aligned}	
	\end{equation}
	for any $s\geq C$.
	
 
\noindent\textit{Step $4$. Local estimates.} In this step, we turn back to our original function and use 
	bootstrap arguments as in \cite{capistrano2015internal} and \cite{guerrero2018local} to
	estimate the local term associated to $|w_{xx}|$.
	
	Recall that  $z=e^{s\alpha}w$. Then, a direct computation allow to obtain
	\begin{equation}
	|z_x|^2e^{-2s\alpha}\leq C(s^2\xi^2|w|^2+|w_x|^2)
	\end{equation}
 	and 
	\begin{equation}
	|z_{xx}|^2e^{-2s\alpha}\leq C(s^4\xi^4|w|^2+s^2\xi^2|w_x|^2+|w_{xx}|^2).
	\end{equation}
	On the other hand,
	\begin{equation}\label{aux.estimate.for.wxx}
	|w_{xx}|^2\leq Ce^{-2s\alpha}(s^4\xi^4|z|^2+s^2\xi^2|z_x|^2+|z_{xx}|^2)).
	\end{equation}	 
	From \eqref{aux.estimate.for.wxx} the local term given in \eqref{carl.fullestimate.3} can be written by
	\begin{equation}\label{carl.localterm}
	\displaystyle\iint\limits_{\omega\times(0,T)}[(s\xi)^5|z|^2+s^3\xi^3|z_x|^2+s\xi|
	z_{xx}|^2]
	e^{-2s\alpha}dxdt.
	\end{equation}		
	In addition, the weight functions 
	$\hat\alpha,\breve{\alpha}, \xi$ and  \eqref{carl.fullestimate.3}--\eqref{carl.localterm} 
	allow us to deduce the following inequality
	\begin{equation}\label{carl.localterm1}
	\begin{aligned}
        \displaystyle\iint\limits_{Q}[s^5\xi^5|z|^2+&s^3\xi^3|z_x|^2+s\xi|z_{xx}|^2]
        e^{-2s\hat\alpha}dxdt
        \\
        &\hspace{-2cm}\leq  C\Bigl(\displaystyle\iint\limits_{Q}|g|^2e^{-2s\hat{\alpha}}dxdt+
        \displaystyle\iint\limits_{\omega\times(0,T)}
        [s^5\xi^5|z|^2+s^3\xi^3|z_x|^2+s\xi|z_{xx}|^2]e^{-2s\breve\alpha}dxdt\Bigr),
    \end{aligned}	
	\end{equation}
	for any $s\geq C$.
	
	Using that $H^1(\omega)=(H^3(\omega),L^2(\omega))_{2/3,2}$ and
	 $H^2(\omega)=(H^3(\omega),L^2(\omega))_{1/3,2}$, the last two terms in the right--hand side of 
	\eqref{carl.localterm1} can be upper bounded as follows: 
	\begin{equation*}
		s^3\displaystyle\iint\limits_{\omega\times(0,T)}\xi^3|z_x|^2 dxdt
		\leq \underbrace{s^3\displaystyle\int\limits_0^T
		\xi^3e^{-2s\breve{\alpha}}\|z\|^{4/3}_{L^2(\omega)}\|z\|^{2/3}_{H^3(\omega)} dt}_{J_1}
	\end{equation*}
	and 
	\begin{equation*}
	s\displaystyle\iint\limits_{\omega\times(0,T)}\xi|z_{xx}|^2 dxdt
	 \leq \underbrace{s\displaystyle\int\limits_0^T
	 \xi e^{-2s\breve{\alpha}}\|z\|^{2/3}_{L^2(\omega)}\|z\|^{4/3}_{H^3(\omega)}dt.}_{J_2} 
	\end{equation*}
	Now, applying Young's inequality
	\begin{equation*}
		J_1\leq C(\varepsilon)s^{11/2}\displaystyle\int\limits_0^T\xi^{11/2}e^{-3s\breve{\alpha}
		+s\hat{\alpha}}
		\|z\|^2_{L^2(\omega)}dt 
		+\varepsilon s^{-2}\displaystyle\int\limits_0^T\xi^{-2}e^{-2s\hat{\alpha}}
		\|z\|^2_{H^3(\omega)}dt
	\end{equation*}	   
	and 
	\begin{equation*}
		J_2\leq C(\varepsilon)s^{9}\displaystyle\int\limits_0^T\xi^{9}e^{-6s\breve{\alpha}+4s\hat{\alpha}}
		\|z\|^2_{L^2(\omega)}dt 
		+\varepsilon s^{-3}\displaystyle\int\limits_0^T\xi^{-3}e^{-2s\hat{\alpha}}
		\|z\|^2_{H^3(\omega)}dt,
	\end{equation*}
	for any $\varepsilon>0$.
	
	Putting together \eqref{carl.localterm1} and the previous estimates, we have  
	\begin{equation}\label{carl.localterm2}
	\begin{aligned}
        \displaystyle\iint\limits_{Q}[s^5\xi^5|z|^2+&s^3\xi^3|z_x|^2+s\xi|z_{xx}|^2]
        e^{-2s\hat\alpha}dxdt
        \\
        &\hspace{-2cm}\leq  C\displaystyle\iint\limits_{Q}|g|^2e^{-2s\hat{\alpha}}dxdt+
        Cs^9\displaystyle\iint\limits_{\omega\times(0,T)}\xi^{9}e^{-6s\breve{\alpha}
        +4s\hat{\alpha}}|z|^2dxdt\\
		 &+\varepsilon\Biggl( s^{-2}\displaystyle\int\limits_0^T\xi^{-2}e^{-2s\hat{\alpha}}
		\|z\|^2_{H^3(\omega)}dt+s^{-3}\displaystyle\int\limits_0^T\xi^{-3}e^{-2s\hat{\alpha}}
		\|z\|^2_{H^3(\omega)}dt\Biggr),
    \end{aligned}	
	\end{equation}
	for any $s\geq C$.
	
	Finally, in order to estimate the associated terms to $\|z\|^2_{H^3(\omega)}$, we will use a
	bootstrap argument based on the smoothing effect of the KdVB equation.
	\noindent
	Let us star by defining $\tilde z:=\tilde\rho(t)z$ with 
	$\tilde\rho(t):=s^{1/2}\xi e^{-s\hat{\alpha}}$. From \eqref{sys.for.z}, we see that
	$\tilde z$ is the solution of the system
	\begin{equation}\label{carl.adjointsystem}
    \left\{
    \begin{array}{lll}
        -\tilde{z}_t-\tilde{z}_{xxx}-\nu(t)\tilde{z}_{xx}-\overline{y}\tilde{z}_x
        =\tilde\rho(\rho_0)_t\varphi -\tilde\rho_t z 
        &\mbox{in} &Q,\\
        \tilde{z}(0,t)=\tilde{z}(L,t)=0  &\mbox{on}& (0,T),\\
        \tilde{z}_x(0,t)=\tilde{z}_x(L,t)  &\mbox{on}& (0,T),\\
        \tilde{z}(\cdot,T)=0 &\mbox{in}& (0,L).        
    \end{array}\right.
	\end{equation}
	Taking into account the estimates $|\tilde\rho_t|\leq Cs^{3/2}\xi^{5/2}e^{-s\hat\alpha}, \,\, 
	|(\rho_0)_t|\leq Cs\xi^{3/2}e^{-s\hat\alpha}$, and the 
	regularity result \eqref{ine.regularity.linearized.system}, we can deduce that
	\begin{equation}\label{carl.local.regularity.l2}
		\|\tilde{z}\|^2_{L^2(0,T;H^2(\Omega))}
		\leq C\Bigl(\|s^{3/2}\xi^{5/2}e^{-s\hat\alpha}z\|^2_{L^2(Q)}+\|s^{3/2}\xi^{5/2}e^{-2s\hat\alpha}
		\varphi\|^2_{L^2(Q)}\Bigr).
	\end{equation}   
	The fact that $s^{3/2}\xi^{5/2}e^{-s\hat\alpha}$ is bounded allows us 
	to use \eqref{car.regularityL2.psi} and conclude that $\|\tilde z\|^2_{L^2(0,T;H^2(\Omega))}$ is bounded by 
	the left--hand side of 
	\eqref{carl.localterm2} and $\|\rho_0 g\|^2_{L^2(Q)}$.
	\skip 0,3cm
	Now, we define
	\begin{equation*}
		\hat z:=\hat\rho(t) z\quad\mbox{with}\quad \hat\rho(t):=s^{-1/2}\xi^{-1/2}e^{-s\hat\alpha}.
	\end{equation*}
	It is easy to see that $\hat z$ is the solution of \eqref{carl.adjointsystem} with 
	$\tilde\rho$ replaced by $\hat\rho$. Besides, from \eqref{ine.regularity.linearized.system} 
	we get 
	\begin{equation}\label{carl.localterm.regularity.h1}
		\|\hat z\|^2_{L^2(0,T;H^3(\Omega))}
		\leq C\Bigl(\|s^{1/2}\xi e^{-s\hat\alpha} z\|^2_{L^2((0,T);H^1(\Omega))}
		+\|s^{1/2}\xi e^{-2s\hat\alpha}\varphi\|^2_{L^2(0,T;H^1(\Omega))}\Bigr).		
	\end{equation}
 	Arguing as before, $\|\hat z\|^2_{L^2(0,T;H^3(\Omega))}$ is bounded by the 
 	left--hand side of \eqref{carl.localterm2} and $\|\rho_0 g\|^2_{L^2(Q)}$.\\
 	By combining \eqref{carl.localterm2}, \eqref{carl.local.regularity.l2} and 
 	\eqref{carl.localterm.regularity.h1}, we obtain in particular
 	\begin{equation}\label{carl.localterm3}
 	\begin{aligned}
    	\displaystyle\iint\limits_{Q}[s^5\xi^5|z|^2+&s^3\xi^3|z_x|^2+s\xi|z_{xx}|^2]
        e^{-2s\hat\alpha}dxdt
        +\|s^{-1/2}\xi^{-1/2}e^{-s\hat\alpha}z\|^2_{L^2(0,T;H^3(\Omega))}
        \\
        &\hspace{-2cm}\leq  C\Bigl(\displaystyle\iint\limits_{Q}|g|^2e^{-2s\hat{\alpha}}dxdt+
        s^9\displaystyle\iint\limits_{\omega\times(0,T)}\xi^{9}e^{-6s\breve{\alpha}
        +4s\hat{\alpha}}|z|^2 dxdt\Bigr)\\
		&\hspace{-1cm}+\varepsilon\Biggl( s^{-2}\displaystyle\int\limits_0^T\xi^{-2}e^{-2s\hat{\alpha}}
		\|z\|^2_{H^3(\omega)}dt+s^{-3}\displaystyle\int\limits_0^T\xi^{-3}e^{-2s\hat{\alpha}}
		\|z\|^2_{H^3(\omega)}dt\Biggr),
    \end{aligned}	
 	\end{equation}
	for any $\varepsilon>0$.
	\skip 0,2cm \noindent
	For $\varepsilon$ small enough, the last two terms in the right-hand side of \eqref{carl.localterm3}
	can be absorbed by the left--hand side. By returning  to the variable $\varphi$  the proof of 
	Theorem \ref{intro.prop.carleman} is ended. 
\end{proof}	
\section{\normalsize{Null controllability of the linearized system}}\label{section.null.controllability.linear}

	\quad
	In this section we will prove the null controllability for the system \eqref{intro.linearcontrol.sys}
	with a right--hand side which decays exponentially to zero when $t$ goes to $T$ \cite{fursikovimanuvilov}. In other words, we would
	like to find $v\in L^2(0,T;L^2(\Omega))$ such that the solution of    
	\begin{equation}\label{controllinear.mainsyst}
    \left\{
    \begin{array}{lll}
        y_t+y_{xxx}-\nu(t)y_{xx}+\overline{y}y_x+y\overline{y}_x=h+v1_{\omega\times(0,T)} &\mbox{in} &Q,\\
        y(0,t)=y(L,t)=0  &\mbox{on}& (0,T),\\
        y_x(0,t)=y_x(L,t)  &\mbox{on}& (0,T),\\
        y(\cdot,0)=y_0(\cdot) &\mbox{in}& (0,L),        
    \end{array}\right.
	\end{equation} satisfies
	\begin{equation}
		y(\cdot,T)=0\quad \mbox{in}\,\,(0,L)	,
	\end{equation}
 	where the function $h$ is in an appropriate weighted space. Before proving this results, we establish
 	a Carleman inequality with weight functions not vanishing in $t=0$. To do this, let 
	$\ell(t)\in C^1([0,T])$ be a positive
    function in $[0,T)$ such that $\ell(t)=T^2/4$ for all $t\in [0,T/4]$ and $\ell(t)=t(T-t)$ for all 
    $t\in [T/2,T]$. We introduce the following weight functions:	
	\begin{equation}\label{linearcontrol.carleman.weights}
    \begin{aligned}
    &\beta(x,t) = \phi(x)\tau(t), 
    \quad \tau(t)=\dfrac{1}{\ell^{2}(t)},\\
    &\widehat\beta(t) = \max_{x\in[0,L]} \beta(x,t),\quad 
    \quad \,\,\,\,\breve\beta(t) = \min_{x\in [0,L]} \beta(x,t).
    \end{aligned}
	\end{equation}
	\begin{lema}\label{lema.weight.carleman2}
	There exist positive constants $s, C$ with $C$ depending on $s,\|\nu\|_{L^\infty(0,T)},\omega,T$ such that every
	solution of \eqref{intro.adjointsys} verifies
	\begin{equation}\label{nullcontrol.ine.carleman}
	\begin{aligned}
        \displaystyle\iint\limits_{Q}[\tau^5|\varphi|^2+&\tau^3|\varphi_x|^2+\tau|\varphi_{xx}|^2]
        e^{-4s\hat\beta}dxdt+\|\varphi(0)\|^2_{L^2(0,L)}
        \\
        &\hspace{-2cm}\leq  C\Bigl(\displaystyle\iint\limits_{Q}|g|^2e^{-2s\hat{\beta}}dxdt+
        \displaystyle\iint\limits_{\omega\times(0,T)}\tau^{9}e^{-6s\breve{\beta}+2s\hat{\beta}}
		|\varphi|^2dxdt\Bigr).
	\end{aligned}	
	\end{equation}
	\end{lema}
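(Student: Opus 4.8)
The plan is to derive \eqref{nullcontrol.ine.carleman} from the global Carleman estimate \eqref{intro.ine.carleman1} of Theorem~\ref{intro.prop.carleman} by a time-splitting argument, using the fact that the modified weight in \eqref{linearcontrol.carleman.weights} is tailored so that $\ell(t)=t(T-t)$ on $[T/2,T]$ while $\ell(t)\equiv T^2/4$ on $[0,T/4]$. Consequently $\tau=\xi$ and $\beta=\alpha$ (hence $\widehat\beta=\hat\alpha$, $\breve\beta=\breve\alpha$) on $[T/2,T]$, whereas on $[0,T/2]$ the functions $\tau,\widehat\beta,\breve\beta$ stay bounded, so that the weights $e^{-4s\widehat\beta}$, $e^{-2s\widehat\beta}$ and $\tau^{9}e^{-6s\breve\beta+2s\widehat\beta}$ are bounded above and below by positive constants there. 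Since $\ell(t)\ge t(T-t)$ everywhere, we have $\tau\le\xi$, hence $e^{-2s\hat\alpha}\le e^{-2s\widehat\beta}$; and because $2\hat\alpha<3\breve\alpha$ by \eqref{carl.def.weights}, the observation weight $\xi^{9}e^{-6s\breve\alpha+2s\hat\alpha}$ decays as $t\to 0$ and is therefore dominated by $C\tau^{9}e^{-6s\breve\beta+2s\widehat\beta}$ on all of $Q$. These comparisons show that the right-hand side of \eqref{intro.ine.carleman1} is controlled by the right-hand side of \eqref{nullcontrol.ine.carleman}, so it suffices to estimate the left-hand side of \eqref{nullcontrol.ine.carleman}.

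On $(T/2,T)$ the integrand of \eqref{nullcontrol.ine.carleman} coincides, up to the fixed powers of $s$ absorbed into $C$, with that of \eqref{intro.ine.carleman1}, so this portion is already controlled by Theorem~\ref{intro.prop.carleman}. It remains to bound the contribution of $(0,T/2)$, where all weights are bounded, together with $\|\varphi(0)\|^2_{L^2(0,L)}$; equivalently, it suffices to bound $\int_0^{T/2}\|\varphi(\cdot,t)\|^2_{H^2(0,L)}\,dt+\|\varphi(0)\|^2_{L^2(0,L)}$ by the right-hand side of \eqref{nullcontrol.ine.carleman}.

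For the low-order part I would use a backward energy estimate. Multiplying \eqref{intro.adjointsys} by $\varphi$ and integrating over $(0,L)$, the dispersive term drops out thanks to $\varphi(0,t)=\varphi(L,t)=0$ and $\varphi_x(0,t)=\varphi_x(L,t)$, the diffusion contributes $+\nu(t)\|\varphi_x\|^2_{L^2}$, and the transport term integrates to $\tfrac12\int_0^L\overline{y}_x\varphi^2\,dx$, giving
\[
\frac{d}{dt}\|\varphi(\cdot,t)\|^2_{L^2}=2\nu(t)\|\varphi_x\|^2_{L^2}+\int_0^L\overline{y}_x\varphi^2\,dx-2\int_0^L g\varphi\,dx.
\]
The delicate point is the rough coefficient: since $\overline{y}\in L^2(0,T;H^1(0,L))$, the Gagliardo--Nirenberg inequality $\|\varphi\|^2_{L^4}\le C\|\varphi\|^{3/2}_{L^2}\|\varphi\|^{1/2}_{H^1}$ together with Young's inequality bounds $\big|\int_0^L\overline{y}_x\varphi^2\big|$ by $\nu(t)\|\varphi_x\|^2_{L^2}+C\|\overline{y}_x\|^{4/3}_{L^2}\|\varphi\|^2_{L^2}$, whose time coefficient lies in $L^1(0,T)$. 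Hence $\tfrac{d}{dt}\|\varphi\|^2_{L^2}\ge -C\|\overline{y}_x\|^{4/3}_{L^2}\|\varphi\|^2_{L^2}-\|g\|^2_{L^2}$, and Gronwall's lemma (integrated backward) gives, for $0\le t\le t'\le T/2$, the bound $\|\varphi(t)\|^2_{L^2}\le C\big(\|\varphi(t')\|^2_{L^2}+\int_0^{T/2}\|g\|^2_{L^2}\big)$; averaging $t'$ over $(T/4,T/2)$ controls $\|\varphi(0)\|^2_{L^2}$ and $\int_0^{T/2}\|\varphi\|^2_{L^2}$ by $\int_{T/4}^{T/2}\|\varphi\|^2_{L^2}\,dt$ plus the weighted $g$-integral. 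For the second derivatives I would localize in time with a cutoff $\chi\in C^\infty$, $\chi\equiv 1$ on $[0,T/4]$, $\chi\equiv 0$ on $[T/2,T]$: the product $\chi\varphi$ solves the adjoint system on $(0,T/2)$ with zero data at $t=T/2$ and source $\chi g-\chi_t\varphi\in L^2(0,T/2;L^2(0,L))$, so the regularity estimate \eqref{ine.regularity.linearized.system} with $s=1$ (applied to the time-reversed adjoint, of the same type) yields $\int_0^{T/4}\|\varphi\|^2_{H^2}\le\|\chi\varphi\|^2_{L^2(0,T/2;H^2)}\le C\big(\int_0^{T/2}\|g\|^2_{L^2}+\int_{T/4}^{T/2}\|\varphi\|^2_{L^2}\big)$, since $\chi_t$ is supported in $[T/4,T/2]$. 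Finally, every $\int_{T/4}^{T/2}\|\varphi\|^2_{L^2}\,dt$ appearing above is majorized by the left-hand side of \eqref{intro.ine.carleman1}, because on $[T/4,3T/4]$ the weight $e^{-4s\hat\alpha}$ and the factor $s^5\xi^5$ are bounded below by positive constants, and by Theorem~\ref{intro.prop.carleman} this is in turn bounded by the right-hand side of \eqref{nullcontrol.ine.carleman}. Collecting the three contributions proves \eqref{nullcontrol.ine.carleman}.

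The main obstacle is the backward energy estimate with the rough transport coefficient $\overline{y}$: one must absorb $\int_0^L\overline{y}_x\varphi^2$ into the diffusion via Gagliardo--Nirenberg so that only an $L^1(0,T)$ time coefficient is left for Gronwall. The remaining weight bookkeeping ($\tau\le\xi$, exponential dominance via $2\hat\alpha<3\breve\alpha$, and the equivalence of the weights on $[T/2,T]$) is routine but must be carried out carefully to ensure that the right-hand side of \eqref{intro.ine.carleman1} is absorbed by that of \eqref{nullcontrol.ine.carleman}.
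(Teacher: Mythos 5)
Your proposal follows essentially the same route as the paper's proof: split time at $T/2$; use Theorem \ref{intro.prop.carleman} on $(T/2,T)$, where $\beta=\alpha$ and $\tau=\xi$; dominate the old right-hand-side weights by the new ones using their boundedness (above and below) on $[0,T/2]$; and control the remaining contribution on $(0,T/2)$ together with $\|\varphi(0)\|^2_{L^2(0,L)}$ by a time cutoff plus energy/regularity estimates for the adjoint system, absorbing the resulting error terms through the Carleman estimate on an intermediate interval. Your explicit backward Gronwall argument with Gagliardo--Nirenberg for the transport term $\int_0^L\overline{y}_x\varphi^2\,dx$ is a sound way to justify what the paper dismisses as ``classical energy estimates,'' and the use of \eqref{ine.regularity.linearized.system} with $s=1$ on the (time/space-reversed) adjoint matches the paper's own practice.

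One bookkeeping item is left open. Your cutoff $\chi$ equals $1$ only on $[0,T/4]$, so your three contributions control $\|\varphi(0)\|^2_{L^2(0,L)}$, $\int_0^{T/2}\|\varphi\|^2_{L^2}\,dt$ and $\int_0^{T/4}\|\varphi\|^2_{H^2}\,dt$, but not the derivative terms $\int_{T/4}^{T/2}\bigl(\|\varphi_x\|^2_{L^2}+\|\varphi_{xx}\|^2_{L^2}\bigr)dt$, which are part of the left-hand side of \eqref{nullcontrol.ine.carleman} on $(0,T/2)$. This is repaired by the same observation you already invoke for the $L^2$ term: on $[T/4,T/2]$ the factors $s^3\xi^3e^{-4s\hat\alpha}$ and $s\xi e^{-4s\hat\alpha}$ are also bounded below by positive constants, so this piece is likewise majorized by the left-hand side of \eqref{intro.ine.carleman1} and hence by the right-hand side of \eqref{nullcontrol.ine.carleman}. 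The paper avoids the issue by choosing its cutoff $\eta\equiv 1$ on all of $[0,T/2]$ with support in $[0,3T/4]$, so that the cutoff error $\int_{T/2}^{3T/4}\|\varphi\|^2_{L^2}\,dt$ lands in the region where the two weight systems coincide and is absorbed directly.
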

	\begin{proof}
	By construction $\alpha=\beta$ and $\tau=\xi$ in $[0,L]\times(T/2,T)$, so that
	\begin{equation*}
    \begin{array}{ll}
    	\displaystyle\int\limits_{T/2}^T\displaystyle\int\limits_{0}^L
    	[\xi^5|\varphi|^2+\xi^3|\varphi_x|^2+\xi|\varphi_{xx}|^2]
        e^{-4s\hat\alpha}dxdt
   		 =\displaystyle\int\limits_{T/2}^T\displaystyle\int\limits_0^L
    	[\tau^5|\varphi|^2+\tau^3|\varphi_x|^2+\tau|\varphi_{xx}|^2]e^{-4s\hat\beta}dxdt.	
    \end{array}
	\end{equation*}
 	As consequence of Theorem \ref{intro.prop.carleman} we have the estimate
 	\begin{equation*}
    \begin{array}{lll}
        &\displaystyle\int\limits_{T/2}^T\displaystyle\int\limits_0^L
    	[\tau^5|\varphi|^2+\tau^3|\varphi_x|^2+\tau|\varphi_{xx}|^2]e^{-4s\hat\beta}dxdt\\
        &\leq  C\Bigl(\displaystyle\iint\limits_{Q}|g|^2e^{-2s\hat{\alpha}}dxdt+
        \displaystyle\iint\limits_{\omega\times(0,T)}\xi^{9}e^{-6s\breve{\alpha}+2s\hat{\alpha}}
		|\varphi|^2dxdt\Bigr). 
    \end{array}
    \end{equation*} 
	Next, using that $\ell(t)=t(T-t)$ for any $t\in [T/2,T]$ and
	\begin{equation*}
		e^{-2s\hat\beta}\geq C\quad\mbox{and}\quad\tau^{9}e^{-6s\breve{\beta}+2s\hat{\beta}}\geq C\,\,\mbox{in}
		\,\,\, [0,T/2], 
	\end{equation*}
	we readily have
	\begin{equation}\label{carleman2.aux1}
    	\begin{array}{lll}
       	 	&\displaystyle\int\limits_{T/2}^T\displaystyle\int\limits_0^L
    		[\tau^5|\varphi|^2+\tau^3|\varphi_x|^2+\tau|\varphi_{xx}|^2]e^{-2s\hat\beta}dxdt\\
        	&\leq  C\Bigl(\displaystyle\iint\limits_{Q}|g|^2e^{-2s\hat{\beta}}dxdt+
        	\displaystyle\iint\limits_{\omega\times(0,T)}\tau^{9}e^{-6s\breve{\beta}+2s\hat{\beta}}
			|\varphi|^2dxdt\Bigr). 
   	 \end{array}
    \end{equation} 
    
	On the other hand, by considering a function $\eta\in C^1([0,T])$ such that $\eta\equiv 1$ in 
	$[0, T/2]$ and $\eta\equiv 0$ in $[3T/4,T]$, we can prove that $\eta\varphi$ satisfies the system
	\begin{equation}\label{carleman2.aux2}
    \left\{
    \begin{array}{lll}
        -(\eta\varphi)_t-\eta\varphi_{xxx}-\nu(t)\eta\varphi_{xx}-\overline{y}\eta\varphi_x=-\eta g-\eta'\varphi 
        &\mbox{in} &Q,\\
        (\eta\varphi)(0,t)=(\eta\varphi)(L,t)=0  &\mbox{on}& (0,T),\\
        (\eta\varphi)_x(0,t)=(\eta\varphi)_x(L,t)  &\mbox{on}& (0,T),\\
        (\eta\varphi)(\cdot,T)=0 &\mbox{in}& (0,L).        
    \end{array}\right.
	\end{equation}
	Additionally, from classical energy estimates and 
	regularity result with right-hand side in $L^2(Q)$ (see \eqref{ine.regularity.linearized.system}),
	we get
	\begin{equation*}
		\|\varphi(0)\|^2_{L^2(0,L)}+\|\varphi\|^2_{L^2(0,T/2;L^2(0,L))}
		\leq C\Bigl(\|g\|^2_{L^2(0,3T/4;L^2(0,L))}+\|\varphi\|^2_{L^2(T/2,3T/4;L^2((0,L))}\Bigr). 
	\end{equation*}
	Taking into account that $$\tau^5e^{-2s\hat\beta}\geq C>0,\,\,\,\forall t\in[T/2,3T/4]
	\quad\mbox{and}\quad e^{-4s\hat{\beta}}\geq C>0, \,\,\,\forall t\in[0,3T/4],$$
	we have
	\begin{equation}\label{carleman2.aux3}
	    \begin{array}{lll}
	    &\|\varphi(0)\|^2_{L^2(0,L)}
		+\displaystyle\int\limits_{0}^{T/2}\displaystyle\int\limits_0^L[\tau^5|\varphi|^2
		+\tau^3|\varphi_x|^2+\tau|\varphi_{xx}|^2]e^{-4s\hat\beta}dxdt\\
		&\leq C\Biggl(\displaystyle\int\limits_{0}^{3T/4}\displaystyle\int\limits_0^L|g|^2e^{-2s\hat{\beta}}
		dxdt
		+\displaystyle\int\limits_{T/2}^{3T/4}\displaystyle\int\limits_0^L \tau^5e^{-4s\hat\beta}|\varphi|^2dxdt
		\Biggr).
	 \end{array}
	\end{equation}
	Putting together \eqref{carleman2.aux1} and \eqref{carleman2.aux3} we obtain the desired inequality 
	\eqref{nullcontrol.ine.carleman}.\end{proof}
	Now, we can prove the null controllability of system \eqref{controllinear.mainsyst}. The idea is to look a
	solution $y$ in a suitable weight functional space. To this end, we introduce the following space:
	\begin{equation*}
    \begin{aligned}
    	E:=&\{(y,v): e^{s\hat\beta}y\in L^2(Q), 
    	\tau^{-9/2}e^{3s\breve{\beta}-s\hat{\beta}}v1_{\omega}\in L^2(Q),\\
    	& e^{s\hat{\beta}}\tau^{-3/2}y\in C([0,T];L^2(0,L))\cap L^2(0,T;H^1(0,L)),\\
    	&\hspace{0.8cm}e^{2s\hat\beta}\tau^{-5/2}(y_t+y_{xxx}-\nu(t)y_{xx}+\overline{y}y_x+y\overline{y}_x
    	-v1_{\omega})\in L^2(0,T;H^{-1}(0,L))\}.
    \end{aligned}
	\end{equation*}
	 
	\begin{proposition}\label{proof.prop.controllinear} 
	Consider $y_0\in L^2(0,L)$ and $e^{2s\hat{\beta}}\tau^{-5/2}h\in L^2(Q)$. Then,
	there exists a function $v\in L^2(0,T;L^2(\omega))$ such that the associated solution $(y,v)$ to
	\eqref{controllinear.mainsyst} satisfies $(y,v)\in E$.
	
	\noindent Furthermore, there exists a positive constant $C$ such that
	
	\begin{equation}\label{ine.prop.nullcontrol.linearcase}
		\|v\|_{L^2(0,T;L^2(\omega))}\leq C(\|y_0\|_{L^2(0,L)}+\|h\|_{L^2(Q)}).	
	\end{equation}
 
	\end{proposition}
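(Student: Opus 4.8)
The plan is to prove Proposition \ref{proof.prop.controllinear} by the classical duality/penalization argument of Fursikov--Imanuvilov, converting the Carleman inequality \eqref{nullcontrol.ine.carleman} into an observability estimate and then solving a penalized optimal control problem whose minimizer provides the desired control with the prescribed weighted decay. Everything hinges on the fact that the weight functions in the definition of the space $E$ are exactly matched to the weights appearing in the Carleman estimate of Lemma \ref{lema.weight.carleman2}.

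\textbf{Step 1: Penalized functional and its minimizer.} First I would fix a small parameter $\varepsilon>0$ and introduce the functional
\begin{equation*}
J_\varepsilon(y,v):=\frac12\iint_Q e^{2s\hat\beta}\tau^{-5}|y-\text{(weighted data)}|^2\,dxdt
+\frac12\iint_{\omega\times(0,T)}\tau^{-9}e^{6s\breve\beta-2s\hat\beta}|v|^2\,dxdt
+\frac1{2\varepsilon}\|y(\cdot,T)\|^2_{L^2(0,L)},
\end{equation*}
minimized over the pairs $(y,v)$ with $y$ solving \eqref{controllinear.mainsyst}. By strict convexity, coercivity in the relevant weighted norms, and lower semicontinuity, a unique minimizer $(y_\varepsilon,v_\varepsilon)$ exists for each $\varepsilon$. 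The optimality system characterizes $v_\varepsilon$ in terms of an adjoint state $\varphi_\varepsilon$ solving \eqref{intro.adjointsys}, via the relations $v_\varepsilon=-\tau^9 e^{-6s\breve\beta+2s\hat\beta}\varphi_\varepsilon\,1_\omega$ and a matching weighted expression for $y_\varepsilon$ on $Q$.

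\textbf{Step 2: Uniform bounds via the Carleman observability estimate.} The key step is to test the optimality system against $\varphi_\varepsilon$ and integrate by parts, which produces on one side the weighted control and state norms and on the other the duality pairing $\iint_Q h\,\varphi_\varepsilon + (y_0,\varphi_\varepsilon(0))_{L^2(0,L)}$ plus the penalization term. The Carleman inequality \eqref{nullcontrol.ine.carleman}, which crucially includes the $\|\varphi(0)\|^2_{L^2(0,L)}$ term on its left-hand side, lets me absorb the right-hand side by Cauchy--Schwarz: the pairing with $h$ is controlled using $\|e^{2s\hat\beta}\tau^{-5/2}h\|_{L^2(Q)}$ against $\|e^{-2s\hat\beta}\tau^{5/2}\varphi_\varepsilon\|_{L^2(Q)}$, and the initial datum pairing is controlled by $\|y_0\|_{L^2(0,L)}\|\varphi_\varepsilon(0)\|_{L^2(0,L)}$. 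This yields a bound on the weighted norms of $(y_\varepsilon,v_\varepsilon)$ uniform in $\varepsilon$, together with $\|y_\varepsilon(\cdot,T)\|^2_{L^2}\le C\varepsilon$, and in particular the control estimate \eqref{ine.prop.nullcontrol.linearcase}.

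\textbf{Step 3: Passage to the limit.} I would then extract weakly convergent subsequences $v_\varepsilon\rightharpoonup v$ and $y_\varepsilon\rightharpoonup y$ in the appropriate weighted spaces; by weak lower semicontinuity the limit satisfies the same bounds, and $y(\cdot,T)=0$ follows from $\|y_\varepsilon(\cdot,T)\|_{L^2}\to0$. The regularity assertions defining membership in $E$ — namely that $e^{s\hat\beta}y\in L^2(Q)$, that $e^{s\hat\beta}\tau^{-3/2}y$ lies in $C([0,T];L^2)\cap L^2(0,T;H^1)$, and that the weighted equation residual belongs to $L^2(0,T;H^{-1})$ — are obtained by combining these weighted $L^2$ bounds with the smoothing regularity result \eqref{ine.regularity.linearized.system} applied to the shifted/weighted variable $e^{s\hat\beta}\tau^{-3/2}y$, whose source term is controlled precisely by the weighted norms established above. \textbf{The main obstacle} I anticipate is bookkeeping the precise powers of $\tau$ and the exponents $\hat\beta,\breve\beta$ so that every weight produced when differentiating the products $e^{s\hat\beta}\tau^{-3/2}y$ and commuting the weight through the operator is dominated by the weights appearing on the left-hand side of \eqref{nullcontrol.ine.carleman}; this compatibility of weights (exactly the design reason behind the seemingly arbitrary exponents in $E$) is what makes the argument close, and verifying it requires care rather than new ideas.
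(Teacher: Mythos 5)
Your penalization/HUM route is genuinely different from the paper's argument. The paper does not minimize anything: following Fursikov--Imanuvilov, it introduces the bilinear form $a(\hat\varphi,w)=\iint_Q e^{-2s\hat\beta}(L^*\hat\varphi)(L^*w)\,dxdt+\iint_{\omega\times(0,T)}e^{-6s\breve\beta+2s\hat\beta}\tau^9\hat\varphi w\,dxdt$ on smooth test functions, uses Lemma \ref{lema.weight.carleman2} to show $a$ is a scalar product on the completion $P$, solves $a(\hat\varphi,w)=\langle G,w\rangle$ by Lax--Milgram, and reads off the state and control explicitly: $\hat y=e^{-2s\hat\beta}L^*\hat\varphi$, $\hat v=-e^{-6s\breve\beta+2s\hat\beta}\tau^9\hat\varphi 1_\omega$. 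The weighted bounds $e^{s\hat\beta}\hat y\in L^2(Q)$ and $\tau^{-9/2}e^{3s\breve\beta-s\hat\beta}\hat v\in L^2(Q)$ are then immediate from $a(\hat\varphi,\hat\varphi)<\infty$, and only the bootstrap for $e^{s\hat\beta}\tau^{-3/2}\hat y$ remains. Your approach rests on the same Carleman estimate and can in principle reach the same conclusion, but as written it has two concrete problems.

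First, the state weight you put in $J_\varepsilon$ is off by a polynomial factor, and this makes your Step 3 fail. Minimizing with $e^{2s\hat\beta}\tau^{-5}|y|^2$ yields at best $e^{s\hat\beta}\tau^{-5/2}y\in L^2(Q)$, which near $t=T$ is strictly weaker than the requirement $e^{s\hat\beta}y\in L^2(Q)$ in the definition of $E$, since $\tau^{-5/2}=\ell^5(t)\to 0$ as $t\to T$. This is not harmless bookkeeping: in the final regularity step, the equation satisfied by $y^*=e^{s\hat\beta}\tau^{-3/2}y$ contains the commutator term $(e^{s\hat\beta}\tau^{-3/2})_t\,y$, and since $|\hat\beta_t|\le C\tau^{3/2}$ this term is of size $s\,e^{s\hat\beta}|y|$; so you need exactly $e^{s\hat\beta}y\in L^2(Q)$ for the source of the $y^*$-equation to be in $L^2(Q)$ and for Proposition \ref{wp.regularity-linearized-system} (i.e.\ \eqref{ine.regularity.linearized.system}) to apply. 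The repair is to take the state weight $e^{2s\hat\beta}$ with no $\tau$ factor; the Carleman absorption still closes, because the adjoint source becomes $g_\varepsilon=e^{2s\hat\beta}y_\varepsilon$ and $\iint_Q|g_\varepsilon|^2e^{-2s\hat\beta}\,dxdt=\iint_Q e^{2s\hat\beta}|y_\varepsilon|^2\,dxdt$, which is precisely the state term sitting on the good side of your duality identity.

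Second, with any state weight that blows up exponentially at $t=T$, existence of minimizers does not follow from ``strict convexity, coercivity, and lower semicontinuity'': a priori $J_\varepsilon$ could equal $+\infty$ on every admissible pair, and moreover the adjoint source $e^{2s\hat\beta}y_\varepsilon$ need not belong to $L^2(Q)$, so Lemma \ref{lema.weight.carleman2} does not apply verbatim to $\varphi_\varepsilon$. You must either truncate the weights (replace $\hat\beta$ by bounded approximations, derive estimates uniform in the truncation, and pass to the limit) or first prove an unweighted null controllability result to exhibit one admissible pair with finite cost. These are standard but nontrivial additions, and avoiding them is exactly what the paper's Lax--Milgram construction buys; your method, once repaired, buys the interpretation of the control as a limit of penalized optimal controls.
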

	\begin{proof} The proof follows some ideas  
		\cite{guerrero2018local} and therefore we only give a sketch of the proof.  Let us now set
		\begin{equation*} 
		P_0=\{\varphi\in C^3(\overline{Q}): \varphi(0,t)= \varphi(L,t)=0,\,\,  
		\varphi_x(0,t)= \varphi_x(L,t),\,\,\mbox{on}\,\,(0,T)\}	
		\end{equation*}
		as well as the bilinear form
		\begin{equation*}
		a(\hat\varphi,w):=\iint\limits_{Q}e^{-2s\hat\beta}(L^*\hat\varphi)(L^*w)dxdt
		+\iint\limits_{\omega\times(0,T)}e^{-6s\breve{\beta}+2s\hat{\beta}}\tau^9\hat\varphi wdxdt,
		\quad \forall w\in P_0	
		\end{equation*}
 		and the linear form
 		\begin{equation}\label{sect.control.linearform}
 		\langle G,w\rangle:=\iint\limits_{Q}hwdxdt+\int\limits_{0}^L y_{0}(\cdot)w(\cdot,0)dx,	
 		\end{equation}
		where $L^*$ is the adjoint operator of $L$, i.e., 
		$$L^*w=-w_t-w_{xxx}-aw_{xx}-\overline{w}w_x.$$
		Note that Carleman inequality \eqref{nullcontrol.ine.carleman} holds for every $w\in P_0$, so that 
		we have 
		
		\begin{equation*}
			\iint\limits_{Q}\tau^5e^{-4s\hat\beta}|w|^2dxdt\leq Ca(w,w),\quad \forall w\in P_0.	
		\end{equation*}
		In consequence, it is very easy to prove that $a(\cdot,\cdot):P_0\times P_0\to\mathbb{R}$ is a
		symmetric, definite positive bilinear form on $P_0$, so that, by defining $P$ as the completion of 
		$P_0$ for the form induced by $a(\cdot,\cdot)$, it implies that $a(\cdot,\cdot)$ is well--defined,
		continuous and again definite positive on $P$. In addition, from Carleman inequality 
		\eqref{nullcontrol.ine.carleman} and the hypothesis over the function $h$, i.e., 
		$e^{2s\hat\beta}\tau^{-5/2}h\in L^2(Q)$, the linear form $w\rightarrow \langle G, w\rangle$ is 
		well defined and continuous on $P$. Hence, Lax--Milgram's lemma allows us to guarantee the existence
		and uniqueness of $\hat\varphi\in P$ satisfying
		
		\begin{equation}\label{existence.uniqueness.laxmilgram}
			a(\hat\varphi,w)=\langle G, w\rangle\,\,;\,\forall w\in P.	
		\end{equation}
		Let us set
		
		\begin{equation}\label{controlandsolution}\left\{
		\begin{array}{lll}
			\hat{y} &:=e^{-2s\hat\beta}L^*\hat\varphi\quad &\mbox{in}\,\, Q,\\
			\hat{v} &:=-e^{-6s\breve{\beta}+2s\hat{\beta}}\tau^9\hat\varphi\quad &\mbox{in}
			\,\, \omega\times(0,T),
		\end{array}
		\right.
		\end{equation}
		Observe that $\hat{y}$ verifies
		
		\begin{equation}\label{eq.bilinear.aux.for.y.v}
			a(\hat{\varphi},\hat{\varphi})=\iint\limits_{Q}e^{2s\hat{\beta}}|\hat{y}|^2dxdt
			+\iint\limits_{\omega\times(0,T)}e^{6s\breve{\beta}-2s\hat{\beta}}\tau^{-9}|\hat{v}|^2dxdt<+\infty.
		\end{equation}
		
		On the other hand, if $v$ is replaced by $\hat{v}$ in \eqref{controllinear.mainsyst}, we can introduce 
		$\tilde{y}$ as the weak solution
		of \eqref{controllinear.mainsyst}. It implies that $\tilde{y}$ is the unique solution of 
		\eqref{controllinear.mainsyst} with $v=\hat{v}$ defined by transposition (see Definition \ref{def1}). Then
		$\tilde{y}=\hat{y}$ is the weak solution to \eqref{controllinear.mainsyst}.
		
		\noindent Finally, we must verify that $(\hat{y},\hat{v})\in E$. Clarify, from \eqref{eq.bilinear.aux.for.y.v} 
		we know that $e^{s\hat\beta}\hat{y}\in L^2(Q)$ and $\tau^{-9/2}e^{3s\breve{\beta}-s\hat{\beta}}\hat{v}\in L^2(Q)$.
		Moreover, the second hypothesis of Proposition \ref{proof.prop.controllinear} guarantees that 
		$$e^{2s\hat\beta}\tau^{-5/2}(\hat{y}_t+\hat{y}_{xxx}-\nu(t)\hat{y}_{xx}+\overline{y}\hat{y}_x
		+\hat{y}\overline{y}_x-\hat{v})\in L^2(Q).$$
		Thus,  we must just check that $e^{s\hat{\beta}}\tau^{-3/2}\hat{y}\in C([0,T];L^2(0,L))\cap L^2(0,T;H^1(0,L))$. 
		To do this, we define the functions
		$$ y^*:=e^{s\hat\beta}\tau^{-3/2}\hat{y}\quad\mbox{and}\quad h^*:=e^{s\hat\beta}\tau^{-3/2}(h+\hat{v}).$$
		Observe that $y^*$ satisfies the system
		
		\begin{equation*}
    	\left\{
    	\begin{array}{lll}
    	y^*_t+y^*_{xxx}-\nu(t)y^*_{xx}+\overline{y}y^*_x+y^*\overline{y}_x=h^*+(e^{s\hat\beta}\tau^{-3/2})_{t}\hat{y} 
    	&\mbox{in} &Q,\\
        y^*(0,t)=y^*(L,t)=0  &\mbox{on}& (0,T),\\
        y^*_x(0,t)=y^*_x(L,t)  &\mbox{on}& (0,T),\\
        y^*(\cdot,0)=e^{s\hat\beta(0)}\tau^{-3/2}(0)\hat{y}_0(\cdot) &\mbox{in}& (0,L),        
    	\end{array}\right.
    	\end{equation*}
    	Since $e^{s\hat\beta}h\in L^2(Q)$ and $2\hat\beta<3\breve{\beta}$ (see eq. \eqref{carl.def.weights}), we obtain 
    	that $h^*+(e^{s\hat\beta}\tau^{-3/2})_{t}\hat{y}\in L^2(Q)$, in particular in $L^2(0,T;H^{-1}(0,L))$. 
    	Furthermore, for  $\hat{y}_0\in L^2(0,L)$, Proposition \ref{wp.regularity-linearized-system} allows us to have 
    	$y^*\in C([0,T];L^2(0,L))\cap L^2(0,T;H^1(0,L))$.
    	
    	By considering $\hat{v}$ defined in \eqref{controlandsolution},  
    	the bilinear form \eqref{sect.control.linearform} and the identity \eqref{existence.uniqueness.laxmilgram}, we 
    	can deduce \eqref{ine.prop.nullcontrol.linearcase}. This concludes the sketch of the proof of Proposition 
    	\ref{proof.prop.controllinear}.
	\end{proof}
	\section{\normalsize{Local exact controllability to trajectories}}\label{section.local.exact.trajetories}

	\quad
	In this section we give the proof of Theorem \ref{main_theorem2} through fixed point
	arguments. In order to apply the results obtained in the previous sections we consider the following
	change of variable.
	Let us set $y-\overline{y}=:z$ and use this equality in \eqref{intro.control.system}, 
	where $\overline{y}$ solves \eqref{intro.trajectory.system}. It is easy to verify that $z$ satisfies
	\begin{equation}\label{syst.final.section.z}
    \left\{
    \begin{array}{lll}
        z_t+z_{xxx}-\nu(t)z_{xx}+(z\overline{y})_{x}+zz_x=v1_{\omega} 
        &\mbox{in} &Q,\\
        z(0,t)=z(L,t)=0  &\mbox{on}& (0,T),\\
        z_x(0,t)=z_x(L,t)  &\mbox{on}& (0,T),\\
        z(\cdot,0)=y_0-\overline{y}_0 &\mbox{in}& (0,L).        
    \end{array}\right.
	\end{equation}
	observe that this changes reduce our problem to a local null controllability for the solution $z$ of the 
	nonlinear problem \eqref{syst.final.section.z},i.e., we are looking a function control $v$ such that $z$
	solution of \eqref{syst.final.section.z} satisfies 
	\begin{equation}\label{condition.on.z.final}
		z(\cdot,T)=0\quad\mbox{in}\quad (0,L). 
	\end{equation}
	To do this, we will use the following inverse mapping theorem (see \cite{alekseev1987optimal}).
 	\begin{teo}\label{teo.inverse.mapping}
		Suppose that $\mathcal{B}_1,\mathcal{B}_2$ are Banach spaces and 
		$\mathcal{A}:\mathcal{B}_1\to \mathcal{B}_2$ is a continuously differentiable map. We assume that for
		 $b_1^0\in \mathcal{B}_1, b_2^0\in \mathcal{B}_2$ the equality
		\begin{equation}\label{b_1^0}
			\mathcal{A}(b_1^0)=b_2^0
		\end{equation}
		holds and $\mathcal{A}'(b_1^0):\mathcal{B}_1\to \mathcal{B}_2$ is an epimorphism. Then there exists 
		$\delta >0$ such that for any $b_2\in \mathcal{B}_2$ which satisfies the condition 
		$\|b_2^0-b_2\|_{\mathcal{B}_2}<\delta$ there exists a solution $b_1\in \mathcal{B}_1$ of the equation
		\begin{equation*}
			\mathcal{A}(b_1)=b_2.
		\end{equation*}
	\end{teo}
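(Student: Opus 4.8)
The plan is to prove this as a version of the Lyusternik--Graves surjective mapping theorem, using only the open mapping theorem together with the $C^1$ hypothesis; notably, no bounded \emph{linear} right inverse of the derivative is assumed to exist, so the argument must proceed by an iterative (Newton--type) scheme rather than through a single Banach fixed point. First I would set $\Lambda:=\mathcal{A}'(b_1^0)\in\mathcal{L}(\mathcal{B}_1,\mathcal{B}_2)$. Since $\Lambda$ is a continuous linear surjection between Banach spaces, the open mapping theorem guarantees that $\Lambda$ is open; concretely, there is a constant $M>0$ such that for every $w\in\mathcal{B}_2$ one can select $\eta\in\mathcal{B}_1$ with
$$\Lambda\eta=w\qquad\text{and}\qquad \|\eta\|_{\mathcal{B}_1}\le M\|w\|_{\mathcal{B}_2}.$$
This bounded (generally nonlinear) right inverse is the only place where the epimorphism hypothesis on $\mathcal{A}'(b_1^0)$ enters the proof.

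Next I would use the continuous differentiability to control the nonlinear remainder. Because $b_1\mapsto\mathcal{A}'(b_1)$ is continuous, there is $r>0$ with $\|\mathcal{A}'(b_1)-\Lambda\|_{\mathcal{L}(\mathcal{B}_1,\mathcal{B}_2)}\le \tfrac{1}{2M}$ whenever $\|b_1-b_1^0\|_{\mathcal{B}_1}\le r$. Since the closed ball $\overline{B}(b_1^0,r)$ is convex, the mean value inequality applied along segments yields
$$\|\mathcal{A}(b)-\mathcal{A}(b')-\Lambda(b-b')\|_{\mathcal{B}_2}\le \frac{1}{2M}\,\|b-b'\|_{\mathcal{B}_1}$$
for all $b,b'\in\overline{B}(b_1^0,r)$. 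I would then fix $\delta:=r/(4M)$ and, for any $b_2$ with $\|b_2-b_2^0\|_{\mathcal{B}_2}<\delta$, construct a sequence by $b_1^{(0)}:=b_1^0$ and $b_1^{(n+1)}:=b_1^{(n)}+\eta_n$, where $\eta_n$ is a preimage under $\Lambda$ of the residual $w_n:=b_2-\mathcal{A}(b_1^{(n)})$ chosen so that $\|\eta_n\|_{\mathcal{B}_1}\le M\|w_n\|_{\mathcal{B}_2}$.

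The core of the argument is the geometric decay of the residuals. Using $\Lambda\eta_n=w_n$ together with the remainder estimate I would compute
$$w_{n+1}=-\bigl(\mathcal{A}(b_1^{(n+1)})-\mathcal{A}(b_1^{(n)})-\Lambda\eta_n\bigr),$$
whence $\|w_{n+1}\|\le \tfrac{1}{2M}\|\eta_n\|\le\tfrac12\|w_n\|$, so that $\|w_n\|\le 2^{-n}\|b_2-b_2^0\|$ and $\|\eta_n\|\le M2^{-n}\delta$. The summability $\sum_n\|\eta_n\|\le 2M\delta = r/2$ keeps every iterate inside $\overline{B}(b_1^0,r)$, the region where the linearization bound is valid, and simultaneously shows that $(b_1^{(n)})$ is Cauchy. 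Setting $b_1:=\lim_n b_1^{(n)}\in\overline{B}(b_1^0,r)$ and passing to the limit via the continuity of $\mathcal{A}$, the vanishing of $w_n$ forces $\mathcal{A}(b_1)=b_2$, which is the solution sought.

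I expect the only genuine subtlety to be that the right inverse furnished by the open mapping theorem is merely bounded, not linear or Lipschitz; this is precisely why a direct contraction argument applied to a map of the form $u\mapsto S\bigl((b_2-b_2^0)-R(u)\bigr)$, with $S$ a fixed right inverse and $R(u):=\mathcal{A}(b_1^0+u)-b_2^0-\Lambda u$ the remainder, cannot be invoked verbatim. The iterative scheme circumvents this by contracting the residual sequence $(w_n)$ rather than the iterates $(b_1^{(n)})$ themselves, permitting a fresh preimage $\eta_n$ to be chosen at each step; the remaining work is purely quantitative bookkeeping, namely calibrating $\delta$ small relative to $r$ and $M$ so that all iterates remain in the ball on which the $C^1$ remainder estimate holds.
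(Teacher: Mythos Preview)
Your argument is correct: this is the standard Lyusternik--Graves proof, and all the quantitative choices ($r$, $M$, $\delta=r/(4M)$, the induction keeping the iterates in $\overline{B}(b_1^0,r)$, the geometric decay of the residuals) are properly calibrated. One minor point worth making explicit in the induction is that the remainder bound $\|w_{n+1}\|\le\tfrac{1}{2M}\|\eta_n\|$ requires \emph{both} $b_1^{(n)}$ and $b_1^{(n+1)}$ to lie in $\overline{B}(b_1^0,r)$, so the inductive step should first establish $b_1^{(n+1)}\in\overline{B}(b_1^0,r)$ from the hypothesis on $b_1^{(0)},\dots,b_1^{(n)}$ and only then derive the estimate on $w_{n+1}$; your bookkeeping does allow this ordering.

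As for the comparison: the paper does not actually prove this theorem. It is quoted as a known result from Alekseev--Tikhomirov--Fomin, \emph{Optimal Control}, and is used only as a black box to pass from the null controllability of the linearized system (Proposition~\ref{proof.prop.controllinear}, which gives surjectivity of $\mathcal{A}'(0,0)$) to the local result for the nonlinear problem. So there is no ``paper's own proof'' to compare against; your write-up supplies one, and it is the classical one.
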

	
	In our framework, we use the above theorem with the spaces 
	$$\mathcal{B}_1:=E\quad\mbox{and}\quad 
	\mathcal{B}_2:=L^2(e^{2s\hat\beta}\tau^{-5/2}(0,T);L^2(0,L))\times L^2(0,L)$$ 
	and the operator $\mathcal{A}:\mathcal{B}_1\rightarrow\mathcal{B}_2$ defined by
	$\mathcal{A}(z,v):=(z_t+z_{xxx}-\nu(t)z_{xx}+(z\overline{y})_{x}+zz_x-v1_{\omega},z(0))$,
	for all $(z,v)\in E$.
	
	\noindent In order to apply Theorem \ref{teo.inverse.mapping}, it is necessary to prove that $\mathcal{A}$
	is of class $C^1(\mathcal{B}_1,\mathcal{B}_2)$. We start by assuming that 
	$\overline{y}\in C([0,T];L^2(0,L))\cap L^2(0,T;H^1(0,L))$. Observe that all terms in the definition of $\mathcal{A}$
	are linear (and consequently $C^1$), except for $zz_{x}$. Thus, we will prove that the bilinear operator
	$((z^1,v^1),(z^2,v^2))\rightarrow \frac{1}{2}(z^1z^2)_x$ is continuous from $E\times E$ to 
	$L^2(e^{2s\hat\beta}\tau^{-5/2}(0,T);L^2(0,L))$. In fact, notice that 
	$$e^{s\hat\beta}\tau^{-3/2}z\in C([0,T];L^2(0,L))\cap L^2(0,T;H^1(0,L)), \quad\quad  \forall (z,v)\in E.$$ 
	
	\noindent Then, we have
	\begin{equation*}
	    \begin{array}{lll}
	    \|e^{2s\hat\beta}\tau^{-5/2}(z^1z^2)_x\|_{L^2(Q)}&\leq &C\displaystyle\int\limits_{0}^Te^{2s\hat\beta}\tau^{-3}\|
	    z^1(\cdot,t)\|^2_{L^\infty(0,L)}e^{2s\hat\beta}\tau^{-3}\|z^2(\cdot,t)\|^2_{H^1(0,L)}\\
	    & &\hspace{1cm} +e^{2s\hat\beta}\tau^{-3}\|z^2(\cdot,t)\|^2_{L^\infty(0,L)}e^{2s\hat\beta}\tau^{-3}\|z^1(\cdot,t)\|
	    ^2_{H^1(0,L)} dt\\
	    &\leq &C\|z^1\|_{\mathcal{B}_1}\|z^2\|_{\mathcal{B}_1}.
		\end{array}
	\end{equation*}
	Now, observe that $\mathcal{A}'(0,0):\mathcal{B}_1\rightarrow\mathcal{B}_2$ is given by 
	$$\mathcal{A}'(0,0)(z,v)=(z_t+z_{xxx}-az_{xx}+(z\overline{y})_{x}-v1_{\omega},z(0))),\quad \forall 
	(z,v)\in \mathcal{B}_1.$$
	However,  the null controllability result proved in Proposition 
	\ref{proof.prop.controllinear} allows to deduce that the previous  functional is surjective.
	
	Therefore, an application of Theorem \ref{teo.inverse.mapping} gives the existence of a positive
	number $\delta$ such that, if $\|z(0)\|_{L^2(0,L)}\leq\delta$, we can find a control $v$ and an associated
	solution $z$ to \eqref{syst.final.section.z} satisfying \eqref{condition.on.z.final}.
	This finishes the proof of Theorem  \ref{main_theorem2}.


\section{Acknowledgments}
This work has been partially supported by Fondecyt grant 1180528 (Eduardo Cerpa), Fondecyt grant 3180100 (Cristhian Montoya), Conicyt--Basal Project FB0008 AC3E (Eduardo Cerpa), Simons Foundation 201615 and NSFC 11571244 (Bingyu Zhang).





\end{document}